\numberwithin{equation}{section}
\theoremstyle{plain}
\newtheorem{theorem}[equation]{Theorem}   
\newtheorem{lemma}[equation]{Lemma} 
\newtheorem{proposition}[equation]{Proposition} 
\newtheorem{corollary}[equation]{Corollary} 
\newtheorem{problem}[equation]{Problem}
\theoremstyle{definition}
\newtheorem{definition}[equation]{Definition} 
\newtheorem{remark}[equation]{Remark} 
\newtheorem{remarks}[equation]{Remarks}
\newtheorem{example}[equation]{Example}
\newtheorem{convention}[equation]{Convention}
\DeclareMathOperator{\HH}{H}
\DeclareMathOperator{\RH}{\widetilde{H}}
\DeclareMathOperator{\id}{id}  
\DeclareMathOperator{\coker}{Coker}
\DeclareMathOperator{\rank}{rank}
\DeclareMathOperator{\im}{Im}
\DeclareMathOperator{\Ker}{Ker} 
\DeclareMathOperator{\lcm}{lcm} 
\DeclareMathOperator{\comp}{{\bf Comp}}
\begin{document}   

\renewcommand{\:}{\! :} 
\newcommand{\ds}{\displaystyle}
\newcommand{\cx}{\mathfrak}
\newcommand{\mc}{\mathcal}
\newcommand{\p}{\mathfrak p} 
\newcommand{\m}{\mathfrak m}
\newcommand{\e}{\epsilon}
\newcommand{\lra}{\longrightarrow} 
\newcommand{\lla}{\longleftarrow}
\newcommand{\ra}{\rightarrow} 
\newcommand{\altref}[1]{{\upshape(\ref{#1})}} 
\newcommand{\bfa}{\boldsymbol{\alpha}} 
\newcommand{\bfb}{\boldsymbol{\beta}} 
\newcommand{\bfg}{\boldsymbol{\gamma}} 
\newcommand{\bfM}{\mathbf M} 
\newcommand{\bfI}{\mathbf I} 
\newcommand{\bfC}{\mathbf C} 
\newcommand{\bfB}{\mathbf B} 
\newcommand{\bsfC}{\bold{\mathsf C}} 
\newcommand{\bsfT}{\bold{\mathsf T}}
\newcommand{\smsm}{\smallsetminus} 
\newcommand{\ol}{\overline} 
\newcommand{\hm}{\hphantom{-}}

\newlength{\wdtha}
\newlength{\wdthb}
\newlength{\wdthc}
\newlength{\wdthd}
\newcommand{\elabel}[1]
           {\label{#1}  
            \setlength{\wdtha}{.4\marginparwidth}
            \settowidth{\wdthb}{\tt\small{#1}} 
            \addtolength{\wdthb}{\wdtha}
            \raisebox{\baselineskip}
            {\color{red} 
             \hspace*{-\wdthb}\tt\small{#1}\hspace{\wdtha}}}  

\newcommand{\mlabel}[1] 
           {\label{#1} 
            \setlength{\wdtha}{\textwidth}
            \setlength{\wdthb}{\wdtha} 
            \addtolength{\wdthb}{\marginparsep} 
            \addtolength{\wdthb}{\marginparwidth}
            \setlength{\wdthc}{\marginparwidth}
            \setlength{\wdthd}{\marginparsep}
            \addtolength{\wdtha}{2\wdthc}
            \addtolength{\wdtha}{2\marginparsep} 
            \setlength{\marginparwidth}{\wdtha}
            \setlength{\marginparsep}{-\wdthb} 
            \setlength{\wdtha}{\wdthc} 
            \addtolength{\wdtha}{1.4ex} 
            \settowidth{\wdthb}{\tt\small{#1}} 
            \marginpar{\vspace*{\baselineskip}
            \smash{\raisebox{0.7\baselineskip}{\tt\small{#1}}
            \hspace{-\wdthb}%
            \raisebox{.3\baselineskip}
            {\rule{\wdtha}{0.5pt}}} }
            \setlength{\marginparwidth}{\wdthc} 
            \setlength{\marginparsep}{\wdthd}  
            }  
            
\renewcommand{\mlabel}{\label}
\renewcommand{\elabel}{\label}


\title[Dynamical systems on chain complexes]
{Dynamical systems on chain complexes \\    
and canonical minimal resolutions}
\author[A. Tchernev]{Alexandre Tchernev} 
\address{Department of Mathematics\\
         University at Albany, SUNY\\ 
         Albany, NY 12222}
\email{atchernev@albany.edu}
\keywords{dynamical systems, minimal resolution, monomial ideal, toric ring} 
\subjclass[2010]{Primary: 18G35, 13D02; Secondary: 14M25, 05E40, 55U15} 

\begin{abstract} 
We introduce  
notions of \emph{vector field} and its (discrete time) \emph{flow} on 
a chain complex. The resulting dynamical 
systems theory provides a set of tools with a broad range of 
applicability that allow, among others, to replace in a canonical 
way a chain complex with a 
``smaller'' one of the same homotopy type. As applications 
we construct in an explicit, canonical, and symmetry-preserving  
fashion a minimal free resolution for every toric ring and every 
monomial ideal.
Our constructions work in all characteristics and over any base field. 
A key subtle new point is that   
in certain finitely many positive characteristics (which depend on 
the object that is being resolved) a  
transcendental extension of the base field 
is produced before a resolution  
is obtained, while in all other characteristics 
the base field is kept unchanged. In the monomial case we show that 
such a transcendental base field extension \emph{cannot} in general 
be avoided, and we conjecture that the same holds in the toric case.  
\end{abstract} 

\maketitle

\section*{Introduction}

A standard exercise in differential geometry shows that a vector 
field on a smooth manifold induces naturally a chain homotopy $V$ on 
the de Rham complex of the manifold such that  $V^2=0$. In his 
work on discretizing Morse functions and vector fields on manifolds
\cite{Fo1, Fo2}, Forman arrives at the notions of  
\emph{combinatorial vector field} and its (discrete time) \emph{flow}
on the cellular chain 
complex of a regular CW-complex, and this combinatorial vector field 
is again a chain homotopy $V$ such that $V^2=0$. Chain homotopies with 
this property appear prominently also in homological perturbation 
theory, see~\cite{B, G, GL, L} and references there, and
implicitly in other 
related fields such as algebraic discrete Morse
theory~\cite{BW,JW,Sk,Sk2}. 

In this paper we 
present (a condensed version of) a general theory of dynamical
systems on chain complexes which in some sense mirrors the classical
theory of dynamical systems on compact manifolds, and which complements 
and incorporates as special instances the previously discussed notions.
Following Forman's lead, 
we call a chain homotopy $V$ on a chain complex $\cx F$ 
a \emph{vector field} on $\cx F$ whenever $V^2=0$. 
In analogy to the manifolds case, such a vector field induces a 
(discrete time) \emph{flow} $\Phi_V$ on $\cx F$, 
which is an endomorphism of $\cx F$ 
and is chain homotopic to the identity.
Iterating this flow produces a well-behaved dynamical 
system on $\cx F$ that preserves the homotopy type.  
To analyze its asymptotic behaviour, we employ a  
structure we call a $P$-\emph{grading} or $P$-\emph{stratification} 
on $\cx F$, the presence of which
is analogous to having a handlebody decomposition on a manifold, and
which is ubiquitous in applications. Such a stratification induces 
naturally a family of subquotients of $\cx F$ that we call the 
\emph{strata}. We use them to  
introduce the notion of \emph{Lyapunov structure} for a 
vector field $V$ on $\cx F$, which controls the asymptotic 
behaviour of the flow $\Phi_V$, and mirrors the way a 
Lyapunov function controls the asymptotic behaviour 
of a flow on a compact manifold, see e.g. \cite{C, Fr}. 
A key feature,  for which we have not yet 
discovered a good analogue in the classical dynamical systems on 
manifolds case, is that a $P$-stratification 
provides a set of tools for \emph{canonical} 
creation of vector fields on $\cx F$ with desired asymptotic flow 
behaviour.    

While a more detailed analysis of the resulting dynamical systems theory 
is an ongoing research project, even the basic facts 
presented here already have a broad range of 
applications. Leaving uses in topology and in 
representation theory for later publications, we focus on two 
long-standing problems from algebraic geometry, commutative algebra 
and combinatorics --- the construction in a canonical, explicit, 
and \emph{intrinsic} (i.e. symmetry-preserving) manner of minimal free 
resolutions for toric rings and for monomial ideals. 

The monomial ideals case has been a central open problem in commutative 
algebra since the thesis 
of Taylor~\cite{T} in 1966, and remains a very active area of research, see 
e.g.~\cite{MS, P, OW} and the references there.
The ultimate goal is a construction that is canonical, explicit, 
preserves the symmetries of the ideal, and works for all ideals and in 
all characteristics.  
A solution that works for all ideals 
in characteristic zero goes back at least to the 
main result of Yuzvinsky~\cite[Theorem 4.3]{Y}, and is obtained by 
using in it the canonical splittings given via  
Moore-Penrose pseudo-inverses, see Remark~\ref{R:pseudo-inverses}. 
In the 
recent paper~\cite{EMO}, Eagon, Miller, and Ordog use their amazing 
combinatorial description of Moore-Penrose pseudo-inverses    
to give another solution in characteristic zero with a strong         
combinatorial component, and this solution works also 
for every ideal in  
all but finitely many positive characteristics. 

To understand why a solution that works in 
all characteristics has remained elusive so far, we point out a common 
feature of every construction of a monomial resolution 
that we are aware of, whether minimal or not:  a base field is 
chosen at the start and is then kept fixed throughout regardless of the 
properties of the ideal that is being resolved. Perhaps unexpectedly, 
it turns out that such an approach \emph{cannot} succeed in positive 
characteristic if one aims for a  minimal free resolution 
that respects the symmetries of the monomial ideal. A key main result 
of this paper is Theorem~\ref{T:counterexample} where we provide for 
each prime $p\ge 2$ a monomial ideal $I(p)$ that does \emph{not} have 
a symmetry-preserving minimal free resolution for any base field  
algebraic over $\mathbb F_p$. 

The solutions we present in both the monomial and  the toric case employ 
a general strategy common to a number of  
constructions from representation theory, algebraic geometry, and 
commutative algebra, among them \cite{Y}, \cite{EMO}, and all constructions 
that use algebraic Morse theory.
In our dynamical systems terminology it is described as follows:
\begin{itemize}
\item   
start from a canonical 
resolution $\cx F$ with a suitable natural $P$-stratification; 

\item
then     
construct a splitting of each stratum; 

\item 
this induces in a natural way 
a vector field on $\cx F$ that has a flow stabilizing after finitely 
many iterations; 

\item
the stable iterate is then a projection of 
$\cx F$ onto a minimal resolution $\cx M$.
\end{itemize} 
Since each stratum is 
usually canonically obtained from an underlying finite chain complex of 
finite-dimensional vector spaces over the base field, 
with each vector space 
often given a canonical basis, this strategy reduces 
(completely formally, given our dynamical systems machinery) 
the minimal resolution problem to the problem of canonically 
constructing a splitting of such a chain complex of based vector spaces.   
We feature several ways of producing such a canonical splitting. 
   
Our construction in the monomial ideals case,  
Theorem~\ref{T:monomial-intrinsic}, obtains   
the explicit, canonical, and intrinsic minimal free resolution  
directly from the lcm-lattice~\cite{GPW} of the monomial ideal. 
It works for all monomial ideals
in all characteristics, and keeps the base field unchanged except 
for certain finitely many 
positive characteristics (which depend on the ideal), where it  
produces the desired result only after  
a transcendental extension of the base field. 
The starting canonical free resolution here is the \emph{lcm-resolution}, a 
non-minimal resolution supported on the order complex of the lcm-lattice. 
On each stratum there is a natural finite family of what we call 
\emph{matroidal} splittings, induced by the canonical bases of the vector 
spaces underlying the stratum. When the characteristic does not divide 
the number of matroidal splittings we obtain a canonical splitting of the stratum from  a simple average of all matroidal splittings. 
When the characteristic divides the number of matroidal splittings, we 
obtain a canonical splitting of the stratum only after a transcendental extension of the base field; here we form   
a weighted average of all matroidal splittings with generic weights. 

The problem for toric rings, while closely related to the problem 
for monomial ideals, has been open in all characteristics. 
Canonical, but non-minimal resolutions were constructed in 
\cite{BS} and \cite{TV1}. 
Our solution, Theorem~\ref{T:toric-intrinsic}, 
works in all characteristics and starts with 
the resolution from \cite{TV1}.  
To obtain a canonical splitting of each stratum  
in characteristic zero we use the Moore-Penrose splitting,  
and in positive characteristic we choose the  
weighted average with generic weights of \emph{all} 
splittings of the canonical finite chain complex 
that underlies the stratum. Because of this choice, in every positive 
characteristic we obtain a resolution only after a transcendental base 
field extension. Of course, see   
Remark~\ref{R:monomial-remarks}(g), 
this argument can easily be changed 
in a manner analogous to the monomial ideals case
so that the base field is kept unchanged except 
in finitely many positive characteristics. 

The paper is organized as follows.
Sections~\ref{S:vector-fields}-\ref{S:asymptotic-properties} 
contain a quick introduction to the general theory, with just 
the bare essentials needed for our applications. 
In Section~\ref{S:vector-fields} we introduce the notions of vector 
field on a chain complex, and the flow of a vector field. We also 
introduce the notions of $P$-grading/stratification, stratum, 
Lyapunov structure, and chain-recurrent chain complex. A key 
point is Proposition~\ref{P:stratification-induced}(c) which  
shows that, given a vector 
field on each stratum of a stratified complex $\cx F$, one obtains 
in an explicit canonical way a vector field on $\cx F$. 
In Section~\ref{S:vector-fields-and-splittings} we review
from our new perspective a well-known 
and well-studied class of vector fields, the class of all splittings.   
As their flows are already stable, they serve as an important building 
block to create vector fields with well controlled asymptotic behaviour. 
We give examples, and elaborate on how to obtain a new splitting  
from a weighted average of a finite set of splittings. 
In Section~\ref{S:asymptotic-properties} we show, see
Theorem~\ref{T:asymptotic}, how a Lyapunov 
structure can be used to control the asymptotic behaviour of 
the flow of a vector field. The proof we give is an elaborate and 
explicit generalization of an argument of Yuzvinsky 
from~\cite[Lemma~4.1]{Y}, but parts of our theorem can be obtained 
also by using the proof of the basic perturbation lemma from 
homological perturbation theory \cite{B,G}. 

In Section~\ref{S:toric-rings} we give our construction,
Theorem~\ref{T:toric-intrinsic}, 
of the explicit, canonical, and intrinsic minimal free 
resolution of a toric ring. 
We discuss some of its properties, and compute a simple example. 

In Section~\ref{S:monomial-resolutions} we present,
see Theorem~\ref{T:monomial-intrinsic}, our 
construction of an explicit,  
canonical, and intrinsic minimal free resolution of a monomial ideal.    
We state in Problem~\ref{P:mfr} a much weaker than normally considered 
version of this construction problem, which reflects the necessity 
of passing to a base field extension before a solution can be obtained
in some positive characteristics. 
Using our dynamical systems approach we give a solution that  
has good functorial properties and works in all characteristics. 
Section~\ref{S:an-example} is devoted to the computation of a 
useful example of this monomial construction. 

Finally, in
Section~\ref{S:monomial-resolutions-in-positive-characteristic}
we show that, see Theorem~\ref{T:counterexample}, 
in positive characteristic it is not, in general, possible to construct 
a minimal resolution of a monomial ideal by using only intrinsic 
properties of the ideal without first taking a transcendental 
extension of the base field. 

The author would like to thank Lucho Avramov, Hank Kurland, and 
especially Marco Varisco for very useful conversations on 
various aspects of the theory presented here.

\section{Vector fields}\label{S:vector-fields}

Throughout this paper rings are associative with unit, modules 
are left and unitary, and unadorned tensor products are 
over $\mathbb Z$. 
While all definitions and 
results in the first three sections will be formulated in 
the setting of chain complexes of modules over some base ring, 
it should be clear to the reader that 
they are valid also for chain complexes 
over any co-complete abelian category. 

Let $\cx F=(F_n,\phi_n)$ be a chain complex 
of modules over a ring.
We will write $B_n(\cx F)$, \  
$Z_n(\cx F)$, and $\HH_n(\cx F)$ 
for the $n$-boundaries, $n$-cycles, and $n$th homology,   
respectively, of $\cx F$.  
The \emph{homology chain complex of\/ $\cx F$} is 
the chain complex $\HH(\cx F)$ with zero 
differential and component $\HH_n(\cx F)$ in 
homological degree $n$.

Recall that  
if $D$ is a chain homotopy on $\cx F$ then the morphism 
\[
\Phi_D=\id_{\cx F} - \phi D - D \phi
\] 
is called the 
\emph{deformation of $\cx F$ along $D$}. 
Clearly $\Phi_D$ is an endomorphism of $\cx F$ and 
is chain homotopic to the identity. 

\begin{definition}
A \emph{pre-vector field} on $\cx F$ is a  
chain homotopy $D$ on $\cx F$ such that 
\[
D\Phi_D=\Phi_D D.
\] 
A \emph{vector field} on $\cx F$ is a chain homotopy $V$ 
such that $V^2=0$. 
If this is the case, we call the deformation 
$\Phi_V$ the \emph{flow} of the vector field $V$. 
\end{definition}

\begin{example}
Recall that a \emph{contraction} of $\cx F$ is a 
chain homotopy $D$ such that $\Phi_D=0$. Thus  
every contraction is a pre-vector field on $\cx F$. 
Furthermore, a routine computation shows that 
if $D$ is a contraction, then the 
homotopy $D\phi D$ is again a contraction and 
also a vector field on $\cx F$. 
\end{example}

The following proposition is a straighforward consequence
of the definitions.

\begin{proposition} 
Let $D$ be a chain homotopy on $\cx F$.  

(a) $D$ is a pre-vector field on $\cx F$ if and only if 
$D^2\phi = \phi D^2$. In particular, 
a vector field on $\cx F$ is also a pre-vector field. 

(b) If $D$ is a pre-vector field on $\cx F$, then it is 
also a pre-vector field on both $\Ker\Phi_D$ and $\im\Phi_D$, 
and a contraction on $\Ker\Phi_D$. 
In particular, $\im\Phi_D$ is chain homotopy 
equivalent to $\cx F$.    

(c) If $D$ is a pre-vector field on $\cx F$, then the 
chain homotopy $V=D\phi D$ is a vector field on $\cx F$.  
Furthermore, $V$ is a vector field 
on both $\Ker\Phi_D$ and $\im\Phi_D$, and a contraction 
on $\Ker\Phi_D$.   
\end{proposition}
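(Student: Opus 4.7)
The plan is to extract part (a) from the single identity
\[
D\Phi_D - \Phi_D D \;=\; \phi D^2 - D^2 \phi,
\]
which is a direct expansion of $\Phi_D = \id - \phi D - D\phi$ using the chain homotopy relation $\phi D + D\phi = \id - \Phi_D$ only in the form of definitions. The pre-vector field condition is the vanishing of the left side, hence equivalent to the vanishing of the right side; and since $V^2 = 0$ immediately gives $V^2\phi = 0 = \phi V^2$, every vector field is a pre-vector field.

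For part (b), I would first observe that $\Phi_D$ is a chain map, so both $\Ker \Phi_D$ and $\im \Phi_D$ are subcomplexes of $\cx F$, and the identity $D\Phi_D = \Phi_D D$ shows that $D$ carries each of these subcomplexes into itself. The restriction $D' = D|_{\Ker \Phi_D}$ then has deformation $\Phi_{D'} = \id - \phi D' - D'\phi = \Phi_D|_{\Ker \Phi_D} = 0$, so $D$ is a contraction, and therefore in particular a pre-vector field, on $\Ker \Phi_D$. On $\im \Phi_D$ the restriction inherits $D\Phi_D = \Phi_D D$ and hence is a pre-vector field. For the chain homotopy equivalence I would use the inclusion $\iota \colon \im\Phi_D \hookrightarrow \cx F$ together with the surjection $\pi \colon \cx F \to \im\Phi_D$ defined by $\pi(x) = \Phi_D(x)$: the composite $\iota\pi = \Phi_D$ is homotopic to $\id_{\cx F}$ via $D$ by definition, and the same computation that handled $\Ker\Phi_D$ shows $\pi\iota = \Phi_D|_{\im\Phi_D}$ equals the deformation of $D|_{\im\Phi_D}$, hence is homotopic to $\id_{\im\Phi_D}$ via $D|_{\im\Phi_D}$.

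For part (c), the vector field identity falls out of
\[
V^2 = D\phi D \cdot D\phi D = D\phi\,(D^2\phi)\,D = D\phi\,(\phi D^2)\,D = D\phi^2 D^3 = 0,
\]
using $D^2\phi = \phi D^2$ from (a) together with $\phi^2 = 0$; because $D$ and $\phi$ each preserve $\Ker\Phi_D$ and $\im\Phi_D$, so does $V = D\phi D$, and the same computation shows $V^2 = 0$ on each subcomplex. The step I expect to be the only nonroutine one is verifying that $V$ is a contraction on $\Ker\Phi_D$: setting $A = \phi D$ and $B = D\phi$, on $\Ker\Phi_D$ we have $A + B = \id$ (from the contraction conclusion above), while $BA = D\phi^2 D = 0$ holds everywhere, so
\[
\id = (A+B)^2 = A^2 + AB + BA + B^2 = A^2 + AB + B^2.
\]
Combined with $\phi V = A^2$ and $V\phi = B^2$, this gives $\Phi_V = \id - A^2 - B^2 = AB = \phi D^2 \phi = \phi^2 D^2 = 0$ on $\Ker\Phi_D$, which is the desired contraction identity and completes the proof.
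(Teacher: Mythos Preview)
Your proof is correct in all three parts; the identity $D\Phi_D-\Phi_D D=\phi D^2-D^2\phi$ handles (a), the commutation $D\Phi_D=\Phi_D D$ gives the invariance of $\Ker\Phi_D$ and $\im\Phi_D$ needed for (b), and your $A,B$ computation for the contraction claim in (c) is clean and valid (note that $AB=\phi D^2\phi=\phi(\phi D^2)=0$ uses part (a) globally, not just on $\Ker\Phi_D$, which is fine). The paper itself omits the proof entirely, calling the proposition ``a straightforward consequence of the definitions,'' so your write-up is simply a correct elaboration of what was left to the reader.
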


Thus, iterating the flow of a vector field on a chain complex
produces a well-behaved dynamical system that preserves homotopy type. The
structure described below emerges as a key tool in controling
the asymptotic behaviour of such a dynamical system. 
It can be thought of as an analogue in the setting of chain 
complexes to a handlebody decomposition 
of a compact smooth manifold.

\begin{definition}
Let $P$ be a poset. 
A \emph{$P$-grading} or \emph{$P$-stratification}
on the  complex $\cx F$ is a decomposition  
\[
F_n=\bigoplus_{a\in P} F_n^a 
\] 
for every $n$ 
such that for each $a\in P$ the 
collection $\cx F(a)=(F_n(a), \phi_n)$ 
is a subcomplex of $\cx F$, where 
\[
F_n(a)=\bigoplus_{x\le a}F_n^x. 
\]
The summand $F_n^a$ is called the 
\emph{degree $a$ homogeneous component} 
of $F_n$. The chain complex
\[
\ol{\cx F(a)} = \cx F(a)\Big/\sum_{b<a}\cx F(b)
\]
is called the \emph{associated graded strand} at $a$ or the
\emph{stratum} at $a$. Observe that we have
$\ol{\cx F(a)}_n = F_n^a$. 
\end{definition}

$P$-gradings are ubiquitous in algebraic geometry,  
commutative algebra and algebraic topology. Below we provide several
relevant examples. For more details, examples, and  
applications see e.g. \cite[Section 2]{CT} and the references there.

\begin{example}
(a)   
Let $P=pt$, and set  
$F_n^{pt}=F_n$. This produces the \emph{trivial} 
grading/stratification on $\cx F$.

(b)
Let $P=\mathbb Z$ and set $F_n^n = F_n$ and $F_n^a = 0$ if $a\ne n$.
This produces the $\mathbb Z$-grading/stratification 
by \emph{homological degree}. 
\end{example}

\begin{example} 
Let $R$ be a $\mathbb Z$-graded ring, and let $\cx F$ be a complex 
of $\mathbb Z$-graded free $R$-modules, with degree-preserving 
differentials. For each $n$ let $B_n$ be a homogeneous basis of $F_n$, 
and for each $a$ let $B_n^a$ be the subset of all elements $b\in B_n$ of 
degree $|b|=a$. Writing $F_n^a$ for the free submodule of $F_n$ with 
basis the elements from $B_n^a$, we obtain a $\mathbb Z$-stratification 
of $\cx F$. Note that while this stratification depends on the 
chosen bases $B_n$, the complexes $\cx F(a)$ and the strata $\ol{\cx F(a)}$
do not depend on this choice.   
\end{example}

\begin{example}\label{Ex:double-complex}
Let $C_{\bullet\bullet}$ be a double complex and let 
$\cx F=Tot^\oplus(C_{\bullet\bullet})$ be the associated total chain complex, 
see e.g.~\cite[Section~1.2]{W}. Thus $F_n=\bigoplus_{i+j=n}C_{ij}$ and the 
two canonical filtrations of $\cx F$ induce two canonical 
$\mathbb Z$-stratifications on $\cx F$. For example, setting 
$F_n^a=C_{a,n-a}$ gives the \emph{stratification by columns} of $\cx F$, 
and the stratum at $a$ is $\ol{\cx F(a)}=C_{a,\bullet}$ with  
differential the $a$th vertical differential of $C_{\bullet\bullet}$. 
\end{example}

\begin{example} 
Let $X$ be a regular CW-complex, and let $\Bbbk$ be a 
commutative ring. Let $C_\bullet(X,\Bbbk)$ be 
the cellular chain complex of $X$ with coefficients in $\Bbbk$. 
Recall that the set $A_n$ of 
the $n$-cells $\sigma$ of $X$ (with closure $\bar\sigma$ and boundary 
$\dot\sigma$) index a decomposition of 
\[
F_n = \HH_n(X^n,X^{n-1},\Bbbk) = 
\bigoplus_{\sigma\in A_n}\HH_n(\bar\sigma,\dot\sigma, \Bbbk).  
\]
The face poset $P(X)=\bigcup_{n\ge 0}A_n$ of $X$ has partial order given 
by $\sigma\le \tau$ if and only if $\sigma\subseteq\bar\tau$. 
Thus we obtain the standard $P(X)$-stratification of $C_\bullet(X,\Bbbk)$
where 
$F_n^\sigma = \HH_n(\bar\sigma, \dot\sigma ,\Bbbk)$ if $\sigma\in A_n$ 
and $F_n^\sigma=0$ otherwise. 
\end{example}

\begin{example}[Taylor resolution]\label{Ex:Taylor-1} 
Let $U$ be a finite set, 
let $\Delta^U$ be the geometric realization of the 
full simplex on the set of vertices $U$. Thus $\Delta^U$ is 
a regular CW-complex and the 
$n$-cells $[\sigma]$ of $\Delta^U$ are indexed by the 
elements $\sigma$ of the set $A_n$ of $n+1$-subsets of 
$U$. Let $X$ be a set of variables and let 
$M(X)=\{f \mid f\: X\lra \mathbb N\}$ be the free abelian 
monoid on the set $X$ with operation value-wise addition. 
Let $\Bbbk$ be a commutative ring. We 
identify each element $f\in M(X)$ with the monomial 
$\prod_{y\in X}y^{f(y)}$ in the polynomial ring $\Bbbk[X]$ over 
$\Bbbk$ on the set of variables $X$. For each $v\in U$ let 
$m_v$ be a monomial in $\Bbbk[X]$ such that $m_u$ does not 
divide $m_v$ when $u\ne v$. More generally, with every subset 
$\sigma$ of $U$ we associate the monomial 
$m_\sigma=\lcm\{m_v\mid v\in\sigma\}$. The set of monomials 
$L=\{ m_\sigma \mid \sigma\subseteq U\}$, partially ordered 
by divisibility, is in fact a lattice called the 
\emph{lcm-lattice} of the ideal $I$ generated in $\Bbbk [X]$ 
by the set of monomials $\{m_u\mid u\in U\}$, see \cite{GPW}.  
Let $d_n^{\sigma\tau}$ be
the component of the differential of the cellular 
chain complex $C_\bullet(\Delta^U, \mathbb Z)$ that sends 
$\HH_n\bigl(\ol{[\sigma]},\dot{[\sigma]}, \mathbb Z\bigr)$ to 
$\HH_{n-1}\bigl(\ol{[\tau]},\dot{[\tau]}, \mathbb Z\bigr)$. 
The \emph{Taylor resolution} \cite{T} of the ideal $I$ 
is the chain complex $\cx T=\cx T(I)=(T_n,\delta_n)$ given by 
$
T_n=
\bigoplus_{\sigma\in A_n}
\HH_n\bigl(\ol{[\sigma]}, \dot{[\sigma]},\mathbb Z\bigr)\otimes\Bbbk[X]
$ 
with differential $\delta_n$ given by 
\[
\delta_n\Big\vert_{\HH_n\bigl(\ol{[\sigma]}, \dot{[\sigma]},\mathbb Z\bigr)\otimes\Bbbk[X]} 
= 
\sum_{\tau\in A_{n-1}}d_n^{\sigma\tau}\otimes (m_\sigma/m_\tau). 
\] 
Now by setting 
$
T_n^a = 
\bigoplus_{m_\sigma=a}
\HH_n\bigl(\ol{[\sigma]}, \dot{[\sigma]},\mathbb Z\bigr)\otimes\Bbbk[X] 
$
for every $a\in L$ we obtain an $L$-stratification on $\cx T$. 
For $a\in L$ let $\Delta_{\le a}$ (respectively, $\Delta_{<a}$) 
be the union of all cells $[\sigma]$ of $\Delta^U$ such that 
$m_\sigma\le a$ (respectively, $m_\sigma<a$). It is straightforward 
to check $\Delta_{\le a}$ and $\Delta_{<a}$ are regular CW-subcomplexes 
of $\Delta^U$, and that the stratum $\ol{\cx T(a)}$ is 
precisely 
\[
\ol{\cx T(a)} = 
C_\bullet(\Delta_{\le a},\Delta_{<a},\mathbb Z)\otimes\Bbbk[X],
\] 
where 
$
C_\bullet(\Delta_{\le a},\Delta_{<a},\mathbb Z)=
C_\bullet(\Delta_{\le a},\mathbb Z)/ 
C_\bullet(\Delta_{<a},\mathbb Z)
$
is the cellular chain complex of the pair $(\Delta_{\le a},\Delta_{<a})$.  
\end{example}

\begin{definition}
Let $P$ be a poset.
  
(a)
Let $V$ be a vector field on $\cx F$. 
We say that $V$ is \emph{compatible} with a $P$-grading
on $\cx F$ if 
for each $a\in P$ and each $n$ the homotopy $V$ 
maps the degree $a$ homogeneous component 
$F_n^a$ into the homogeneous component $F_{n+1}^a$.

(b) 
A \emph{Lyapunov} (or \emph{chain recurrent}) 
\emph{structure 
for $V$ on $\cx F$ with values in $P$} is a 
a $P$-stratification on $\cx F$ together with a 
compatible vector field $V$.

(c) 
If given a 
Lyapunov structure with values in $P$ for a vector field $V$ 
on $\cx F$, then   
for each $a\in P$ the stratum  
\[
\ol{\cx F(a)}=\cx F(a)\Big/\sum_{b<a}\cx F(b)
\]  
is also called the \emph{chain recurrent} chain 
complex of $V$ at $a$.  
\end{definition}

The following basic properties are immediate from the 
definitions.  

\begin{proposition}\label{P:stratification-induced} 
Fix a $P$-stratification on $\cx F$. 

(a) 
A compatible vector field $V$ 
induces for each $a\in P$ 
a canonical vector field $\ol{V(a)}$ on the 
stratum $\ol{\cx F(a)}$. 

(b) 
Suppose that 
for  each $a\in P$ we are given a vector field \ 
$W_a$ on the chain complex $\ol{\cx F(a)}$ . 
Via the inclusion $\ol{\cx F(a)}_n=F_n^a\subseteq F_n$,  
each $W_a$ induces a vector field (also denoted by $W_a$) 
on $\cx F$ by setting $W_a(F_n^b)=0$ for $b\ne a$. 

(c)
Let the vector fields $W_a$ be as in part (b), and 
let $W=\sum_{a\in P}W_a$.  
Then $W$ is a vector field on $\cx F$ 
compatible with the given stratification, 
and such that $\ol{W(a)}=W_a$ for every $a\in P$. 
\end{proposition}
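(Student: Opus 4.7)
The plan is to verify each part directly from the definitions by tracking how $V$ and the $W_a$ interact with the decomposition $F_n=\bigoplus_{a\in P}F_n^a$. The three assertions are structural rather than computational, so I expect the main work to be careful bookkeeping rather than any deep new idea; the one point requiring attention throughout is the module identification $\ol{\cx F(a)}_n=F_n^a$ recorded in the definition of stratum, which lets us move freely between the stratum and the $a$-homogeneous summand of $F_n$ without being distracted by the fact that the stratum's differential is a quotient of $\phi$ rather than a restriction.

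For part (a), I will first observe that compatibility of $V$ forces $V(F_n^x)\subseteq F_{n+1}^x$ for every $x\in P$, whence $V$ preserves each subcomplex $\cx F(a)=\bigoplus_{x\le a} F_n^x$ and also the subcomplex $\sum_{b<a}\cx F(b)$. Consequently $V$ descends to a degree $+1$ endomorphism $\ol{V(a)}$ on the quotient $\ol{\cx F(a)}$; under the identification above, this is simply the restriction $V|_{F_n^a}\:F_n^a\to F_{n+1}^a$. The identity $(\ol{V(a)})^2=0$ is then inherited directly from $V^2=0$, which is all that is required of a vector field.

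For part (b), I will use $\ol{\cx F(a)}_n=F_n^a\subseteq F_n$ to extend $W_a$ by zero on each summand $F_n^b$ with $b\ne a$, obtaining a degree $+1$ endomorphism of $\cx F$ whose image lies entirely in the single summand $F_{n+1}^a$. The only point to check is $W_a^2=0$ on $\cx F$: on $F_n^a$ the composition coincides with the original $W_a^2$ on the stratum, which vanishes by hypothesis, and on $F_n^b$ for $b\ne a$ the first application of $W_a$ is already zero.

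For part (c), the sum $W=\sum_{a\in P}W_a$ is well-defined pointwise even when $P$ is infinite, since any element of $F_n$ has only finitely many nonzero homogeneous components and each $W_b$ vanishes off $F_n^b$. Compatibility is immediate from $W(F_n^a)=W_a(F_n^a)\subseteq F_{n+1}^a$. For $W^2=0$ I will expand $W^2=\sum_{a,b}W_a W_b$ and observe that $W_b(F_n^c)$ is nonzero only when $c=b$, in which case it lands in $F_{n+1}^b$, on which $W_a$ is nonzero only when $a=b$; thus only the diagonal terms $W_a^2$ survive, and these vanish by (b). Finally, $\ol{W(a)}=W_a$ is read off from the explicit description in (a), since $W|_{F_n^a}=W_a|_{F_n^a}$ already has image in $F_{n+1}^a$, so the projection to the stratum acts as the identity. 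I do not anticipate a genuine obstacle beyond this bookkeeping.
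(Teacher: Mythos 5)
Your proof is correct and is exactly the straightforward unpacking of the definitions that the paper has in mind when it declares these properties ``immediate from the definitions'' without writing out the details. In particular, your key observations — that compatibility $V(F_n^x)\subseteq F_{n+1}^x$ forces $V$ to preserve both $\cx F(a)$ and $\sum_{b<a}\cx F(b)$, and that in part (c) only the diagonal terms $W_a^2$ survive in $\sum_{a,b}W_aW_b$ since $W_b$ is supported on $F^b_\bullet$ and lands back in $F^b_\bullet$ — are precisely the points one must check.
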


\begin{remark}
For the applications we consider in this paper, one starts with a chain
complex $\cx F$ that has a natural $P$-stratification, but
is ``too big''. In that situation one way to find a smaller complex
of the same homotopy type is to construct a vector field $W$
on $\cx F$ compatible with the stratification and such that 
the dynamical system produced by iterating the flow $\Phi_W$
stabilizes in any given homological degree after finitely many 
iterations. Then one can
take the stable image as the desired smaller chain complex.
Proposition~\ref{P:stratification-induced}(c), while an
elementary consequence of the definitions, plays a crucial
role in this approach. Note also that, while most of the
theory developed so far works also for pre-vector fields, part (c)
of this proposition is where we really require vector fields.  
\end{remark}

\begin{example}\label{Ex:Taylor-3} 
With notation as in Example~\ref{Ex:Taylor-1}, for each $a\in L$ 
let $W_a$ be a vector field on 
$
C_\bullet(\Delta_{\le a},\Delta_{<a},\Bbbk) = 
C_\bullet(\Delta_{\le a},\Delta_{<a},\mathbb Z)\otimes\Bbbk
$. 
Then $W_a\otimes 1$ is a vector field on the 
chain complex 
$
\ol{\cx T(a)} = 
C_\bullet(\Delta_{\le a},\Delta_{<a},\Bbbk)\otimes_\Bbbk\Bbbk[X]
$, 
hence by Proposition~\ref{P:stratification-induced} we obtain a vector field 
$W=\sum_{a\in L} W_a\otimes 1$ on the Taylor resolution $\cx T$ 
compatible with the $L$-stratification.  
\end{example}

\section{Vector fields and splittings}
\label{S:vector-fields-and-splittings}

Here we briefly review a well-known class of vector fields whose 
flows are projections, hence already stable. 
They will be used later as building blocks to construct 
vector fields with desired asymptotic behaviour.   

Recall that the chain complex $\cx F$ \emph{splits} or 
\emph{is splittable} if it is chain homotopy equivalent 
to its homology complex $\HH(\cx F)$. Clearly this 
happens if and only if there are morphisms of complexes 
$f\:\cx F\ra\HH(\cx F)$ and $g\:\HH(\cx F)\ra\cx F$ 
and a chain homotopy $D$ on $\cx F$ such that 
$fg=\id_{\HH(\cx F)}$, and  $gf=\Phi_D$ (we say that $D$ 
\emph{realizes} a splitting of $\cx F$).  It is 
a standard calculation to verify that 
this is also equivalent to $\cx F$ having a
vector field $D$ such  that 
\begin{equation}\label{E:splitting}
\phi D\phi = \phi.   
\end{equation} 
and  
\begin{equation}\label{E:partial-splitting}
D\phi D = D. 
\end{equation}
Notice that if a vector field $D$ on $\cx F$ 
satisfies \eqref{E:partial-splitting} 
then $\Phi_D$ is an idempotent on $\cx F$, hence   
$\im\Phi_D$ is homotopic to, and a direct summand of, $\cx F$. 
However, unless an additional condition 
like \eqref{E:splitting} holds, $\im\Phi_D$ will 
not be isomorphic to $\HH(\cx F)$.  
This motivates the following terminology.

\begin{definition}\label{D:splittings} 
A \emph{partial splitting} of $\cx F$ is a vector 
field $D$ on $\cx F$ that satisfies 
\eqref{E:partial-splitting}.  
A \emph{splitting} of $\cx F$ is a partial splitting 
that satisfies \eqref{E:splitting}. 
\end{definition}

Let $D$ be a chain homotopy on $\cx F$. 
Set $N_{n+1}=\im(\phi D|_{F_n})$, let   
$M_n=\im (D\phi|_{F_n})$, and let 
$C_n=\Ker (D\phi|_{F_n})\cap\Ker(\phi D|_{F_n})$. 
If $D$ satisfies \eqref{E:splitting} or 
\eqref{E:partial-splitting} then 
both $D\phi$ and $\phi D$    
are idempotents on $\cx F$, and  
$F_n$ decomposes for each $n$ as 
\begin{equation}\label{E:near-splitting}
F_n=N_{n+1}\oplus C_n\oplus M_n. 
\end{equation} 
Since such a decomposition occurs also in  
other important cases, we are prompted  
to make the following more general definition. 

\begin{definition}
We say that a chain homotopy $D$ is a
\emph{weak partial splitting} of $\cx F$ if 
for each $n$ the decomposition \eqref{E:near-splitting} 
holds, and $\phi D$ and $D\phi$ induce automorphisms 
on $N_{n+1}$ and $M_n$, respectively, for each $n$. 
\end{definition}

\begin{example}\label{Ex:adjoint}
Suppose that $\Bbbk$ is a subfield of $\mathbb C$, 
and that $\cx F$ is a chain
complex of finite dimensional vector spaces over $\Bbbk$ such that 
each $F_i$ is given an inner product. For each $i$
let $\phi_i^*$ be the adjoint to $\phi_{i}$. Thus 
the chain homotopy
$\phi^*$ is a vector field on $\cx F$, and it is a standard fact
from basic linear algebra that
$\phi^*$ is a weak partial splitting of $\cx F$. 
\end{example}

\begin{proposition}\label{P:weak-partial-splitting} 
Let $D$ be a weak partial splitting of $\cx F$. 

(a) 
For each $n$ the maps $D$ and $\phi$ induce 
isomorphisms between $N_n$ and $M_n$. 

(b) 
Define the chain homotopy $\widehat D$ on $\cx F$ 
by $D(\phi D)^{-1}$ on $N_{n+1}$  
and by $0$ on $M_n$ and $C_n$, for each $n$. 
Then $\widehat D$ is 
a partial splitting on $\cx F$. 

(c) 
If $D$ satisfies \eqref{E:splitting} or \eqref{E:partial-splitting} then 
$\widehat D = D\phi D(I - D\phi)$.

(d)
If $D$ satisfies \eqref{E:splitting} then 
$\widehat D$ is a splitting.

(e)
If $D$ is a partial splitting then $\widehat D = D$. 
\end{proposition}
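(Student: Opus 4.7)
The plan is to exploit the decomposition $F_n = N_{n+1}\oplus C_n\oplus M_n$ and check each identity summand-by-summand. First I would record how $\phi D$ and $D\phi$ act on each of the three pieces, since this makes the whole argument tick: both vanish on $C_n$ by definition; on $M_n = \im(D\phi)$ one has $\phi D(D\phi y) = \phi D^2\phi y = 0$, and symmetrically $D\phi$ vanishes on $N_{n+1} = \im(\phi D)$. Together with the hypothesis that $\phi D|_{N_{n+1}}$ and $D\phi|_{M_n}$ are automorphisms, this completely describes both operators.

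For part (a), for $x\in F_{n-1}$ one has $D(\phi D x) = (D\phi)(Dx)\in M_n$, so $D$ carries $N_n$ into $M_n$, and symmetrically $\phi$ carries $M_n$ into $N_n$; the composites $\phi\circ D|_{N_n} = \phi D|_{N_n}$ and $D\circ\phi|_{M_n} = D\phi|_{M_n}$ are the hypothesized automorphisms, so $D$ and $\phi$ are isomorphisms between $N_n$ and $M_n$. For part (b), part (a) implies that $\widehat D$ maps $N_{n+1}$ into $M_{n+1}$ and vanishes on $C_n\oplus M_n$, giving $\widehat D^2 = 0$ at once; and the identity $\widehat D\phi\widehat D = \widehat D$ only needs to be checked on $N_{n+1}$, where $\phi\widehat D x = (\phi D)(\phi D)^{-1} x = x$ makes it transparent.

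For parts (c)--(e), the key algebraic observation is that $(D\phi D)(D\phi) = D\phi\,D^2\,\phi = 0$, so $D\phi D(I - D\phi) = D\phi D$. Under either \eqref{E:splitting} or \eqref{E:partial-splitting} a short calculation shows $\phi D\phi D = \phi D$ (hence $\phi D$ is idempotent with image $N_{n+1}$), so $\phi D|_{N_{n+1}} = \id$ and $(\phi D)^{-1}|_{N_{n+1}} = \id$ as well; then $D\phi D$ agrees with $\widehat D$ on each summand (both equal $D|_{N_{n+1}}$ on $N_{n+1}$; both vanish on $C_n$ because $\phi D x = 0$ there; both vanish on $M_n$ since $(D\phi D)(D\phi z) = D\phi\,D^2\phi z = 0$), which proves (c). Part (e) is then immediate since $D\phi D = D$ for a partial splitting, and for (d) I would use (c) to compute $\phi\widehat D\phi = \phi D\phi D\phi = (\phi D\phi)D\phi = \phi D\phi = \phi$ by two applications of \eqref{E:splitting}, and combine with (b).

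I expect the only real difficulty to be careful bookkeeping: once $\phi D$, $D\phi$, and $I - \phi D - D\phi$ are recognized as the (generalized) projections onto $N_{n+1}$, $M_n$, and $C_n$ respectively, every assertion in the proposition reduces to a direct summand-wise computation.
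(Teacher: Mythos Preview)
Your arguments for (a) and (b) are correct and essentially match the paper's. The problem is in (c): you repeatedly use $D^2=0$ --- in the opening claim that $\phi D$ vanishes on $M_n$ via $\phi D(D\phi y)=\phi D^2\phi y$, in the simplification $D\phi D(I-D\phi)=D\phi D$, and again when arguing that $D\phi D$ vanishes on $M_n$. But a weak partial splitting is only a chain homotopy, not a vector field, and neither $\phi D\phi=\phi$ nor $D\phi D=D$ forces $D^2=0$. (The ``symmetric'' claim that $D\phi$ vanishes on $N_{n+1}$ is fine, since that one uses $\phi^2=0$.) For a concrete failure, take $F_0=F_2=\Bbbk$, $F_1=\Bbbk^2$, $\phi_1=[\,1\ 0\,]$, $\phi_2=\bigl[\begin{smallmatrix}0\\1\end{smallmatrix}\bigr]$, $D_1=\bigl[\begin{smallmatrix}1\\b\end{smallmatrix}\bigr]$, $D_2=[\,c\ 1\,]$ with $b+c\ne 0$: then $\phi D\phi=\phi$ holds and $D$ is a weak partial splitting, yet $D_2\phi_2 D_2$ sends $[s,bs]^T\in M_1$ to $(b+c)s\ne 0$, so $D\phi D\ne\widehat D$.

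The fix is to keep the factor $(I-D\phi)$ rather than discard it: under either hypothesis $D\phi$ is idempotent with image $M_n$, so $D\phi|_{M_n}=\id$, hence $(I-D\phi)|_{M_n}=0$ and $D\phi D(I-D\phi)|_{M_n}=0$ directly --- this is exactly what the paper does. Your computation in (d) survives unchanged because $(I-D\phi)\phi=\phi-D\phi^2=\phi$, so $\phi\widehat D\phi=\phi D\phi D\phi$ regardless; and (e) is salvaged because a partial splitting \emph{is} a vector field by definition, so there $D^2=0$ is legitimately available.
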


\begin{proof} 
Note that $D(N_n)=D\phi D(N_n)\subseteq M_n$
hence $D\: N_n \lra M_n$ is a monomorphism. Furthermore, 
$\phi(M_n)=\phi D\phi(M_n)\subseteq N_n$, hence 
$\phi\: M_n\lra N_n$ is also a monomorphism. Since 
$\phi(D(N_n))=N_n$ we see that $\phi\: M_n\lra N_n$ is 
also epi hence isomorphism. Finally, since 
$D(\phi(M_n))=M_n$ we see  that $D\: N_n\lra M_n$ is 
also epi hence isomorphism. This takes care of part (a). 

(b) 
Since $\widehat D(F_n)=D(N_{n+1})=M_{n+1}$ by part (a), 
it is clear that $\widehat D^2=0$. Furthermore,  
$\widehat D\phi\widehat D=\widehat D=0$ 
on $C_n\oplus M_n$ for each $n$. Finally, on $N_{n+1}$ we have 
$\widehat D\phi\widehat D=\widehat D\phi D(\phi D)^{-1} =
\widehat D$. 

(c)
Since $\phi D$ is the identity on $N_{n+1}$, we have that
$\widehat D$ is given by $D$ on $N_{n+1}$ and by $0$ on $C_n\oplus M_n$.
As $D\phi D(I- D\phi)$ satisfies the same condition, it equals $\widehat D$. 

(d) and (e) are 
immediate consequences of (b) and (c).  
\end{proof}

\begin{remark}\label{R:affine-combination}
Suppose $D_1, \dots, D_n$ are homotopies of $\cx F$ that satsify
\eqref{E:splitting}, 
and $r_1,\dots, r_n$ are central elements of our base
ring such that $r_1+\dots +r_n=1$.
Then $D=r_1D_1 + \dots + r_nD_n$ satisfies \eqref{E:splitting}, 
and therefore $\widehat D$ is a
splitting by Proposition~\ref{P:weak-partial-splitting}. 
\end{remark}

\begin{remark}[Moore-Penrose splitting]\label{R:pseudo-inverses}
Suppose that $\Bbbk$ is a subfield of $\mathbb C$, 
and that $\cx F$ is a chain
complex of finite dimensional vector spaces over $\Bbbk$ such that 
each $F_i$ is given an inner product. For each $i$
let $\phi_i^*$ be the adjoint to $\phi_{i}$.
As observed in Example~\ref{Ex:adjoint},  
the chain homotopy $\phi^*$ is a
weak partial splitting of $\cx F$. The  partial
splitting  $\phi^{+}=\widehat{\phi^*}$ in this case is given
by taking the well known Moore-Penrose \cite{M, P, Bj} 
pseudo-inverses $\phi_i^{+}$ 
of the maps $\phi_{i}$; in particular, $\phi^+$ is a splitting.  
Indeed, let $f_i(x)=\det(xI - \phi_{i}\phi_i^*)$ be the characteristic 
polynomial of $\phi_{i}\phi_i^*$. Since $\phi_{i}\phi_i^*$ is 
self-adjoint, we have $f_i(x)=x^{n_i}g_i(x)$ where
$n_i = \dim F_{i-1} - \rank \phi_{i}$, and $g_i(0)\ne 0$. It follows that 
\[
1 - \frac{g_i(x)}{g_i(0)} = x p_i(x)
\]
for a canonically and explicitly determined by $\phi_{i}$ and $\phi_i^*$ 
polynomial $p_i(x)$. It is
now standard to check that  $p_i(\phi_{i}\phi_i^*)$ is the inverse of
$\phi_{i}\phi_i^*$ on  $N_{i}=\im(\phi_{i}\phi_i^*)$, and that
$\phi_{i}\phi_i^*p_i(\phi_{i}\phi_i^*)$ is the orthogonal projection
of $F_{i-1}$ onto $N_{i}$. Therefore, 
by Proposition~\ref{P:weak-partial-splitting}, we obtain the 
explicit formula
\begin{equation}\label{E:pseudo-inverse} 
\phi_i^{+} =
\phi_i^* p_i(\phi_{i}\phi_i^*)\phi_{i}\phi_i^* p_i(\phi_{i}\phi_i^*).  
\end{equation}
It is trivial to check from this formula that $\phi_i^{+}$ satisfies  
the Moore-Penrose conditions \cite[p. 290]{GVL} hence equals the 
Moore-Penrose pseudo-inverse of $\phi_{i}$; in particular,
we recovered its standard canonical explicit 
polynomial expression in terms of $\phi_{i}$ and its adjoint. 
\end{remark}

\begin{example}\label{Ex:pseudo-inverses} 
Consider the chain complex of $\mathbb Q$-vector spaces 
\[
0 \lla 
\mathbb Q 
\xleftarrow[\phi_1]{
[
\begin{smallmatrix}
1 & 1 & 1 & 1 & 1 & 1 & 1 
\end{smallmatrix}
]
} 
\mathbb Q^{7} 
\xleftarrow[\phi_2]{
\left[
\begin{smallmatrix}
\hm 0 & \hm 0 & \hm 0 & \hm 0 & \hm 0 & \hm 0  \\ 
-1    &    -1 & \hm 0 & \hm 0 & \hm 0 & \hm 0  \\ 
\hm 0 & \hm 0 &    -1 &    -1 & \hm 0 & \hm 0  \\ 
\hm 0 & \hm 0 & \hm 0 & \hm 0 &    -1 &    -1  \\ 
\hm 1 & \hm 0 & \hm 1 & \hm 0 & \hm 0 & \hm 0  \\ 
\hm 0 & \hm 1 & \hm 0 & \hm 0 & \hm 1 & \hm 0  \\
\hm 0 & \hm 0 & \hm 0 & \hm 1 & \hm 0 & \hm 1
\end{smallmatrix}
\right]
}
\mathbb Q^{6} 
\lla 0, 
\]
where the inner products are the ones making the standard  
basis on each $\mathbb Q^m$ orthonormal.  
We have 
\[
\phi_2\phi_2^* = 
\left[
\begin{smallmatrix}
 0 & \hm 0 & \hm 0 & \hm 0 & \hm 0 & \hm 0 & \hm 0  \\
 0 & \hm 2 & \hm 0 & \hm 0 &    -1 &    -1 & \hm 0  \\
 0 & \hm 0 & \hm 2 & \hm 0 &    -1 & \hm 0 &    -1  \\
 0 & \hm 0 & \hm 0 & \hm 2 & \hm 0 &    -1 &    -1  \\
 0 &    -1 &    -1 & \hm 0 & \hm 2 & \hm 0 & \hm 0  \\
 0 &    -1 & \hm 0 &    -1 & \hm 0 & \hm 2 & \hm 0  \\ 
 0 & \hm 0 &    -1 &    -1 & \hm 0 & \hm 0 & \hm 2  
\end{smallmatrix}
\right]
\]
which has 
characteristic polynomial 
$
f_2(x) = \det(xI - \phi_2\phi^*_2) = 
x^7  - 12x^6  + 54x^5  - 112x^4  + 105x^3  - 36x^2
$, 
therefore 
\[
p_2(x) = 
\frac{1}{36}(x^4 - 12x^3  + 54x^2 - 112x + 105),
\]  
and thus we obtain by \eqref{E:pseudo-inverse} that 
\[
\phi_2^+ = 
\frac{1}{12}
\left[ 
\begin{smallmatrix}
      0& {-5}&\hm 3& {-1}&\hm 5& {-3}&\hm 1\\
      0& {-5}& {-1}&\hm 3& {-3}&\hm 5&\hm 1\\
      0&\hm 3& {-5}& {-1}&\hm 5&\hm 1& {-3}\\
      0& {-1}& {-5}&\hm 3& {-3}&\hm 1&\hm 5\\
      0&\hm 3& {-1}& {-5}&\hm 1&\hm 5& {-3}\\
      0& {-1}&\hm 3& {-5}&\hm 1& {-3}&\hm 5
\end{smallmatrix}
\right]. 
\]
Doing a similar (but much simpler) computation for $\phi^+_1$ 
yields that the Moore-Penrose splitting $\phi^+$ has the form 
\[
0\lra \mathbb Q 
\xrightarrow[\phi_1^+]{
\left[ 
\begin{smallmatrix} 
1/7 \\
1/7 \\
1/7 \\
1/7 \\
1/7 \\
1/7 \\ 
1/7
\end{smallmatrix}
\right] 
}
\mathbb Q^7 
\xrightarrow[\phi_2^+]{
1/12 
\left[
\begin{smallmatrix} 
      0& {-5}&\hm 3& {-1}&\hm 5& {-3}&\hm 1\\
      0& {-5}& {-1}&\hm 3& {-3}&\hm 5&\hm 1\\
      0&\hm 3& {-5}& {-1}&\hm 5&\hm 1& {-3}\\
      0& {-1}& {-5}&\hm 3& {-3}&\hm 1&\hm 5\\
      0&\hm 3& {-1}& {-5}&\hm 1&\hm 5& {-3}\\
      0& {-1}&\hm 3& {-5}&\hm 1& {-3}&\hm 5
\end{smallmatrix}
\right]
}
\mathbb Q^6 \lra 0.  
\]
\end{example}

\begin{example}\label{Ex:Taylor-0}
With notation as in Example~\ref{Ex:Taylor-3}, take $\Bbbk =\mathbb Q$. 
For each $n$ we have 
$
C_n(\Delta_{\le a},\Delta_{<a},\mathbb Z) = 
\bigoplus_{m_\sigma=a}\HH_n\bigl(\ol{[\sigma]},\dot{[\sigma]},\mathbb Z\bigr)
$. 
Since each $\HH_n\bigl(\ol{[\sigma]},\dot{[\sigma]},\mathbb Z\bigr)$ is 
just a copy of $\mathbb Z$ it has a unique up to sign free generator. 
This yields a natural up to signs basis of
$C_n=C_n(\Delta_{\le a},\Delta_{<a},\mathbb Q)$ and therefore induces a natural 
independent of the signs inner product on $C_n$ in which this basis is 
orthonormal. Thus, as in Remark~\ref{R:pseudo-inverses} we obtain an 
explicit natural splitting $d_a^{+}$ of 
$C_\bullet(\Delta_{\le a},\Delta_{<a},\mathbb Q)$ by taking the Moore-Penrose 
pseudo-inverses of its differentials. This induces an explicit natural 
splitting $\delta_a^{+}=d_a^+\otimes 1$ on the stratum $\ol{\cx T(a)}$, 
hence, by Proposition~\ref{P:stratification-induced}, 
an explicit natural vector field $\delta^{+}=\sum\delta_a^{+}$ on 
the Taylor resolution $\cx T$ compatible with the $L$-stratification.     
\end{example}

\section{Asymptotic properties}
\label{S:asymptotic-properties}

Just as in the classical dynamical systems case, where the
asymptotic properties of the flow of 
a vector field on a smooth compact manifold
can be controlled by a Lyapunov function, see e.g. \cite{C, Fr},
the behaviour in the long run of the flow of a vector 
field on a chain complex can be controlled by a Lyapunov 
structure. 

To understand this mechanism, 
recall that if $P$ is a poset and $a\in P$ then the 
\emph{dimension} of $a$ in $P$ is the supremum $d_P(a)$ 
of the lengths $n$ of strictly increasing chains 
$a_0<\dots < a_n=a$ in $P$ terminating at $a$. 
The \emph{dimension} $\dim P$ of $P$ is the supremum 
of the dimensions $d_P(a)$ of all the elements $a$ of $P$ . 
The following has been a main motivating example for us. 

\begin{example}\label{Ex:Taylor-4}  
With notation as in Example~\ref{Ex:Taylor-3}, suppose that 
$W_a$ is a splitting of $\ol{\cx T(a)}$ for each $a\in L$. 
Then, although not explicity included in its statement, 
the proof of \cite[Lemma~4.1]{Y} also shows that 
$\Phi_W^{k+1}=\Phi_W^k$ whenever $k>\dim L$. Thus, $\dim L$ 
controls how quickly the flow of $W$ stabilizes. Furthermore, 
this implies that $\Phi_W^{1+\dim L}$ is a projection of $\cx T$ onto 
a chain homotopic direct summand $\cx M_W$ of $\cx T$. 
\end{example}

Another relevant instance of this mechanism is the proof of  
the basic perturbation lemma from homological perturbation theory, see 
e.g.~\cite[Basic Perturbation Lemma 2.4.1]{GL}, its argument 
going back to \cite{G,B,S}.  
It can be understood in our language as giving a sufficient
condition that certain vector fields have flows that stabilize
in any given homological degree after finitely may iterations.
As Example~\ref{Ex:double-complex} suggests, a similar 
interpretation can be given also to Eagon's construction 
of a Wall complex~\cite[Theorem~1.2]{E}; 
see also \cite[Lemma~3.5]{EFS} for a categorical version of
this construction.

The next result, which is the main result in this section, 
can be thought of as an elaborate and explicit
generalization of Yuzvinsky's argument we referred to 
in Example~\ref{Ex:Taylor-4}. 
It can also be interpreted as a detailed analysis,
from our dynamical systems viewpoint, of
a special case of the basic perturbation lemma of homological perturbation
theory. Either way, this is the key technical fact needed for 
the applications we consider in this paper.

\begin{theorem}\label{T:asymptotic}
Fix a $P$-grading on $\cx F$. For each $c\in P$ let $V_c$ 
be a vector field on the chain complex $\ol{\cx F(c)}$, and 
let $V=\oplus_c V_c$ be the induced vector field on $\cx F$. Consider 
the subposet  $P_n=\{c\in P\mid F_n^c\ne 0\}$ of $P$, 
let $a\in P_n$ be of finite dimension, 
suppose that for each $c\in P_n$ with $c\le a$ the vector field 
$V_c$ is a partial splitting on $\ol{\cx F(c)}$, and     
let $F_n^c=N^c_{n+1}\oplus C^c_n\oplus M^c_n$ be the corresponding 
decomposition as in \eqref{E:near-splitting}. 

Then for any 
collection of nonnegative integers $\{k_c\mid c\le a\}$ 
we have 
\[
\sum_{c\le a}\Phi_V^{k_c}(C^c_n)=
\bigoplus_{c\le a}\Phi_V^{k_c}(C^c_n),
\] 
and each restriction $\Phi_V^{k_c}\: C^c_n \lra \Phi_V^{k_c}(C^c_n)$ 
is an isomorphism.  Furthermore, 
for each integer $k > d_{P_n}(a)$ we also have:   
\begin{enumerate}
\item 
$\Phi_V^k(M_n^a)=0$; 

\item 
$[\Phi_V -\id](C_n^a)\subseteq\bigoplus_{b<a}M_n^b$; 

\item
$[\Phi_V^{k}-\Phi_V^{k-1}](C_n^a)=0$; 

\item
$
\Phi_V^k(N_{n+1}^a)\subseteq\bigoplus_{b<a}\Phi_V^{d_{P_n}(b)}(C_n^b);
$ 

\item 
$[\Phi_V^{k+1}-\Phi_V^k](F_n^a)=0$; 

\item
$
\Phi_V^k(F_n^a)\subseteq
\bigoplus_{b\le a}\Phi_V^{d_{P_n}(b)}(C_n^b).
$

\item 
$[\Phi_V^{k+1}-\Phi_V^k]\bigl(\cx F(a)_n\bigr)=0$. 

\item 
$
\Phi_V^k\bigl(\cx F(a)_n\bigr)=
\bigoplus_{b\le a}\Phi_V^{d_{P_n}(b)}(C_n^b).
$ 
\end{enumerate}  
\end{theorem}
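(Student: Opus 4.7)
The plan is to induct on $d := d_{P_n}(a)$, exploiting the fact that both $\cx F(a)_n$ and $Q_a := \bigoplus_{b<a}F_n^b = \sum_{b<a}\cx F(b)_n$ are $\Phi_V$-invariant subspaces of $F_n$. Decomposing $\phi = \bar\phi + \phi_{\mathrm{low}}$ along the stratification (where $\bar\phi$ acts stratum-wise as the induced stratum differential and $\phi_{\mathrm{low}}$ strictly lowers the poset degree), one checks that on $F_n^c$ the flow $\Phi_V$ differs from the stratum flow $\Phi_{V_c}$ only by terms landing in $Q_c$, while $\Phi_{V_c}$ itself is the idempotent projection of $F_n^c$ onto $C_n^c$ with kernel $N_{n+1}^c\oplus M_n^c$. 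Before the induction I record two non-inductive facts: (a) $V_c|_{C_n^c}=0$, since $V_c=V_c\bar\phi V_c$ combined with $\bar\phi V_c|_{C_n^c}=0$ gives $V_cx=V_c(\bar\phi V_cx)=0$; and (b) $\Phi_V^k(x)\equiv x\pmod{Q_c}$ for every $x\in C_n^c$ and $k\ge 0$, because $\Phi_V(x)-x\in Q_c$ (by the mechanism used for claim (2) below) and $Q_c$ is $\Phi_V$-invariant. These yield the direct sum claim at once: in any relation $\sum_{c\le a}\Phi_V^{k_c}(x_c)=0$, pick a maximal $c^*$ in its support and apply the stratum projection $\pi^{c^*}\:\cx F(a)_n\to F_n^{c^*}$; every $c\ne c^*$ in the support satisfies $c^*\not\le c$, so $\Phi_V^{k_c}(x_c)\in\cx F(c)_n$ is annihilated by $\pi^{c^*}$, whereas $\pi^{c^*}(\Phi_V^{k_{c^*}}(x_{c^*}))=x_{c^*}$, forcing $x_{c^*}=0$. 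The same projection shows each $\Phi_V^{k_c}|_{C_n^c}$ is injective.

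The base case $d=0$ is immediate since $Q_a=0$ forces $\Phi_V|_{F_n^a}=\Phi_{V_a}$. For the inductive step I first establish (1) by showing $\Phi_V(M_n^a)\subseteq\bigoplus_{b<a}M_n^b$: for $y\in M_n^a$ the identities $V_ay=0$ (since $V_aM_n^a=V_a^2\bar\phi(F_n^a)=0$) and $V_a\bar\phi y=y$ (idempotency of $V_a\bar\phi$ on $M_n^a$) reduce $\Phi_V(y)$ to $-V\phi_{\mathrm{low}}(y)$, and each $V_b$-summand lies in $M_n^b$ because the image of any partial splitting is contained in its own $M$-component. Iterating $k$ times confines the image to the sum over chains $c_0<\dots<c_{k-1}<a$ in $P_n$, which is empty once $k>d$. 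A parallel computation on $C_n^a$ using $V_ax=0$, $\bar\phi V_ax=0$, and $V_a\bar\phi x=0$ yields (2), and (3) follows by applying $\Phi_V^{k-1}$ to (2) and invoking the inductive (1) on each $M_n^b$.

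For the $N_{n+1}^a$-piece, the identities $V_ax\in M_{n+1}^a$ and $\bar\phi V_ax=x$ for $x\in N_{n+1}^a$ give $\Phi_V(N_{n+1}^a)\subseteq Q_a$. Hence $(\Phi_V^{k+1}-\Phi_V^k)(x)=(\Phi_V^k-\Phi_V^{k-1})(\Phi_V(x))$ vanishes for $k>d$ by the inductive (7) applied on $Q_a$, and $\Phi_V^k(x)=\Phi_V^{k-1}(\Phi_V(x))$ lies in $\bigoplus_{b<a}\Phi_V^{d_{P_n}(b)}(C_n^b)$ by the inductive (8) on $Q_a$. Assembling these with (1) on $M_n^a$ and (3) on $C_n^a$ along the decomposition $F_n^a=N_{n+1}^a\oplus C_n^a\oplus M_n^a$ delivers (4), (5), and (6); finally (7) and (8) on $\cx F(a)_n=F_n^a\oplus Q_a$ combine the $F_n^a$-results with the inductive (7), (8) on $Q_a$.

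The main obstacle is organizational: the eight items feed into one another through the inductive hypothesis, so the argument must be sequenced carefully (direct sum $\to$ (1) $\to$ (2) $\to$ (3) $\to$ analysis on $N_{n+1}^a$ via inductive (7), (8) $\to$ (4)--(6) $\to$ (7)--(8)) and never doubled back. The substantive content lies in the non-inductive identities $V_c|_{C_n^c}=0$ and $\Phi_V(N_{n+1}^c)\subseteq Q_c$; once these are in hand, every claim reduces to iteration controlled by chain length in $P_n$.
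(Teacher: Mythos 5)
Your proof is correct and follows essentially the same approach as the paper's: induction on $d_{P_n}(a)$, the $N/C/M$ decomposition of each stratum via the partial splitting, and the observation that one application of $\Phi_V$ strictly lowers the strata reached (except for the diagonal identity piece on $C_n^a$). The only differences are organizational — you pull the identity $V_c|_{C_n^c}=0$ and $\Phi_V(x)\equiv x\pmod{Q_c}$ out as non‑inductive facts and use them to get the direct‑sum claim before starting the induction, whereas the paper proves the direct sum inside the inductive step after establishing (2), and you appeal to inductive (7)/(8) on $Q_a$ where the paper cites inductive (6) — but the underlying calculations are identical.
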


\begin{proof}
We use induction on $t=d_{P_n}(a)$. When $t=0$ then $a$ is 
a minimal element of $P_n$, hence $\cx F(a)_n=\ol{\cx F(a)}_n$,  
therefore $V=V_a$ on $\cx F(a)_n$ and  
$\Phi_{V_a}=\Phi_V$ on $\cx F(a)_n$; hence all claims of the Theorem 
are trivially satisfied. 
This we assume that $t\ge 1$, and that our assertions are true for 
all elements of $P_n$ of dimension strictly less than $t$. 

By the definitions and Proposition~\ref{P:weak-partial-splitting}
we have for every $b\in P_n$ with $b\le a$ that 
$V_b(C_n^b\oplus M_n^b)=0$, and $V_b(N^b_{n+1})=M^b_{n+1}$, 
and $\ol{\phi(b)}(M^b_{n+1})=N^b_{n+1}$, as well as 
\begin{align*} 
\Phi_{V_b} &= 
\begin{cases} 
0     &\text{on } N^b_{n+1}\oplus M^b_n; \\ 
\id   &\text{on } C^b_n; 
\end{cases}
\\ 
V_b\ol{\phi(b)} &= 
\begin{cases} 
0     &\text{on } N^b_{n+1}\oplus C^b_n; \\
\id   &\text{on } M^b_n;  
\end{cases} 
\\
\ol{\phi(b)}V_b &= 
\begin{cases} 
0     &\text{on } C^b_n\oplus M^b_n; \\ 
\id   &\text{on } N^b_{n+1}.  
\end{cases}
\end{align*}  
Therefore 
$\phi(M_n^a)\subseteq N_{n}^a\oplus \bigoplus_{b<a} F_{n-1}^b$, and  
\[
[\id - V\phi](M_n^a)\subseteq\bigoplus_{b<a}M_n^b.
\] 
Hence  
$\Phi_V(M_n^a)\subseteq\bigoplus_{b<a}M_n^b$ and (1)  
follows from our induction hypothesis. 

Similarly, since 
$V(\cx F(a)_{n-1})= \bigoplus_{b\le a}M_n^b$, we have 
$V\phi(C^a_n)\subseteq \bigoplus_{b<a}M^b_n$. Also, $\phi V(C^a_n) = 0$ 
for trivial reasons, and (2) follows. In particular, 
$\Phi_V(C^a_n)\subseteq C^a_n\oplus \bigoplus_{b<a}M^b_n$ and the 
$C^a_n$-component of $\Phi_V(x)$ is $x$ for each $x\in C^a_n$. Therefore 
the same is true for the $C^a_n$-component of $\Phi_V^{k_a}(x)$. It is now 
immediate that the restriction 
$\Phi_V^{k_a}\: C^a_n \lra \Phi_V^{k_a}(C^a_n)$ is an isomorphism, and that  
the sum $\sum_{c\le a}\Phi^{k_c}(C^c_n)$ is a direct sum.

Next we note that (3) is an immediate consequence of 
(1) and (2), and proceed with the proof of (4). 
Since $\Phi_{V_a}(N_{n+1}^a)=0$ we get 
$\Phi_V(N_{n+1}^a)\subseteq \bigoplus_{b<a}F_n^b$, and thus 
(4) follows by our induction hypothesis using (6).  
Since $\Phi_{V_a}(F_n^a)=C^a_n$, we see that 
$\Phi_V(F_n^a)\subseteq C_n^a\oplus\bigoplus_{b<a}F_n^b$, 
hence (5) and (6) follow from (3) and the induction hypothesis. 
Now 
(7) follows from (5) and the induction hypothesis. Finally, 
(8) is clear from (1), (3), (4), and the induction 
hypothesis.  
\end{proof}

\begin{corollary}\label{C:splittings-are-minimal} 
Suppose $a$ has finite dimension in both $P_n$ and $P_{n-1}$.  
Suppose that $V_a$ is a splitting of $\ol{\cx F(a)}$, and 
that $V_c$ is a partial splitting of $\ol{\cx F(c)}$ for each 
$c\in P_n\cup P_{n-1}$ with $c\le a$. 

Then \ 
$
\phi\bigl(\Phi^{d_{P_n}(a)}_V(C^a_n)\bigr)\subseteq 
\bigoplus_{b<a}\Phi^{d_{P_{n-1}}(b)}_V(C^b_{n-1}). 
$   
\end{corollary}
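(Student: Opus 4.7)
The plan is to combine two ingredients: the \emph{splitting} (and not merely \emph{partial splitting}) hypothesis on $V_a$ will force $\phi$ to map $C_n^a$ into the part of $\cx F(a)_{n-1}$ supported strictly below $a$, and the stabilization statements (3) and (6) of Theorem~\ref{T:asymptotic} will control $\Phi_V^k$ on that subspace.

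First I would show that $\phi(x)\in\bigoplus_{y<a,\,y\in P_{n-1}}F_{n-1}^y$ for every $x\in C_n^a$. Since $x\in C_n^a\subseteq\Ker(V_a\ol{\phi(a)})$, we have $V_a\ol{\phi(a)}(x)=0$, and applying $\ol{\phi(a)}$ yields $\ol{\phi(a)}V_a\ol{\phi(a)}(x)=0$. The splitting identity \eqref{E:splitting} for $V_a$ reads $\ol{\phi(a)}V_a\ol{\phi(a)}=\ol{\phi(a)}$, so $\ol{\phi(a)}(x)=0$. Since the image of $\phi(x)\in\cx F(a)_{n-1}=F_{n-1}^a\oplus\bigoplus_{y<a}F_{n-1}^y$ in the quotient $\ol{\cx F(a)}_{n-1}=F_{n-1}^a$ is exactly $\ol{\phi(a)}(x)$, the $F_{n-1}^a$-component of $\phi(x)$ vanishes. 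This is the only step that uses the full splitting hypothesis on $V_a$ (rather than merely partial splitting), and I expect it to be the main subtlety of the argument.

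Next, using that $\Phi_V$ is a chain map, I would write $\phi\Phi_V^{d_{P_n}(a)}(x)=\Phi_V^{d_{P_n}(a)}\phi(x)$. Part (3) of Theorem~\ref{T:asymptotic} applied in homological degree $n$ says that $\Phi_V^k(x)$ stabilizes for $k\ge d_{P_n}(a)$; hence the common value $\phi\Phi_V^{d_{P_n}(a)}(x)$ equals $\Phi_V^k\phi(x)$ for every $k\ge d_{P_n}(a)$.

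Finally, I would write $\phi(x)=\sum_y u_y$ with $u_y\in F_{n-1}^y$ and $y$ running over the finite support, which lies in $\{y<a:y\in P_{n-1}\}$ by the first step. Each such $y$ has finite dimension in $P_{n-1}$ because any chain in $P_{n-1}$ ending at $y$ can be prolonged by $a\in P_{n-1}$. Choose $k\ge d_{P_n}(a)$ large enough that also $k>d_{P_{n-1}}(y)$ for every $y$ in this support. Part (6) of Theorem~\ref{T:asymptotic} applied in homological degree $n-1$ then gives $\Phi_V^k(F_{n-1}^y)\subseteq\bigoplus_{b\le y}\Phi_V^{d_{P_{n-1}}(b)}(C_{n-1}^b)$, and every $b\le y$ satisfies $b<a$. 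Summing over $y$ yields the required inclusion, with directness of the right-hand sum guaranteed by the first assertion of Theorem~\ref{T:asymptotic} applied in degree $n-1$.
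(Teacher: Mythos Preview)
Your argument is correct and follows essentially the same route as the paper: both proofs hinge on the fact that the splitting hypothesis forces $\ol{\phi(a)}(C^a_n)=0$, then use that $\phi$ commutes with $\Phi_V$ together with the stabilization parts of Theorem~\ref{T:asymptotic}. The only cosmetic difference is that the paper first applies one $\Phi_V$ (using part~(2)) before applying $\phi$ and invoking part~(8), whereas you apply $\phi$ directly and then invoke parts~(3) and~(6); your version is more explicit about choosing $k$ large enough.
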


\begin{proof} 
Since $\ol{\phi(a)}(C^a_n)=0$ and 
$\Phi_V(C^a_n)\subseteq C^a_n\oplus\bigoplus_{b<a} M^b_n$ 
we have $\phi\Phi_V(C^a_n)\subseteq \sum_{b<a}\cx F(b)_{n-1}$. 
As $\phi$ commutes with $\Phi_V$  
the desired conclusion is now immediate from 
Theorem~\ref{T:asymptotic}(8). 
\end{proof}

\section{Toric rings}\label{S:toric-rings}

In this section 
we address the problem of constructing minimal free resolutions 
of toric rings. These rings arise in geometry as the affine coordinate rings 
of toric varieties, and in combinatorics and other fields as 
semigroup rings of affine pointed semigroups (recall 
that a pointed affine semigroup is one that is isomorphic 
to a finitely generated subsemigroup of some $\mathbb N^m$). 
Toric rings are the subject of active current research, 
see e.g.~\cite{P, MS, F} and the references there, and 
canonical but in general non-minimal resolutions have been 
constructed in~\cite{BS, TV1}. For more details on the properties 
of toric rings and affine pointed semigroups we refer the 
reader to the excellent expositions in \cite{MS} and \cite{P}. 
We will now describe how to construct their minimal resolutions 
in a canonical and intrinsic fashion.    

Let $Q$ be an affine pointed semigroup. Then $Q$ has a unique 
minimal generating set $A$. Let $X=\{x_a \mid a\in A\}$ be a set 
of variables, and let $\Bbbk$ be a field. The map $x_a\mapsto a$ 
induces a surjective homomorphism of $\Bbbk$-algebras 
$\Bbbk[X]\lra \Bbbk[Q]$, and a natural $Q$-grading on the 
polynomial ring $\Bbbk[X]$ over $\Bbbk$ in the set of variables $X$
such that $\deg_Qx_a = a$. 
This makes the toric ring $\Bbbk[Q]$ into a $Q$-graded 
$\Bbbk[X]$-module. It is well known that $\Bbbk[Q]$ has a minimal 
free $Q$-graded resolution over $\Bbbk[X]$. The \emph{Betti degrees} 
of $Q$ over $\Bbbk$ are the elements of $Q$ that appear as $Q$-degrees 
of basis elements of the free modules in such a  minimal resolution. 
They can be computed combinatorially directly, without having to 
compute a minimal resolution first, see e.g. \cite[Theorem~9.2]{MS}. 

The semigroup $Q$ has a natural partial order given by $q\le r$ if 
and only if $q+s=r$ for some $s\in Q$. Equivalently, $q\le r$ if and 
only if there is a monomial $m$ in the variables $X$ such that 
$q+\deg_Qm = r$. This induces a partial order on the set 
$B=B(Q,\Bbbk)$ of Betti degrees of $Q$ over $\Bbbk$.
Let $\mathcal Q$ be the category with objects the elements of $Q$, with  
morphisms from $q$ to $r$ given by those monomials $m$ in the variables 
$X$ such that $q +\deg_Qm = r$. Composition of morphisms is given by 
multiplication of monomials.  The full subcategory of $\mathcal Q$ with 
objects the Betti degrees $B$ of $Q$ over $\Bbbk$ is called the 
\emph{Betti category} of $Q$ over $\Bbbk$ and denoted by 
$\mathcal B=\mathcal B(Q,\Bbbk)$.  

In \cite[Theorem~9.2]{TV1} a (non-minimal in general) canonical based finite 
free $Q$-graded resolution $\cx F$ of $\Bbbk[Q]$ 
over $\Bbbk[X]$ was constructed as follows. 
In homological degree $n$ the module $F_n$ is the free 
$\Bbbk[X]$-module with homogeneous basis given by all sequences 
$q_0\lra q_1 \lra \dots \lra q_n$ of composable nonidentity 
morphisms in the Betti category $\mathcal B(Q,\Bbbk)$, with
the $Q$-degree of the sequence being $q_n$. The differential 
$\phi_n\: F_n\lra F_{n-1}$ is defined as the sum 
$\phi_n = \sum_{i=0}^n(-1)^i\partial_i$, where the map $\partial_i$ 
sends the sequence 
\begin{equation}\label{E:morphism-sequence}
q_0\xrightarrow{m_1} q_1 
\xrightarrow{m_2} \dots \xrightarrow{m_{n-1}} 
q_{n-1}\xrightarrow{m_n} q_n
\end{equation}
to 
\[
\begin{cases}
q_0\rightarrow\dots\rightarrow q_{i-1}\xrightarrow{m_im_{i+1}}
q_{i+1}\rightarrow\dots\rightarrow q_n    
&\text{if } 0<i<n; 
\\
q_1\rightarrow\dots\rightarrow q_n 
&\text{if } i = 0; 
\\
m_n ( q_0\rightarrow\dots\rightarrow q_{n-1} ) 
&\text{if } i=n.  
\end{cases} 
\]
Using the partial order on the set $B$ of Betti degrees 
we have the natural $B$-stratification of $\cx F$ where  
$F_n^q$ is the free direct summand of $F_n$ with basis the 
set of all composable sequences  \eqref{E:morphism-sequence} 
of nonidentity morphisms in $\mathcal B$ such that $q_n=q$.  

Let us describe the corresponding stratum 
$\ol{\cx F(q)}=\cx F(q)/\sum_{p<q}\cx F(p)$. Let $\Delta_{\le q}$
(respectively $\Delta_{<q}$)  
be the semi-simplicial subset, see e.g. \cite[Sections~8.1 and~8.2]{W},   
of the nerve of $\mathcal B$, consisting of 
all sequences \eqref{E:morphism-sequence}
of composable nonidentity morphisms with $q_n\le q$ 
(respectively $q_n<q$).  It is immediate from the definitions 
that 
\[
\ol{\cx F(q)} = 
\Bbbk[X]\otimes_{\mathbb F} C_{\bullet}(\Delta_{\le q},\Delta_{<q},\mathbb F)
\]  
where 
$ 
C_{\bullet}(\Delta_{\le q},\Delta_{<q},\mathbb F) = 
 C_{\bullet}(\Delta_{\le q},\mathbb F)/ 
C_{\bullet}(\Delta_{<q},\mathbb F) 
$
is  the simplicial chain complex of the pair 
$(\Delta_{\le q},\Delta_{<q} )$ with coefficients in the prime 
field $\mathbb F$ of the field $\Bbbk$.

We are now all set to use the machinery we have developed in 
the previous sections. As a first step we have

\begin{theorem}\label{T:toric-minimal} 
Let $\mathbb F$ be the prime field of 
the field \/ $\Bbbk$. For each $q\in B$ 
let $W_q$ be a splitting of the chain complex 
$C_\bullet(\Delta_{\le q},\Delta_{<q},\mathbb F)$, and let 
$W=\sum 1\otimes W_q$ be the induced vector field on $\cx F$ 
compatible with the $B$-stratification. Let $d=\dim B$. 

Then $\Phi^{d+1}_W$ is a projection of $\cx F$ onto a free direct 
summand $\cx M_W$. Furthermore, $\cx M_W$ is a minimal 
$Q$-graded free resolution 
of the toric ring $\Bbbk[Q]$ over $\Bbbk[X]$.   
\end{theorem}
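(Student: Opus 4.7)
The plan is to apply Theorem~\ref{T:asymptotic} and Corollary~\ref{C:splittings-are-minimal} directly to the $B$-stratification on $\cx F$, taking $V_q = 1\otimes W_q$ for each $q\in B$. The single observation that makes everything go through is that $d = \dim B$ bounds $d_{P_n}(q)$ for every $q$ and every $n$, so $d+1$ exceeds all these local dimensions uniformly. Then Theorem~\ref{T:asymptotic}(7) gives $[\Phi_W^{d+2} - \Phi_W^{d+1}](\cx F(q)_n)=0$ for every $q$, and since every element of $F_n$ is a finite sum of elements lying in various $\cx F(q)_n$, one concludes $\Phi_W^{d+2} = \Phi_W^{d+1}$ on all of $\cx F$. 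This idempotence makes $\cx M_W = \im \Phi_W^{d+1}$ a direct summand of $\cx F$. Because $\Phi_W$ is chain homotopic to the identity, so is $\Phi_W^{d+1}$, and hence $\cx M_W$ is chain homotopy equivalent to $\cx F$, so it resolves $\Bbbk[Q]$ over $\Bbbk[X]$.

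For freeness I would combine parts~(6) and~(8) of Theorem~\ref{T:asymptotic} to obtain
\[
\cx M_{W,n} = \bigoplus_{q\in B}\Phi_W^{d_{P_n}(q)}(C^q_n),
\]
where each restriction $\Phi_W^{d_{P_n}(q)}\colon C^q_n \lra \Phi_W^{d_{P_n}(q)}(C^q_n)$ is a $\Bbbk[X]$-linear isomorphism, and $C^q_n$ is free because $W_q$ is a genuine (full) splitting, so its middle piece is the degree-$n$ homology of $C_\bullet(\Delta_{\le q},\Delta_{<q},\mathbb F)$ tensored up to $\Bbbk[X]$. Theorem~\ref{T:asymptotic} only asserts directness of the sum for indices below a fixed element of $B$, so I would upgrade this to the entire poset by using that $\Phi_W^k(C^q_n)\subseteq\bigoplus_{p\le q}F_n^p$ while the $C^q_n$-component of $\Phi_W^k$ on $C^q_n$ equals the identity, and then peeling off contributions from maximal elements of the support in any purported vanishing linear combination.

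For minimality the crucial point is that each $W_q$ is $\mathbb F$-linear on $C_\bullet(\Delta_{\le q},\Delta_{<q},\mathbb F)$, so $W=\sum 1\otimes W_q$ is $\Bbbk[X]$-linear and $Q$-graded of degree $0$, and hence so is $\Phi_W$. The generators of $C^q_n$ sit in $Q$-degree $q$, so the same holds for those of $\Phi_W^{d_{P_n}(q)}(C^q_n)$. Corollary~\ref{C:splittings-are-minimal} then shows that the restriction of $\phi$ to this summand lands in $\bigoplus_{p<q}\Phi_W^{d_{P_{n-1}}(p)}(C^p_{n-1})$, whose generators lie in strictly smaller $Q$-degrees. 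Since $\phi$ preserves $Q$-degree, the entries of the differential of $\cx M_W$ must then be monomials of strictly positive $Q$-degree, i.e.\ they lie in the maximal $Q$-graded ideal of $\Bbbk[X]$, which is the defining condition for a $Q$-graded minimal free resolution. The bookkeeping needed to extend the local direct sum statement of Theorem~\ref{T:asymptotic} across the full poset $B$ is the only mildly delicate step; everything else is a direct invocation of the general dynamical systems machinery already developed.
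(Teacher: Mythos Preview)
Your proposal is correct and follows essentially the same route as the paper: apply Theorem~\ref{T:asymptotic} to see that $\Phi_W^{d+1}$ is a projection, identify its image via the direct-sum description in part~(8), and invoke Corollary~\ref{C:splittings-are-minimal} together with the $Q$-grading to obtain minimality. The only notable difference is in the freeness step: the paper observes that $\cx M_W$ is a $Q$-graded projective direct summand of $\cx F$ and hence free (using that $Q$-graded projectives over $\Bbbk[X]$ are free), whereas you argue freeness directly by identifying each $C^q_n$ with $\Bbbk[X]\otimes_{\mathbb F}\HH_n\bigl(C_\bullet(\Delta_{\le q},\Delta_{<q},\mathbb F)\bigr)$. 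Your extra care in extending the local direct-sum statement of Theorem~\ref{T:asymptotic} to the whole (not necessarily bounded-above) poset $B$ is a point the paper's proof leaves implicit.
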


\begin{proof} 
By construction the flow $\Phi_W$ on $\cx F$ preserves the 
$Q$-grading, hence so does its iterate $\Phi^{d+1}_W$. Now 
$\Phi^{d+1}_W$ is a projection onto its image $\cx M_W$ by 
Theorem~\ref{T:asymptotic}, and, by the same theorem we 
have in homological degree $n$ that 
\[
(\cx M_W)_n = 
\bigoplus_{q\in B_n}
\Phi_W^{d(q)}(\Bbbk[X]\otimes_{\mathbb F} C^q_n). 
\] 
Thus $\cx M_W$ is chain homotopic to $\cx F$ and a $Q$-graded 
projective, hence free, direct summand of $\cx F$. In 
particular $\cx M_W$ is a $Q$-graded free resolution of $\Bbbk[Q]$ 
over $\Bbbk[X]$. 
Since elements of $\Phi_W^{d(q)}(\Bbbk[X]\otimes_{\mathbb F} C^q_n)$ 
are homogeneous of degree $q$, it is immediate from 
Corollary~\ref{C:splittings-are-minimal} that $\cx M_W$ is a 
minimal resolution as desired.    
\end{proof}

Next, we indicate how to make a ``universal'' choice for the  
splittings $W_q$ in Theorem~\ref{T:toric-minimal}. In characteristic 
zero we just take the splittings induced by taking Moore-Penrose 
pseudo-inverses. In characteristic $p>0$ 
the situation is more subtle, and we 
obtain universality only after passing to a certain purely 
transcendental extension of $\Bbbk$. The  main idea is, 
since in general 
there seems to be no canonical choice of a splitting over $\Bbbk$, 
we take the finitely many splittings over $\mathbb F_p$ and average 
them with generic weights to obtain a canonical splitting over a 
canonical transcendental extension of $\mathbb F_p$.   
In spite of its apparent simplicity, this approach provides an
esssential step in overcoming a previously unobserved obstruction
to such canonical constructions, see Remarks~\ref{R:toric-remarks}. 

The following result is an immediate consequence of 
Proposition~\ref{P:weak-partial-splitting}, 
Remarks~\ref{R:affine-combination} and~\ref{R:pseudo-inverses}, 
and Theorem~\ref{T:toric-minimal}.

\begin{theorem}\label{T:toric-canonical}
Let $\mathbb F$ be the prime subfield of $\Bbbk$. 

(a) 
Let $\mathbb F=\mathbb Q$. For each $q\in B$, 
the explicit $\mathbb Q$-basis of the homological degree $n$ component 
$C_n(\Delta_{\le q},\Delta_{<q},\mathbb Q)$ of 
$C_\bullet(\Delta_{\le q},\Delta_{<q},\mathbb Q)$,  
given by the sequences \eqref{E:morphism-sequence} with $q_n=q$,  
induces a canonical inner product for which this basis is orthonormal.
Taking $\delta_q^{+}$ to be the splitting given by the 
Moore-Penrose pseudo-inverses, see Remark~\ref{R:pseudo-inverses},  
of the differentials of 
$C_\bullet(\Delta_{\le q},\Delta_{<q},\mathbb Q)$
yields a canonical explicit vector 
field $\phi^{+}=\sum 1\otimes\delta_q^{+}$ on the chain complex $\cx F$, 
hence an explicit canonical minimal resolution $\cx M_{\phi^{+}}$ and 
an explicit canonical projection $\Phi_{\phi^{+}}^{d+1}$ of $\cx F$ onto 
$\cx M_{\phi^{+}}$.

(b)
Let \/ $\mathbb F=\mathbb F_p$ where $p\ge 2$ is a prime. For each 
$q\in B$ let $Spl_Q(q)$ be the (finite) set of all splittings of 
the chain complex $C_\bullet(\Delta_{\le q},\Delta_{<q},\mathbb F_p)$. 
Let $Y_q=\{y_\sigma \mid \sigma\in Spl_Q(q)\}$ be a set of indeterminates, 
and let $Y=\sqcup_{q\in B} Y_q$ be the disjoint union. Let 
$\mathbb F'=\mathbb F_p(Y)$ (repsectively, $\Bbbk'=\Bbbk(Y)$) 
be the purely transcendental extension 
of $\mathbb F_p$ (respectively, $\Bbbk$) on the set of variables $Y$.  
Then by Remark~\ref{R:affine-combination} 
\[
W_q = 
\frac{\sum_{\sigma\in Spl_Q(q)}\ y_{\sigma}\otimes \sigma}
     {\sum_{\sigma\in Spl_Q(q)}\ y_{\sigma}}
\]
is a canonical weak partial splitting of \/ 
$\mathbb F'\otimes C_\bullet(\Delta_{\le q},\Delta_{<q},\mathbb F_p)$, 
and taking $\widehat{W}_q$ to be the induced by 
Proposition~\ref{P:weak-partial-splitting}(bd)  canonical splitting 
yields a canonial vector 
field $W=\sum_{q\in B}1\otimes\widehat{W}_q$ on 
$\cx F'=\Bbbk'\otimes_\Bbbk\cx F$ hence a canonical minimal resolution 
$\cx M_W$ of $\Bbbk'[Q]$ over $\Bbbk'[X]$ and a canonical projection 
$\Phi_W^{d+1}$ of \/ $\cx F'$ onto $\cx M_W$.    
\end{theorem}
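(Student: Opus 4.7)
The plan is to specialize Theorem~\ref{T:toric-minimal} by feeding in canonical choices of stratum splittings; once these are in place, Theorem~\ref{T:toric-minimal} produces the resolution and projection automatically, so the task reduces to constructing the $W_q$ canonically and verifying the hypotheses.

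For part (a), I would first observe that the basis of $C_n(\Delta_{\le q},\Delta_{<q},\mathbb{Q})$ consisting of composable sequences \eqref{E:morphism-sequence} ending at $q$ depends only on the Betti category $\mathcal{B}(Q,\Bbbk)$, so declaring this basis orthonormal yields a canonical inner product. Remark~\ref{R:pseudo-inverses} then supplies, via the explicit polynomial formula \eqref{E:pseudo-inverse}, the Moore-Penrose splitting $\delta_q^{+}$ of $C_\bullet(\Delta_{\le q},\Delta_{<q},\mathbb{Q})$. Since $\delta_q^{+}$ is a splitting, $1\otimes\delta_q^{+}$ is a splitting of the stratum $\ol{\cx F(q)}=\Bbbk[X]\otimes_{\mathbb{Q}}C_\bullet(\Delta_{\le q},\Delta_{<q},\mathbb{Q})$. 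Proposition~\ref{P:stratification-induced}(c) then assembles these into the compatible vector field $\phi^{+}=\sum_{q\in B}1\otimes\delta_q^{+}$ on $\cx F$, and Theorem~\ref{T:toric-minimal} applied with $W_q=1\otimes\delta_q^{+}$ delivers the canonical minimal resolution $\cx M_{\phi^{+}}$ as the image of $\Phi_{\phi^{+}}^{d+1}$.

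For part (b), the first task is to confirm that $Spl_Q(q)$ is finite: the Betti poset $B$ is finite because $\Bbbk[Q]$ is a finitely generated module of finite projective dimension over $\Bbbk[X]$, so $\Delta_{\le q}$ involves only finitely many sequences and $C_\bullet(\Delta_{\le q},\Delta_{<q},\mathbb{F}_p)$ has finite-dimensional components; the set of linear maps between finite-dimensional $\mathbb{F}_p$-vector spaces is then finite. Next, the coefficients $y_\sigma/\sum_\tau y_\tau$ lie in $\mathbb{F}'=\mathbb{F}_p(Y)$ (the denominator is a nonzero polynomial, hence invertible) and sum to $1$. Since each $\sigma\in Spl_Q(q)$ satisfies \eqref{E:splitting}, Remark~\ref{R:affine-combination} implies that $W_q$ is a weak partial splitting of $\mathbb{F}'\otimes_{\mathbb{F}_p}C_\bullet(\Delta_{\le q},\Delta_{<q},\mathbb{F}_p)$ satisfying \eqref{E:splitting}, and Proposition~\ref{P:weak-partial-splitting}(b,d) then produces the associated canonical splitting $\widehat{W}_q$. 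Proposition~\ref{P:stratification-induced}(c) assembles $W=\sum_{q\in B}1\otimes\widehat{W}_q$ on $\cx F'=\Bbbk'\otimes_\Bbbk\cx F$, and Theorem~\ref{T:toric-minimal} applied over $\Bbbk'$ produces $\cx M_W$ and $\Phi_W^{d+1}$.

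The main subtlety lies in the base-field extension in part (b): since one cannot expect a single canonical splitting over $\Bbbk$ in positive characteristic (a phenomenon made explicit in the monomial case by Theorem~\ref{T:counterexample}), the averaging-with-generic-weights device is essentially forced, and the genericity of the $y_\sigma$ is precisely what guarantees invertibility of the denominator $\sum y_\sigma$ in all characteristics. Apart from this point, every step is a formal assembly of previously established tools, and canonicity is automatic because the Betti category, the subcomplexes $\Delta_{\le q}$, the set $Spl_Q(q)$, and the constructions of $\delta_q^{+}$ and $\widehat{W}_q$ are all intrinsic to the data $(Q,\Bbbk)$.
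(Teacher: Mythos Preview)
Your proposal is correct and follows the same route the paper indicates: the theorem is stated as an immediate consequence of Proposition~\ref{P:weak-partial-splitting}, Remarks~\ref{R:affine-combination} and~\ref{R:pseudo-inverses}, and Theorem~\ref{T:toric-minimal}, and your write-up simply unpacks exactly these ingredients. Your added details (finiteness of $Spl_Q(q)$, the use of Proposition~\ref{P:stratification-induced}(c) to assemble the vector field) are appropriate and consistent with how Theorem~\ref{T:toric-minimal} is set up.
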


For the remainder of this section we elaborate on how the canonical minimal 
resolutions constructed in Theorem~\ref{T:toric-canonical} respect 
the symmetries of the pointed affine semigroup $Q$. 
In order to be more specific, 
we will use the following notation, which will come in handy also 
in the next section. 

Let $\Bbbk$ be a commutative ring, let $S$ be a $\Bbbk$-algebra, and 
let $M$ be an $S$-module. Given a homomorphism $f\: R\lra S$ of 
$\Bbbk$-algebras, we write $f_*M$ for the induced $R$-module structure 
on $M$. Thus the underlying abelian groups of $M$ and $f_*M$ are the same
set, and the $R$-module structure on $f_*M$ is given for $r\in R$ and 
$m\in M$ by $r\cdot m=f(r)m$. Clearly, if $\phi\: M\lra N$ is a 
homomorphism of $S$-modules then the same map of underlying sets is 
also a homomorphism $\phi\: f_*M\lra f_*N$ of $R$-modules. 
Furthermore, when $f$ and $g$ are endomorphisms  
of $S$ then  $(fg)_*M = g_*(f_*M)$, and 
$g\: S \lra g_* S$ is a homomorphism of $S$-modules.  

Now let $G$ be a group of automorphisms of the monoid $Q$. 
From the combinatorial description of the Betti degrees of 
$Q$ over $\Bbbk$ it is immediate that the set $B$ is $G$-invariant.  
The action of $G$ on $Q$ lifts canonically to an action of $G$ as 
automorphisms on the semigroup algebra $\Bbbk[Q]$, and since 
$G$ acts as  permutations on the minimal generating set $A$ of $Q$, 
this induces a canonical action of $G$ on the set $X$, and hence on 
the polynomial ring $\Bbbk[X]$. For $\gamma\in G$ we abuse notation 
and denote by $\gamma$ also the induced automorphisms on $\Bbbk[Q]$ 
and $\Bbbk[X]$. We consider the $\Bbbk[X]$-modules $\gamma_*\Bbbk[X]$ 
and $\gamma_*\Bbbk[Q]$ as $Q$-graded via $\deg_Q x_a=\gamma^{-1}(a)$ 
and $\deg_Q q = \gamma^{-1}(q)$ for $a\in A$ and $q\in Q$. Then the 
isomorphisms $\gamma\: \Bbbk[X]\lra\gamma_*\Bbbk[X]$ and 
$\gamma\: \Bbbk[Q]\lra\gamma_*\Bbbk[Q]$ are isomorphisms of $Q$-graded 
$\Bbbk[X]$-modules. More generally, for any $Q$-graded 
$\Bbbk[X]$-module $M$ we consider the $Q$-grading on $\gamma_*M$ 
where every homogeneous element of degree $q$ in $M$ is  
homogeneous of degree $\gamma^{-1}(q)$ in $\gamma_*M$.   

Next, the action of $G$ on $B$ and on $\Bbbk[X]$ 
induces an action on the Betti category of $Q$ over $\Bbbk$, and 
under this action the sequence \eqref{E:morphism-sequence} gets sent 
by $\gamma\in G$ to 
\[
\gamma(q_0)\xrightarrow{\gamma(m_1)} \gamma(q_1)  
\xrightarrow{\gamma(m_2)} \dots \xrightarrow{\gamma(m_{n-1})} 
\gamma(q_{n-1})\xrightarrow{\gamma(m_n)}\gamma(q_n). 
\]
Therefore $\gamma$ lifts to an isomorphism 
$
\gamma\: 
C_\bullet(\Delta_{\le q},\Delta_{<q},\mathbb F) \lra 
C_\bullet(\Delta_{\le\gamma(q)},\Delta_{<\gamma(q)},\mathbb F)
$.  
In characteristic zero this sends orthonormal bases to orthonormal 
bases, hence takes $\delta_q^{+}$ to $\delta_{\gamma(q)}^{+}$.   
In characteristic $p>0$ this 
produces a bijection of sets 
of splittings $\gamma\: Spl_Q(q)\lra Spl_Q(\gamma(q))$, and   
therefore a $G$-action on the set $Y$, 
hence also a field automorphism 
$\gamma$ of $\Bbbk'$ over $\Bbbk$; which in turn induces a 
$\Bbbk$-algebra automorphism $\gamma$ of $\Bbbk'[X]$. 
Since the module $F_n$ (respectively, $\Bbbk'\otimes_\Bbbk F_n$) 
is free over $\Bbbk[X]$ (respectively, 
$\Bbbk'[X]$) with basis all sequences \eqref{E:morphism-sequence},  
the actions of $G$ on these sequences and on $\Bbbk[X]$ (respectively, 
$\Bbbk'[X]$) induce an action on $F_n$ (respectively, 
$F'_n=\Bbbk'\otimes_\Bbbk F_n$) where $\gamma\in G$  
sends a linear combination  
$\sum c_b b$ of sequences $b$ with coefficients $c_b\in\Bbbk[X]$ 
(respectively, $c_b\in\Bbbk'[X]$) to the linear combination 
$\sum \gamma(c_b)\gamma(b)$. It is immediate that this produces 
for each $\gamma\in G$ a $\Bbbk$-automorphism $\cx F(\gamma)$ 
(respectively, $\cx F'(\gamma)$) of 
the resolution $\cx F$ (respectively, $\cx F'$) 
such that $\cx F(\gamma)\: \cx F\lra\gamma_*\cx F$ 
(respectively, $\cx F'(\gamma)\: \cx F'\lra\gamma_*\cx F'$)  
is an isomorphism of complexes of free $Q$-graded $\Bbbk[X]$-modules 
(respectively, $\Bbbk'[X]$-modules). Furthermore, it is straightforward 
to verify that $\cx F(\gamma)$ (respectively, $\cx F'(\gamma)$)
sends each $\cx F(q)$ to $\cx F(\gamma(q))$ 
(respectively, $\cx F'(q)$ to $\cx F'(\gamma(q))$),  
and commutes with the vector fields $\phi^{+}$ in characteristic zero   
(respectively, $W$ in characteristic $p>0$).       
It follows that $\cx F(\gamma)$ (respectively, $\cx F'(\gamma)$)
commutes with the flow $\Phi_{\phi^{+}}$ 
(respectively, $\Phi_W$) therefore it induces a $\Bbbk$-automorphism 
$\cx M(\gamma)$ on the minimal resolution $\cx M_{\phi^+}$ (respectively, 
$\cx M_W$).

Summarizing the discussion above, we have proved the following 
result.

\begin{theorem}\label{T:toric-intrinsic} 
Let $Q$ be a pointed affine semigroup with group of automorphisms $G$, 
let $\Bbbk$ be a field, and let the set of variables $X$ be as above. 

Then there exist 
\begin{itemize}
\item
a canonical explicitly constructed finitely generated 
field extension $\Bbbk'$ of \/ $\Bbbk$, 

\item 
a canonical explicitly constructed minimal free $Q$-graded resolution 
$\cx M$ of $\Bbbk'[Q]$ over $\Bbbk'[X]$, 

\item 
a canonical explicitly constructed 
action of $G$ as automorphisms of \/ $\Bbbk'$ over $\Bbbk$, and 

\item 
a canonical explicitly constructed homomorphism 
$\cx M\: G\lra Aut_\Bbbk(\cx M)$ of $G$ to the group of automorphisms 
of the chain complex of \/ $\Bbbk$-vector spaces $\cx M$;       
\end{itemize}
such that for each $\gamma\in G$ the map 
$\cx M(\gamma)\: \cx M \lra \gamma_*\cx M$ is an isomorphism of 
complexes of free $Q$-graded $\Bbbk'[X]$-modules that  lifts 
the isomorphism of $Q$-graded $\Bbbk'[X]$-modules  
$\gamma\: \Bbbk'[Q]\lra \gamma_*\Bbbk'[Q]$.  In characteristic zero 
one can take $\Bbbk'=\Bbbk$, and in characteristic $p>0$ one can 
take $\Bbbk'$ to be a purely transcendental extension of $\Bbbk$. 
\end{theorem}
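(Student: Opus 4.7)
The plan is to organize the discussion preceding the theorem into a verification that every piece of the construction of $\cx M_{\phi^+}$ (in characteristic zero) and $\cx M_W$ (in positive characteristic) is $G$-equivariant. Since the existence, minimality, and freeness of these resolutions are already established by Theorem~\ref{T:toric-canonical} and Theorem~\ref{T:toric-minimal}, all that remains is the $G$-equivariance statement, which I would carry out by propagating the $G$-action through each layer of the construction.

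First, I would pin down a chain of induced $G$-actions: from the given action on $Q$, pass to actions on the minimal generating set $A$ (hence on the variable set $X$ and on $\Bbbk[X]$), on the semigroup algebra $\Bbbk[Q]$, on the set of Betti degrees $B$, and on the Betti category $\mc B$. Acting on basis sequences then yields $\Bbbk$-automorphisms $\cx F(\gamma)\: \cx F \lra \gamma_*\cx F$ of $Q$-graded complexes over $\Bbbk[X]$, where the grading convention from the preceding discussion shifts $q$-graded pieces to $\gamma^{-1}(q)$-graded pieces under $\gamma_*$. These $\cx F(\gamma)$ permute the strata by $\cx F(q)\mapsto\cx F(\gamma(q))$ and induce $\mathbb F$-linear isomorphisms $C_\bullet(\Delta_{\le q},\Delta_{<q},\mathbb F)\lra C_\bullet(\Delta_{\le\gamma(q)},\Delta_{<\gamma(q)},\mathbb F)$ that send basis to basis.

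In characteristic zero, set $\Bbbk'=\Bbbk$. Because the canonical inner products are those making the explicit basis orthonormal, the basis-preserving isomorphisms above are isometries, so the Moore-Penrose pseudoinverses commute with them. Hence $\cx F(\gamma)$ commutes with $\phi^+$, and therefore with $\Phi_{\phi^+}$ and all its iterates. In positive characteristic the crux is the construction of $\Bbbk'$: the $G$-equivariant isomorphisms above yield bijections $\gamma\: Spl_Q(q)\lra Spl_Q(\gamma(q))$ of the (finite) sets of splittings, hence a permutation action of $G$ on $Y=\sqcup_q Y_q$. This lifts to a field automorphism action of $G$ on $\Bbbk'=\Bbbk(Y)$ over $\Bbbk$, which extends the action on $\Bbbk[X]$ to a $\Bbbk$-algebra action on $\Bbbk'[X]$ and, by tensoring, on $\cx F'=\Bbbk'\otimes_\Bbbk\cx F$. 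Inspection of the formula for $W_q$ shows that permuting the $y_\sigma$ and the $\sigma$ simultaneously transports $W_q$ to $W_{\gamma(q)}$; by Proposition~\ref{P:weak-partial-splitting} the same equivariance passes to $\widehat W_q$, and hence $\cx F'(\gamma)$ commutes with $W$ and with its flow.

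In either case the stable iterate $\Phi^{d+1}$ is a $G$-equivariant projection onto $\cx M$, so the restriction of $\cx F(\gamma)$ (resp.\ $\cx F'(\gamma)$) gives the desired isomorphism $\cx M(\gamma)\: \cx M \lra \gamma_*\cx M$; that it lifts $\gamma\: \Bbbk'[Q]\lra\gamma_*\Bbbk'[Q]$ is automatic because the augmentation $\cx F\twoheadrightarrow\Bbbk[Q]$ is $G$-equivariant by construction. The main obstacle I anticipate is bookkeeping: carefully tracking the $\gamma^{-1}$ shift in the $\gamma_*$ convention through each stage, and verifying that the $G$-action on $Y$ is a genuine group action (so that in particular $\cx M$ carries a true homomorphism from $G$, not merely a set-theoretic cocycle). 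Both reduce to the observation that the bijections $Spl_Q(q)\lra Spl_Q(\gamma(q))$ are choice-free, being induced canonically by $\gamma$ itself.
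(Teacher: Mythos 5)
Your proposal matches the paper's own argument, which is exactly the discussion preceding the theorem statement: propagate the $G$-action through $A$, $X$, $\Bbbk[X]$, $B$, and $\mathcal B$, obtain equivariant isomorphisms of strata, deduce commutation of $\cx F(\gamma)$ (or $\cx F'(\gamma)$) with the canonical vector field and its flow, and restrict to the stable image. The only difference is cosmetic — you make the equivariance of the augmentation explicit and flag the $\gamma^{-1}$-shift as a concern to check, both of which the paper handles silently — so this is the same proof.
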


\begin{remarks}\label{R:toric-remarks} 
(a) 
The theorem above shows that our canonical construction of 
the minimal resolution $\cx M$ uses only \emph{intrinsic} properties of $Q$, 
in the sense that it is canonically invariant under the group of symmetries 
of $Q$.  

(b) 
It is natural to ask whether in positive characteristic it is 
always possible 
to have an intrinsic construction with $\Bbbk'=\Bbbk$. At this point 
we do not have an example to the contrary. However, a similar question 
arises in the closely related case of monomial ideals, 
where we show in Theorem~\ref{T:counterexample} that in positive 
characteristic  one must have, in general, $\Bbbk'$ to be at least a 
transcendental extension of $\Bbbk$. This leads us to conjecture 
that in the characteristic $p>0$ toric case, in general, 
for an intrinsic construction of a minimal resolution one needs 
also $\Bbbk'$ to be at 
least a transcendental extension of $\Bbbk$. We will observe later 
in Remark~\ref{R:monomial-remarks}(g) that 
it is in fact possible to avoid extending the base field in all 
but finitely many positive characteristics.

(c) 
We produced our minimal resolution by starting with the canonical 
resolution $\cx F$ from \cite{TV1} that uses the normalized 
bar resolution of the Betti category of $Q$, and then constructing 
$\cx M$ as an explicit direct summand of $\cx F$. It should be clear 
by now to the reader that one could start with any other canonical 
resolution of $\Bbbk[Q]$ and proceed in an analogous fashion. For 
example, one could start with the resolution from \cite{TV1} that 
comes from the normalized bar resolution of the lub-category of $Q$, 
or, one could start with the hull resolution of $\Bbbk[Q]$, see 
\cite{BS}. 

(d)
It should be noted that, in characteristic $p>0$, the 
field extension $\Bbbk'$ 
that our method needs for the intrinsic construction will depend 
on the starting canonical resolution $\cx F$. Furthermore, 
in general the number of splittings of a nontrivial 
chain complex over $\mathbb F_p$ is quite large, which causes 
the transcendence degree of $\Bbbk'$ over $\Bbbk$ to be quite high. 
It is therefore an 
interesting problem, given $Q$, to produce a canonical starting 
resolution of $\Bbbk[Q]$ that would lead to a ``smallest'' possible 
extension $\Bbbk'$ in characteristic $p>0$.     

(e) 
It is possible, by taking into account orbits of the action of 
$G$ on the sets of splittings $Spl_Q(q)$, and some other adjustments, 
to produce versions of 
Theorem~\ref{T:toric-canonical} that in positive characteristic 
yield a field extension $\Bbbk'$ over $\Bbbk$ of a possibly smaller 
but in general  
still nonzero transcendence degree . We discuss some of these 
``other adjustments'' in the example below, and incorporate 
their precise formulation into the statements of our results 
on monomial ideals in the next section. 
\end{remarks}

\begin{example}\label{Ex:toric-minimal}  
We will use the following homologically very simple example to illustrate 
our construction.  
Let $Q$ be the numeric semigroup generated in $\mathbb N$ by $2$ and $3$. 
Note that the group of automorphisms of $Q$ is trivial. 
The corresponding toric ring $\Bbbk[Q]$ has as set of Betti degrees 
$B=\{0, 6\}$, and the corresponding polynomial ring $R=\Bbbk[x_2, x_3]$
has exactly two monomials of degree $6$, namely $x_2^3$ and $x_3^2$.  Thus 
in the Betti category there are  
only two sequences \eqref{E:morphism-sequence} of length $1$, 
namely 
\[
0\xrightarrow{x_2^3} 6, \qquad\text{ and }\qquad 
0\xrightarrow{x_3^2} 6.  
\] 
Of course, we also have two trivial sequences of length $0$, namely 
$0$ and $6$. 
The resulting resolution $\cx F$ has the form  
\[
0\lla R^2 
\xleftarrow{
\left[
\begin{smallmatrix}
-x_2^3 & -x_3^2 \\ 
1     &  1
\end{smallmatrix}
\right]
}
R^2 \lla 0.   
\]
The simplicial chain complexes of the pairs of semi-simplicial sets 
$(\Delta_{\le 6},\Delta_{<6})$ and $(\Delta_{\le 0},\Delta_{<0})$ have the 
form (over the prime field $\mathbb F$) 
\[
0\lla \mathbb F 
\xleftarrow{
\left[
\begin{smallmatrix}
1     &  1
\end{smallmatrix}
\right]
}
\mathbb F^2 \lla 0,    
\qquad\text{ and }\qquad 
0\lla \mathbb F \lla 0,    
\] 
respectively. 

(a) 
In characteristic zero, using formula \eqref{E:pseudo-inverse} gives 
\[
0\lra \mathbb F 
\xrightarrow{
\left[
\begin{smallmatrix}
1/2 \\ 
1/2
\end{smallmatrix}
\right]
}
\mathbb F^2 \lra 0,    
\]
for the Moore-Penrose splitting of 
$C_\bullet(\Delta_{\le 6},\Delta_{<6}, \mathbb F)$. The resulting vector 
field $W$ on $\cx F$ becomes 
\[
0\lra R^2 
\xrightarrow{
\left[
\begin{smallmatrix}
 0 & 1/2 \\ 
 0 & 1/2
\end{smallmatrix}
\right]
}
R^2 \lra 0,  
\]
hence the flow $\Phi_W$ is the morphism of chain complexes 
\[
\begin{CD} 
0 @<{\hphantom{\left[
\begin{smallmatrix}
1 &  (1/2)(x_2^3+x_3^2) \\
0 &  0 
\end{smallmatrix}
\right]}}<<  
R^2 
@<<<  
R^2  
@<{\hphantom{
\left[
\begin{smallmatrix}
1 &  (1/2)(x_2^3+x_3^2) \\
0 &  0 
\end{smallmatrix}
\right]
}}
<<  0  \\
@.     
@V{ 
\left[
\begin{smallmatrix}
1 &  (1/2)(x_2^3+x_3^2) \\
0 &  0 
\end{smallmatrix}
\right]}
VV     
@VV{  
\left[
\begin{smallmatrix}
 1/2  &  -1/2 \\
-1/2  &   1/2 
\end{smallmatrix}
\right]}
V     
@. \\
0 
@<{\hphantom{ 
\left[
\begin{smallmatrix}
1 &  (1/2)(x_2^3+x_3^2) \\
0 &  0 
\end{smallmatrix}
\right]
}}<<  
R^2 
@<<<  
R^2  
@<{\hphantom{
\left[
\begin{smallmatrix}
1 &  (1/2)(x_2^3+x_3^2) \\
0 &  0 
\end{smallmatrix}
\right]
}}<< 0.   
\end{CD}
\]
In this case $\Phi_W$ is already a projection and we see that  
our canonical minimal resolution $\cx M=\im\Phi_W$ is the
subcomplex of $\cx F$ 
generated by the sequence $0$ in homological degree $0$, and 
by the element 
$(0\xrightarrow{x_2^3} 6) - (0\xrightarrow{x_3^2}6)$ 
in homological degree $1$. Thus $\cx M$ has the form
\[
0 \lla R \xleftarrow{x_3^2 - x_2^3} R \lla 0
\]
with respect to this basis.

(b) 
In characteristic $p=2$ the complex 
$C_\bullet(\Delta_{\le 6}, \Delta_{<6}, \mathbb F)$ has exactly $2$ 
splittings, given by  
\[
0\lra \mathbb F 
\xrightarrow[\alpha]{
\left[
\begin{smallmatrix}
1     \\  
0
\end{smallmatrix}
\right]
}
\mathbb F^2 \lra 0,
\qquad\text{ and }\qquad     
0\lra \mathbb F 
\xrightarrow[\beta]{
\left[
\begin{smallmatrix}
0     \\  
1
\end{smallmatrix}
\right]
}
\mathbb F^2 \lra 0.     
\]
Thus we get $Y_0=\{y_0\}$ and $Y_6=\{y_\alpha, y_\beta\}$. 
Instead of taking the purely transcendental extension over $\Bbbk$ 
on the variables $y_0,y_\alpha, y_\beta$, we can ``adjust'' by taking 
$\Bbbk'$ to be the fraction field of 
$\Bbbk[y_0,y_\alpha,y_\beta]/(y_\alpha+y_\beta-1, y_0-1)$, where 
$\Bbbk[y_0, y_\alpha, y_\beta]$ 
is the polynomial ring on the variables $y_0,y_\alpha, y_\beta$,  
and we denote by $y_0,y_\alpha$ and $y_\beta$ also their images 
in $\Bbbk'$. In particular, 
$\Bbbk'$ is a purely transcendental extension of $\Bbbk$ of degree $1$. 
Let $R'=\Bbbk'[X]$ and let $\cx F'=\cx F\otimes_R R'$. 
Now since $y_\alpha + y_\beta = 1$, we get by 
Remark~\ref{R:affine-combination} that 
$\alpha\otimes y_\alpha + \beta\otimes y_\beta$ is a weak partial 
splitting on 
$\ol{\cx F'(6)} = C_\bullet(\Delta_{\le 6},\Delta_{<6},\mathbb F)\otimes R'$ 
that satisfies \eqref{E:splitting}, hence we obtain from 
Proposition~\ref{P:weak-partial-splitting}(cd) that it is in fact  
already a splitting.  Thus we obtain on $\cx F'$ the vector field $W$  
\[
0\lra R'^2 
\xrightarrow{
\left[
\begin{smallmatrix}
 0 & y_\alpha \\ 
 0 & y_\beta
\end{smallmatrix}
\right]
} 
R'^2 \lra 0,  
\]
hence the flow $\Phi_W$ is the morphism of chain complexes 
\[
\begin{CD} 
0 @<{\hphantom{\left[
\begin{smallmatrix}
1 &  y_\alpha x_2^3+ y_\beta x_3^2 \\
0 &  0 
\end{smallmatrix}
\right]}}<<  
{R'}^2 
@<<<  
{R'}^2  
@<{\hphantom{
\left[
\begin{smallmatrix}
1 &  y_\alpha x_2^3+ y_\beta x_3^2) \\
0 &  0 
\end{smallmatrix}
\right]
}}
<<  0  \\
@.     
@V{ 
\left[
\begin{smallmatrix}
1 &  y_\alpha x_2^3+ y_\beta x_3^2 \\
0 &  0 
\end{smallmatrix}
\right]}
VV     
@VV{  
\left[
\begin{smallmatrix}
 y_\beta  &  -y_\alpha \\
-y_\beta  &   y_\alpha 
\end{smallmatrix}
\right]}
V     
@. \\
0 
@<{\hphantom{ 
\left[
\begin{smallmatrix}
1 &  (1/2)(x_2^3+x_3^2) \\
0 &  0 
\end{smallmatrix}
\right]
}}<<  
{R'}^2 
@<<<  
{R'}^2  
@<{\hphantom{
\left[
\begin{smallmatrix}
1 &  (1/2)(x_2^3+x_3^2) \\
0 &  0 
\end{smallmatrix}
\right]
}}<< 0   
\end{CD}
\]
Just as in part (a), in this case $\Phi_W$ is again already a 
projection, and we obtain the intrinsic minimal resolution $\cx M$ as the 
subcomplex of $\cx F'$ generated over $R'$ again by the trivial sequence 
$0$ in homological degree $0$, and by 
$(0\xrightarrow{x_2^3}6) - (0\xrightarrow{x_3^2}6)$ 
in homological degree $1$. Thus $\cx M$ has the form
\[
0 \lla R' \xleftarrow{x_3^2 - x_2^3} R' \lla 0  
\]
with respect to this basis. 
\end{example}

\section{Monomial resolutions}\label{S:monomial-resolutions}

Let $X$ be a finite set, and let 
$
M(X) = 
\{
f\mid f\:X\lra \mathbb N \text{ is a function of sets}\}
$
be the free abelian monoid with basis the set $X$. 
Note that $M(X)$ is partially ordered by $f\le g \iff f(x)\le g(x)$ 
for every $x\in X$. 
A \emph{monomial ideal} is an ideal $I$ in the monoid 
$M(X)$. Let $G_I$ be the group of all automorphisms of 
the monoid $M(X)$ that preserve the ideal $I$. 
The properties of $I$ invariant under the action 
of $G_I$ are precisely the \emph{intrinsic} properties 
of $I$ as an ideal of the monoid $M(X)$.  

Given a field $\Bbbk$ we identify the monoid 
ring $\Bbbk[M(X)]$ with the polynomial ring $\Bbbk[X]$ 
on the set of variables $X$ via the assignment 
$f \longmapsto \prod_{x\in X}x^{f(x)}$. Under this 
identification the monomial ideal $I\subseteq M(X)$ 
generates an ideal (also denoted by $I$ and called a 
monomial ideal) in the ring $R=\Bbbk[X]$.    
The polynomial ring $R$ has a natural $\mathbb Z$-grading, 
and the monomial ideal $I$ is a homogeneous ideal
of $R$. 

Since the work of Taylor~\cite{T} it has been a main  
open problem in commutative algebra to give an explicit 
canonical description of the minimal $\mathbb Z$-graded 
free resolution of $I$ over $R$ by using only intrinsic 
properties of the ideal $I$. 
As any automorphism in $G_I$ 
is induced by a permutation on the set $X$, 
we can identify $G_I$ with  
the set of those permutations on $X$ that 
leave the set of monomials in $I$ invariant. Any such 
permutation $\sigma$ induces an automorphism of the 
$\mathbb Z$-graded $\Bbbk$-algebra $R$, denoted also 
by $\sigma$, that leaves the ideal $I$ of $R$ invariant.  
Therefore, if a canonical construction of a minimal 
free resolution $\cx M$ of $I$ uses only intrinsic 
properties of $I$ it must be invariant under the 
action of $G_I$, or, more specifically, there has to be a 
canonical group homomorphism $\cx M\: G_I\lra Aut_\Bbbk(\cx M)$  
from $G_I$ to the group of automorphisms of $\cx M$ (as a complex of 
$\Bbbk$-vector spaces) such that for each $\sigma$ 
the morphism $\cx M(\sigma)\:\cx M \lra \sigma_*\cx M$ is 
a morphism of chain complexes of $R$-modules that 
lifts the isomorphism of $R$-modules 
$\sigma\: I\lra \sigma_*I$. 

With this in mind, and considering 
the results we have already obtained in the previous section on 
the closely related case of toric rings, 
we investigate the following 
(substantially weaker than normally considered)    
version of the problem of constructing explicitly the minimal 
free resolution of a monomial ideal.   
(Recall that for a $\mathbb Z$-graded module 
$E=\bigoplus_{n\in\mathbb Z} E_n$ over a $\mathbb Z$-graded ring $S$ 
the \emph{twisted} module $E(k)$ is the $\mathbb Z$-graded $S$-module 
with homogeneous components $E(k)_n=E_{k+n}$.)

\begin{problem}\mlabel{P:mfr}
Let $I$ be a monomial ideal in $M(X)$ and let 
$R=\Bbbk[X]$ be the polynomial ring with the 
standard $\mathbb Z$-grading,
where $\Bbbk$ is a field and $X$ is a 
finite set of variables. 
Let $\m$ be the homogeneous maximal ideal of $R$ generated
by the variables. Construct in a
canonical explicit way 
a field extension $\Bbbk'$ of \/ $\Bbbk$, an action of $G_I$ as 
field automorphisms of \/ $\Bbbk'$ over $\Bbbk$, 
an acyclic chain complex 
\[
\cx M \quad = \quad  
0 \lla M_0 \lla \dots \lla 
       M_{n-1} \xleftarrow{\mu_n} M_n \lla\dots
\]
of finite free $R'=\Bbbk'[X]$-modules with 
$\mu_n(M_n)\subseteq \m M_{n-1}$ for each $n\ge 1$, 
{\bf and}  a group homomorphism 
\[
\cx M\: G_I \lra Aut_\Bbbk(\cx M)
\]
from $G_I$ to the group of \/ $\Bbbk$-automorphisms of the 
chain complex of \/ $\Bbbk$-vector spaces $\cx M$ 
such that: 
\begin{enumerate}
\item 
there exist $R'$-module isomorphisms 
$\epsilon\:\HH_0\cx M \lra IR'$ and 
\[
\theta_n\: 
M_n\lra \bigoplus_{d\in\mathbb Z} R'(-d)^{\beta_{n,d}}
\] 
for all $n\ge 0$, such that for $n\ge 1$ each map 
\[
\theta_{n-1}\mu_n\theta_n^{-1}\:
\bigoplus_{d\in\mathbb Z} R'(-d)^{\beta_{n,d}} \lra 
\bigoplus_{d\in\mathbb Z} R'(-d)^{\beta_{n-1,d}}
\]
is a morphism of $\mathbb Z$-graded $R'$-modules, and,  
with respect to the corresponding induced 
$\mathbb Z$-grading  on $\HH_0\cx M$, so is the 
map $\epsilon$.

\item 
there exists an $R'$-module isomorphism 
$\delta\: \HH_0\cx M\lra IR'$ such that for each 
$\sigma\in G_I$ the automorphism 
\[
\cx M(\sigma)\: \cx M \lra \sigma_*\cx M
\]
is a morphism of chain complexes of $R'$-modules, 
and the induced isomorphism 
$\bar\sigma\: \HH_0\cx M \lra \sigma_*\HH_0\cx M$ 
satisfies $\sigma\delta = \delta\bar\sigma$. 
\end{enumerate} 
\end{problem}

\begin{remark}\label{R:weaker-formulation}
Notice that we do {\bf not} ask for 
\begin{itemize} 
\item
a $M(X)$-graded structure on $\cx M$, i.e. we do not 
require a monomial-, fine-, or multi-grading on $\cx M$;  

\item 
an explicit or canonical construction of the 
isomorphisms $\epsilon$, and $\theta_n$, i.e. 
of the $\mathbb Z$-graded structure of each $M_n$;   
in particular, we do {\bf not} ask for a canonical or explicit 
construction of a homogeneous basis of each $M_n$; 

\item 
an explicit or canonical construction of the 
isomorphism $\delta$, i.e. of an identification 
of the zeroth homology of $\cx M$ with the ideal $IR'$;   

\item 
the automorphisms $\cx M(\sigma)$ to preserve any  
$\mathbb Z$-graded structure of $\cx M$. 
\end{itemize}  
In fact we will show later in Theorem~\ref{T:counterexample}
that in characteristic $p>0$ our problem has, in general, 
no solution with 
$\Bbbk=\mathbb F_p$ and $\Bbbk'$ an algebraic extension, 
even in this very weak formulation. 
\end{remark}

In characteristic zero, a solution 
goes back at least to the work of Yuzvinsky~\cite{Y}. 
It is given as a special case of the main result
\cite[Theorem~4.1]{Y}, by taking the splitting 
$W_a$ of each 
complex $\ol{\cx T(a)}$ in Example~\ref{Ex:Taylor-4} to be 
$\delta_a^{+}$ as in Example~\ref{Ex:Taylor-0}, 
the splitting given by taking Moore-Penrose pseudo-inverses. 
Yuzvinsky's construction produces the minimal resolution $\cx M$ 
as a subcomplex of the Taylor resolution, and as indicated 
in Examples~\ref{Ex:Taylor-3} and~\ref{Ex:Taylor-4}, 
Yuzvinsky's proof can be leveraged 
to also produce an explicit projection of the Taylor resolution 
onto $\cx M$. 

In their recent paper \cite{EMO}, Eagon, Miller, and Ordog leverage   
their ingenious combinatorial description (the ``Hedge formula'') 
of Moore-Penrose pseudo-inverses of differentials 
in cellular chain complexes, 
and Eagon's construction \cite{E} of a Wall complex, applied to a 
certain double complex, to 
give another solution in characteristic zero that has a strong 
combinatorial component, and works also in all but finitely many 
positive characteristics. 

In the rest of this section we use 
our dynamical systems theory and 
present a solution to Problem~\ref{P:mfr} that works in all 
characteristics. The construction keeps the base field unchanged 
in all except certain finitely many positive characteristics 
(which depend on the ideal), where it produces a transcendental 
base field extension before obtaining the resolution.  
Our motivation comes from the influential work \cite{GPW}, 
where Gasharov, Peeva, and Welker show 
that the isomorphism class of the lcm-lattice of a monomial 
ideal determines the structure of the minimal resolution, in 
the sense that an isomophism between the lcm-lattices of two 
ideals determines a canonical ``relabeling'' procedure that 
transforms a minimal resolution of one ideal into a minimal
resolution of the other. It is therefore natural to ask whether
it is possible to construct an explicit canonical 
minimal resolution of a monomial 
ideal directly from its lcm-lattice in an intrinsic fashion.  
We will now describe how to accomplish this. 

Let $U$ be the set of minimal generators of our monomial ideal 
$I$ in $M(X)$. Recall from Example~\ref{Ex:Taylor-1} that the 
lcm-lattice $L$ of $I$ is the join semi-lattice join-generated in $M(X)$ 
by the set $U$. Equivalently, representing the elements of $M(X)$ 
as monomials in the polynomial ring $\Bbbk[X]$ via the 
identification $f\mapsto m_f=x^f = \prod_{y\in X}y^{f(y)}$, 
the lcm-lattice $L$ is represented by all monomials that are  
least common multiples of subsets of 
the set of monomials $\{m_u\mid u\in U\}$. In this representation 
the partial order on $L$ is precisely the one given by 
divisibility of monomials. 

Next, set $L'=L\setminus\{1\}$, obtained by removing from $L$ its 
minimal element $\hat 0 = 1$, and 
consider the order simplicial complex $\Delta=\Delta(L')$ 
of the poset $L'$. The $n$-faces of $\Delta$ are given 
by all chains  
\begin{equation}\label{E:n-faces}
a_0< \dots < a_n
\end{equation} 
in $L'$, in particular  
they have a natural orientation induced by the ordering on their vertices. 
Thus we obtain canonical bases for the spaces of oriented $n$-chains  
$C_n(\Delta, \mathbb F)$ and 
$
C_n(a) := C_n(\Delta_{\le a}, \Delta_{<a}, \mathbb F)=
C_n(\Delta_{\le a},\mathbb F)/C_n(\Delta_{<a},\mathbb F)
$, 
where $\mathbb F$ is the prime field of $\Bbbk$, and the 
abstract simplicial complex 
$\Delta_{\le a}$ (respectively, $\Delta_{<a}$) consists of all faces 
\eqref{E:n-faces} of $\Delta$ with $a_n\le a$ (resectively, $a_n<a$).   
In particular, in charateristic zero we obtain a natural inner product 
on these spaces that makes these bases orthonormal. 
Choosing a subset $X_i$ of this canonical basis of 
$C_i(a)=C_i(\Delta_{\le a}, \Delta_{<a},\mathbb F)$ such that it maps under 
the differential $d_i^a$ of the simplicial chain complex 
$C_\bullet(\Delta_{\le a},\Delta_{<a},\mathbb F)$ 
of the pair $(\Delta_{\le a},\Delta_{<a})$ 
bijectively onto a basis of $\im d_i^a$ induces a unique map 
$\tilde d_i^a\: \im d_i^a\lra C_i(a)$ 
satisfying $\tilde d_i^a d_i^a\big\vert_{X_i}=\id_{X_i}$. Furthermore, for each 
canonical basis element $b$ of $C_i(a)$ 
the unique expression $d_i^a(b)=\sum_{c\in X_i}r_cd_i^a(c)$ produces a  
canonical $i$-cycle $z(b)=z_{X_i}(b)=b-\sum_{c\in X_i}r_cc$ in $\Ker d_i^a$. 
It is a routine exercise in linear algebra to show that the set 
$K_i=\{z(b)\mid b\notin X_i\}$ forms a basis of $\Ker d_i^a$, therefore 
any subset $Z_i$ of $K_i$ that maps bijectively to a basis of 
$H_i(a)=\HH_i(\Delta_{\le a},\Delta_{<a},\mathbb F)$ under the canonical 
projection $\pi_i\: \Ker d_i^a \lra H_i(a)$ induces a canonical map 
$\tilde\pi_i\: H_i(a)\lra C_i(a)$ 
satisfying $\tilde\pi_i \pi_i\big\vert_{Z_i}=\id_{Z_i}$. Therefore choosing 
such a pair $(X_i,Z_i)$ for each $i\ge 0$ induces canonically a splitting of 
$C_\bullet(\Delta_{\le a},\Delta_{<a},\mathbb F)$, and we call such a splitting 
a \emph{matroidal splitting}.         
Let $MSpl(a)$ be the set of all matroidal splittings of 
$C_\bullet(\Delta_{\le a}, \Delta_{<a}, \mathbb F)$. 
Clearly 
\begin{equation}\label{E:matroidal-bound}
m(a):= |MSpl(a)| \le 
\prod_{i\ge 0}\binom{\dim C_i(a)}{\rank d_i^a}
            \binom{\dim C_i(a)-\rank d_i^a}{\dim H_i(a)}. 
\end{equation}
We call $a\in L'$ \emph{critical} over $\Bbbk$ if $char(\Bbbk)$ divides 
$m(a):= |MSpl(a)|$. We write $L_{crit}'$ for the set of critical elements 
of $L'$ (over $\Bbbk$). In particular, if $char(\Bbbk)=0$ then 
$L'_{crit}=\emptyset$, i.e. there are no critical elements.    
For each $a\in L'_{crit}$  we let  
\[
Y_a=\{y_\sigma \mid \sigma\in MSpl(a)\}
\]
be a set of variables.  
Let $Y=\sqcup_{a\in L'_{crit}}Y_a$ be the disjoint union. 
Note that $Y$ is finite. 
We set 
\begin{equation}\label{E:field-extension}
\Bbbk' = 
\Bbbk(Y)^{\widetilde{}} 
\end{equation}
where $\Bbbk(Y)^{\widetilde{}}$ is the
field of fractions of the domain 
$\Bbbk[Y]^{\widetilde{}}=\Bbbk[Y]/(l_a\mid a\in L'_{crit})$
with $l_a=1-\sum_{\sigma\in MSpl(a)}y_\sigma$ for each $a\in L'_{crit}$.
Thus $\Bbbk'$ is a purely transcendental extension of $\Bbbk$, of
transcendence degree $|Y|-|L'_{crit}|$, 
and $\Bbbk'=\Bbbk$ when $char(\Bbbk)=0$. 

It is well-known that the simplicial complex $\Delta$ supports 
a $M(X)$-graded free resolution $\cx F=(F_n,\phi_n)$ of $IR'$ 
over $R'=\Bbbk'[X]$, called the \emph{lcm-resolution} of $IR'$, 
see \cite[Proposition 3.3.27]{OW}, described as follows. 
In homological degree $n\ge 0$ the 
module $F_n$ is the free $M(X)$-graded 
$R'$-module with basis the set of all 
oriented $n$-faces of $\Delta$, each oriented face 
\eqref{E:n-faces} homogeneous of degree $a_n\in L'$. 
For $n\ge 1$ the differential $\phi_n\: F_n\lra F_{n-1}$ is given 
by sending an oriented face $T$ as in \eqref{E:n-faces} to  
\[
\sum_{i=0}^{n-1} (-1)^i T_i 
\quad + \quad  
(-1)^n (x^{a_n}/x^{a_{n-1}}) T_n ,    
\]
where for each $0\le i\le n$ the face 
$T_i$ is the oriented face obtained by omitting $a_i$ from $T$. 
We have a canonical $L'$-stratification on $\cx F$ where for each 
$a\in L'$ the module 
$F_n^a$ is the free $R'$-submodule of $F_n$ generated by the 
oriented faces \eqref{E:n-faces} with $a_n=a$.  It is immediate from 
this description that for the stratum at $a$ we have 
\[
\ol{\cx F(a)}=
C_\bullet(\Delta_{\le a},\Delta_{<a},\mathbb F)\otimes R'. 
\] 

Now all that remains is to specify a splitting 
$W_a$ on each $\ol{\cx F(a)}$ 
and apply Theorem~\ref{T:asymptotic}. In characteristic zero 
we can take $W_a=\delta_a^+\otimes 1$, where  $\delta_a^+$ is the 
splitting as in  Remark~\ref{R:pseudo-inverses} 
given by taking the Moore-Penrose inverses of the differentials 
of $C_\bullet(\Delta_{\le a},\Delta_{<a},\mathbb F)$ with respect 
to the natural inner products. 
Alternatively, in \emph{any} charateristic,  
we can take 
$W_a= \widehat{V_a}$, where 
\[
V_a=
\begin{cases} 
\sum_{\sigma\in MSpl(a)}\sigma\otimes y_\sigma  &\text{ if } a\in L'_{crit}; \\
\frac{1}{m(a)}
\sum_{\sigma\in MSpl_(a)} \sigma\otimes 1      &\text{ otherwise; }
\end{cases} 
\] 
is, by Remark~\ref{R:affine-combination}, a weak partial splitting on 
$C_\bullet(\Delta_{\le a}, \Delta_{<a},\mathbb F)\otimes R'$ 
satisfying \eqref{E:splitting}, and $\widehat{V_a}$ is the 
induced splitting from Proposition~\ref{P:weak-partial-splitting}.    

With this we are all set to use our dynamical systems machinery. 
By Theorem~\ref{T:asymptotic} the flow $\Phi_W$ of the vector field 
$W=\sum_{a\in L'}W_a$ stabilizes after $1 + \dim L'$ iterations, hence 
$\Phi_W^{1+\dim L'}$ is a projection of $\cx F$ onto a projective 
hence free resolution $\cx M$ of $IR'$ over $R'$. Since each $W_a$ 
preserves $M(X)$-degrees, so do $\Phi_W$ and its iterates, therefore 
$\cx M$ is $M(X)$-graded, and therefore minimal by 
Corollary~\ref{C:splittings-are-minimal}.   

Finally, since under the induced action of $G_I$ 
an element $\gamma\in G_I$ sends 
the canonical basis of $C_i(a)$ to the canonical basis of 
$C_i\bigl(\gamma(a)\bigr)$, we have $G_I$ sending matroidal 
splittings to matroidal splittings. Therefore  
constructing the action of $G_I$ on $\Bbbk'$ and the 
homomorphism of groups $\cx M\: G_I \lra Aut_\Bbbk(\cx M)$ is 
\emph{mutatis mutandis} that of the corresponding construction 
for the group $G$ in the proof of Theorem~\ref{T:toric-intrinsic}. 

We have thus completed a proof of

\begin{theorem}\label{T:monomial-intrinsic} 
Let $I$ be a monomial ideal in the free abelian monoid $M(X)$ on a  
finite set $X$, with group of automorphisms $G_I$, and 
let $\Bbbk$ be a field. 

Then there exist 
\begin{itemize}
\item
a canonical explicitly constructed finitely generated
field extension $\Bbbk'$ of \/ $\Bbbk$, 

\item 
a canonical explicitly constructed $M(X)$-homogeneous 
projection of the lcm-resolution 
$\cx F$ of $I\Bbbk'[X]$ over $\Bbbk'[X]$ onto a 
direct summand $\cx  M$ which is a  
minimal free $M(X)$-graded resolution 
of $I\Bbbk'[X]$ over $\Bbbk'[X]$, 

\item 
a canonical explicitly constructed 
action of $G_I$ as automorphisms of \/ $\Bbbk'$ over $\Bbbk$, and 

\item 
a canonical explicitly constructed homomorphism 
$\cx M\: G_I\lra Aut_\Bbbk(\cx M)$ from $G_I$ to the group of automorphisms 
of the chain complex of \/ $\Bbbk$-vector spaces $\cx M$;       
\end{itemize}
such that for each $\gamma\in G_I$ the map 
$\cx M(\gamma)\: \cx M \lra \gamma_*\cx M$ is an isomorphism of 
complexes of free $M(X)$-graded $\Bbbk'[X]$-modules that  lifts 
the isomorphism of $M(X)$-graded $\Bbbk'[X]$-modules  
$\gamma\: I\Bbbk'[X]\lra \gamma_*(I\Bbbk'[X])$.  In characteristic zero 
one can take $\Bbbk'=\Bbbk$, and in characteristic $p>0$ one can 
take $\Bbbk'$ to be a purely transcendental extension of $\Bbbk$ of 
degree $\sum_{a\in L'_{crit}} (m(a)-1)$. 
\end{theorem}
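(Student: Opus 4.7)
The plan is to carry out the general strategy laid out in the introduction: take the canonical lcm-resolution $\cx F$ of $IR'$ (with $R'=\Bbbk'[X]$) equipped with its natural $L'$-stratification from Example~\ref{Ex:Taylor-1}, construct a canonical splitting $W_a$ on each stratum $\ol{\cx F(a)}=C_\bullet(\Delta_{\le a},\Delta_{<a},\mathbb F)\otimes R'$, combine them via Proposition~\ref{P:stratification-induced}(c) into a vector field $W=\sum_{a\in L'}W_a$ on $\cx F$ compatible with the stratification, and then let $\cx M$ be the image of the stable iterate $\Phi_W^{1+\dim L'}$, which is a projection of $\cx F$ by Theorem~\ref{T:asymptotic}.

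The heart of the argument is the canonical choice of $W_a$. On each $C_i(a)$ the oriented faces give a canonical basis, and a finite combinatorial procedure (choose subsets $X_i$ bijecting onto a basis of $\im d_i^a$ and $Z_i$ bijecting onto a basis of $H_i(a)$) yields the finite set $MSpl(a)$ of matroidal splittings. When $\mathrm{char}(\Bbbk)\nmid m(a)$ I would take $W_a$ to be the induced splitting $\widehat{V_a}$ from Proposition~\ref{P:weak-partial-splitting}, where $V_a=\frac{1}{m(a)}\sum_{\sigma\in MSpl(a)}\sigma\otimes 1$ is a weak partial splitting satisfying (\ref{E:splitting}) by Remark~\ref{R:affine-combination}. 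For critical $a$ (those with $\mathrm{char}(\Bbbk)\mid m(a)$) the uniform average is unavailable, so I would instead introduce variables $y_\sigma$ indexed by $MSpl(a)$, impose $\sum_{\sigma}y_\sigma=1$ (the relations $l_a$ in (\ref{E:field-extension})), pass to the fraction field $\Bbbk'$, and take $V_a=\sum_\sigma \sigma\otimes y_\sigma$, again a weak partial splitting satisfying (\ref{E:splitting}); the induced $\widehat{V_a}$ is then the desired splitting $W_a$. Since each $W_a$ preserves $M(X)$-degrees, so does $W$ and its flow; hence $\cx M$ inherits an $M(X)$-graded structure, and Corollary~\ref{C:splittings-are-minimal} forces the entries of its differential into $\m R'$, establishing minimality.

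For the $G_I$-equivariance I would run the argument \emph{mutatis mutandis} from Theorem~\ref{T:toric-intrinsic}: each $\gamma\in G_I$ acts on $L'$ and permutes canonical face bases of $\Delta$, hence permutes matroidal splittings via a bijection $\gamma\colon MSpl(a)\to MSpl(\gamma(a))$. This yields a $G_I$-action on the variable set $Y$ that preserves each relation $l_a$ (since $G_I$ permutes the summands of $\sum_\sigma y_\sigma$), and therefore lifts to an action on $\Bbbk'$ over $\Bbbk$. The lifted action on the oriented-face basis together with the $\Bbbk$-algebra action on $\Bbbk'[X]$ produces for each $\gamma$ a $\Bbbk$-automorphism $\cx F(\gamma)\colon\cx F\to\gamma_*\cx F$ that sends $\cx F(a)$ to $\cx F(\gamma(a))$ and carries $W_a$ to $W_{\gamma(a)}$. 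Hence $\cx F(\gamma)$ commutes with $W$ and with $\Phi_W^{1+\dim L'}$, so it restricts to the required homomorphism $\cx M\colon G_I\to Aut_\Bbbk(\cx M)$ lifting $\gamma\colon IR'\to\gamma_*(IR')$.

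The main technical obstacle is the critical-element step: one must verify that $V_a=\sum_\sigma\sigma\otimes y_\sigma$ is genuinely a weak partial splitting after tensoring with $\Bbbk'$, and that the denominator needed to apply Proposition~\ref{P:weak-partial-splitting}(b) is a unit, which is guaranteed precisely by the defining relation $\sum_\sigma y_\sigma=1$ in $\Bbbk'$; and one must check that the quotient $\Bbbk[Y]/(l_a\mid a\in L'_{crit})$ is indeed a domain of the stated transcendence degree, which follows because the $l_a$ involve disjoint variable sets $Y_a$ and each is linear with a unit coefficient, so successive elimination gives a polynomial ring in $|Y|-|L'_{crit}|$ variables. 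Once these checks are in place, every remaining verification (compatibility of $W$ with the stratification, $M(X)$-homogeneity of $\Phi_W$, $G_I$-equivariance of the relations $l_a$) is a routine unwinding of the constructions already assembled in Sections~\ref{S:vector-fields}--\ref{S:asymptotic-properties}.
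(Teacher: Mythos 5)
Your proposal is correct and matches the paper's own proof essentially step for step: same starting resolution (the lcm-resolution with its $L'$-stratification), same matroidal splittings, same averaging trick with uniform weights for non-critical $a$ and generic weights $y_\sigma$ subject to $\sum_\sigma y_\sigma=1$ over the fraction field $\Bbbk'$ for critical $a$, same use of Proposition~\ref{P:stratification-induced}(c), Theorem~\ref{T:asymptotic}, and Corollary~\ref{C:splittings-are-minimal}, and the same \emph{mutatis mutandis} reference to the toric equivariance argument. (Minor slip: the $L'$-stratification of the lcm-resolution is set up in Section~\ref{S:monomial-resolutions} itself rather than in Example~\ref{Ex:Taylor-1}, which concerns the Taylor resolution; this does not affect the argument.)
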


\begin{corollary}\label{C:infinite-primes}
Given any monomial ideal $I$ in $M(X)$,  Problem~\ref{P:mfr} 
has a solution with $\Bbbk'=\Bbbk=\mathbb F_p$ for every sufficiently 
big prime $p$. 
\end{corollary}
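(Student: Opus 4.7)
The plan is to deduce this as an immediate quantitative consequence of Theorem~\ref{T:monomial-intrinsic}. That theorem already tells us that one can take $\Bbbk'=\Bbbk$ precisely when the set $L'_{\operatorname{crit}}$ of critical elements of $L'$ is empty, since the transcendence degree of $\Bbbk'$ over $\Bbbk$ is $\sum_{a\in L'_{\operatorname{crit}}}(m(a)-1)$. So the whole task reduces to showing that for all sufficiently large primes $p$, no element $a\in L'$ is critical over $\mathbb F_p$, i.e., no $m(a)$ is divisible by $p$.

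First I would observe that, since $I$ is a monomial ideal with a finite set $U$ of minimal generators, the lcm-lattice $L$ is a finite poset (a sublattice of the finite Boolean algebra of subsets of $U$, after identifying each $m_\sigma$ with a subset of generators). Consequently $L'$ is finite, and for each $a\in L'$ the simplicial complexes $\Delta_{\le a}$ and $\Delta_{<a}$ are finite, so each $C_i(a)=C_i(\Delta_{\le a},\Delta_{<a},\mathbb F)$ is a finite-dimensional $\mathbb F$-vector space. In particular the estimate \eqref{E:matroidal-bound} shows that each $m(a)$ is a finite positive integer, and the collection $\{m(a)\mid a\in L'\}$ is a finite set of positive integers. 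Set
\[
N(I) \;=\; \max_{a\in L'} m(a),
\]
which is a finite integer depending only on $I$.

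Next I would choose any prime $p>N(I)$ and take $\Bbbk=\mathbb F_p$. For every $a\in L'$ we then have $m(a)\le N(I)<p$, so $p\nmid m(a)$, i.e., $m(a)$ is invertible in $\mathbb F_p$. Hence $L'_{\operatorname{crit}}=\emptyset$, no transcendental extension is needed, and the construction in the proof of Theorem~\ref{T:monomial-intrinsic} applied with $\Bbbk'=\Bbbk=\mathbb F_p$ produces the desired solution to Problem~\ref{P:mfr}: the splittings $W_a=\widehat{V_a}$ with $V_a=\frac{1}{m(a)}\sum_{\sigma\in MSpl(a)}\sigma\otimes 1$ are well-defined over $\mathbb F_p$, so the vector field $W=\sum_{a\in L'}W_a$ on the lcm-resolution of $I\mathbb F_p[X]$ has a flow that stabilizes in every homological degree after finitely many iterations, and its stable image is the required intrinsic minimal free resolution with its canonical $G_I$-action.

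There is essentially no obstacle here: the only thing one might want to double-check is that the construction in Theorem~\ref{T:monomial-intrinsic} really does produce a solution satisfying all conditions of Problem~\ref{P:mfr} when $\Bbbk'=\Bbbk$, but this is already built into the statement of that theorem. So the corollary is a clean finiteness observation extracted from the quantitative bound on the degree of $\Bbbk'$ over $\Bbbk$ in Theorem~\ref{T:monomial-intrinsic}.
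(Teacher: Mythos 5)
Your proposal is correct and takes essentially the same approach as the paper's own proof: both reduce to showing that $L'_{\operatorname{crit}}=\emptyset$ for large $p$, using the finiteness of $L'$ and of each $m(a)$ (via the bound \eqref{E:matroidal-bound}) and then invoking the $\Bbbk'=\Bbbk$ case of Theorem~\ref{T:monomial-intrinsic}. The only cosmetic difference is that the paper writes out an explicit numerical bound $m(a)\le\prod_{i\ge 0}4^{\dim C_i(a)}$ and takes the maximum of these, whereas you just let $N(I)=\max_{a\in L'}m(a)$ directly; the two are interchangeable.
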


\begin{proof} 
By \eqref{E:matroidal-bound} we get for each $a\in L'$ 
\[
m(a)\le \prod_{i\ge 0}2^{\dim C_i(a)}2^{\dim C_i(a)}
\]
hence for $p\ge \max\{\prod_{i\ge 0}4^{\dim C_i(a)}\mid  a\in L'\}$ we 
get the desired conclusion. 
\end{proof}

\begin{remarks}\label{R:monomial-remarks} 
(a)
By now it should be clear to the reader that instead of using the
lcm-resolution or the Taylor resolution as a starting point, one
could apply our technique starting from any other canonical construction
of a based free resolution of a monomial ideal $I$. For example, one could
opt for the (in general smaller than the lcm-resolution) resolution supported
on the order complex of the Betti poset of $I$, see e.g. \cite{TV}. 

(b)
Our construction provides an intrinsic
canonical set of $M(X)$-homogeneous generators for the minimal resolution
$\cx M$, but not a basis. In general it is not possible to produce a
canonical and intrinsic (not even up to associates) basis of $\cx M$, not
even in characteristic zero. This was already pointed out in 
\cite[Example 3.8]{EMO},
and that observation continues to hold also under the much weaker requirements
of Probem~\ref{P:mfr}. One can provide a rigorous proof by using an appropriately
modified version of the proof of Theorem~\ref{T:counterexample}. We leave
these details to the interested reader. 

(c)
Our construction has additional functorial
properties not mentioned in Theorem~\ref{T:monomial-intrinsic}. 
Given a monomial ideal $J\subset M(X)$ and an element $m\in M(X)$, 
write $J_{\le m}$ for the monomial ideal generated by the those 
minimal generators of $J$ that are $\le m$.
Denote by $\mathcal I(X)$ the category with 
objects the monomial ideals in $M(X)$,
and with morphisms from $I$ to $J$ those automorphisms 
$\gamma$ of $M(X)$ such that $\gamma(I)=J_{\le m}$ for some 
(depending on $\gamma$) monomial $m$.
We leave to the reader the routine verification that our construction
produces assignments $I \mapsto \cx M=\cx M(I)$ and
$\gamma\mapsto \cx M(\gamma)$ that make it into a functor
$\cx M\:\mathcal I(X) \lra \comp$, where $\comp$ is the category of chain
complexes of finite free modules as described in \cite[Section 1]{Tc}.  

(d)
Just as in the toric rings case, the transcendence degree of the base
field extension that our method requires in positive characteristic depends
on the starting non-minimal canonical resolution,
see Example~\ref{Ex:mfr-example}(a). For instance, for ideals of low
projective dimension but with a sufficiently big number of minimal generators,
starting with the lcm-resolution will in general produce a lower transcendence
degree than starting with the Taylor resolution. It is an interesting problem
whether there exists a canonical non-minimal starting construction that would
produce for every ideal a transcendence degree that is lowest among the degrees 
produced by all other canonical non-minimal starting constructions.

(e)
Given a monomial ideal $I$ and a prime $p\ge 2$,
a natural question to ask is what is
the smallest transcendence degree required of an extension $\Bbbk'$ of
$\Bbbk=\mathbb F_p$ so that Problem~\ref{P:mfr} has a solution for $I$.
Another related question is whether there is in some sense a 
``minimal'' such transcendental extension, perhaps unique up to an isomorphism.
At this point it seems that a functorial construction 
such as ours would not be able to produce that kind of ``minimal''
extension for every monomial ideal.  
It is very likely that one would
need to sacrifice at least some of the functoriality described in 
part (c) above, 
in order to capture the specifics of a given ideal 
that would lead to such a  ``minimal'' extension. 

(f) 
Given a monomial ideal $I$, another interesting problem is to describe 
the set of all primes $p\ge 2$ for which Problem~\ref{P:mfr} has no 
solution for $I$ with $\Bbbk'=\Bbbk=\mathbb F_p$.  

(g) 
By taking averages as in the proof of Theorem~\ref{T:monomial-intrinsic} 
of matroidal splittings only (instead of all splittings), 
one obtains a proof of Theorem~\ref{T:toric-intrinsic} that keeps 
the base field unchanged in all but finitely many positive characteristics. 

(h)
A matroidal splitting is the same as a splitting induced by 
what is called a \emph{community} in \cite{EMO}. 
\end{remarks}

\section{An example}\label{S:an-example}

To illustrate our construction, in this section we compute the
following very useful 

\begin{example}\label{Ex:mfr-example}  
Let $\Bbbk$ be a field and let $\mathbb F$ 
be the prime subfield of $\Bbbk$. 
Let $X=\{v_0,v_1,v_2,v_3,e_{12},e_{23},e_{31}\}$ and let $I$ be the 
monomial ideal generated 
by the set of monomials 
\[
m_0 = v_1v_2v_3e_{12}e_{23}e_{31} ,\quad  
m_1 = v_0^2v_2v_3e_{23} ,\quad  
m_2 = v_0^2v_1v_3e_{31} ,\quad 
m_3 = v_0^2v_1v_2e_{12}. 
\]
Note this is precisely the ideal $I(3)$ from
Definition~\ref{D:the-counterexample}. 
The lcm-lattice of $I$ is the set of monomials 
$\{1, m_0, m_1, m_2, m_3, m_{12}, m_{31}, m_{23}, m\}$, 
where 
\begin{align*}
m     &= v_0^2v_1v_2v_3e_{12}e_{23}e_{31}, \\  
m_{12} &= v_0^2v_1v_2v_3e_{23}e_{31},      \\ 
m_{23} &= v_0^2v_1v_2v_3e_{31}e_{12},      \\ 
m_{31} &= v_0^2v_1v_2v_3e_{12}e_{23}.         
\end{align*}
In order to have uniform notation, we also set $e_{ij}=e_{ji}$ 
and $m_{ij}=m_{ji}$. The simplicial complex $\Delta=\Delta(L')$ 
has $8$ vertices, $13$ edges, and six $2$-faces.  In the canonical 
bases given by the oriented faces, the complex 
$C_\bullet(\Delta_{\le m},\Delta_{<m},\mathbb F)$ has the form 
\[
0 \lla 
\mathbb F 
\xleftarrow{
[
\begin{smallmatrix}
1 & 1 & 1 & 1 & 1 & 1 & 1 
\end{smallmatrix}
]
} 
\mathbb F^{7} 
\xleftarrow{
\left[
\begin{smallmatrix}
\hm 0 &\hm 0 &\hm 0 &\hm 0 &\hm 0 &\hm 0  \\ 
   -1 &   -1 &\hm 0 &\hm 0 &\hm 0 &\hm 0  \\ 
\hm 0 &\hm 0 &   -1 &   -1 &\hm 0 &\hm 0  \\ 
\hm 0 &\hm 0 &\hm 0 &\hm 0 &   -1 &   -1  \\ 
\hm 1 &\hm 0 &\hm 1 &\hm 0 &\hm 0 &\hm 0  \\ 
\hm 0 &\hm 1 &\hm 0 &\hm 0 &\hm 1 &\hm 0  \\
\hm 0 &\hm 0 &\hm 0 &\hm 1 &\hm 0 &\hm 1
\end{smallmatrix}
\right]
}
\mathbb F^{6} 
\lla 0, 
\]
each of the complexes  
$C_\bullet(\Delta_{\le m_{ij}},\Delta_{<m_{ij}},\mathbb F)$ for 
$1\le i<j\le 3$  
has the form 
\[
0\lla \mathbb F
\xleftarrow{
[
\begin{smallmatrix}
1 & 1 
\end{smallmatrix}
]
} 
\mathbb F^2 
\lla 0,  
\] 
and each of the complexes 
$C_\bullet(\Delta_{\le m_i},\Delta_{<m_i},\mathbb F)$ 
for $i=0,\dots,3$  
has the form 
\[
0\lla \mathbb F\lla 0. 
\]

(a) 
A standard computation shows that in every characteristic 
we have $72$ matroidal splittings of 
the complex $C_\bullet(\Delta_{\le m},\Delta_{<m},\mathbb F)$,  
we have $2$ matroidal splittings for each of the three complexes 
$C_\bullet(\Delta_{\le m_{ij}}, \Delta_{<m_{ij}},\mathbb F)$, and 
only one matroidal splitting for each of the four complexes 
$C_\bullet(\Delta_{\le m_i}, \Delta_{<m_i}, \mathbb F)$. Thus using 
our methods with the lcm-resolution as a starting point we 
produce an intrinsic minimal resolution over $\mathbb F_p$ whenever 
$p\ge 5$. When $p=2$ we produce an intrinsic minimal resolution 
over a purely 
transcendental extension of $\mathbb F_2$ of transcendence 
degree $|Y| - |L'_{crit}| = (72-1) + 3(2-1)=74$, 
and when $p=3$ we get a minimal resolution over a purely transcendental 
extension of $\mathbb F_3$ of transcendence degree $72-1=71$.  
We leave it to the reader ro check that the corresponding computation with 
the Taylor resolution as starting point produces an intrinsic 
construction in both $p=2$ and $p=3$ cases 
after taking a transcendental extension with 
transcendence degree ``only'' $17$. On the other hand, 
when $p\ne 3$ there 
is an intrinsic construction for the minimal resolution of $I$   
without extending the field at all, see Remark~\ref{R:intrinsic-resolution}(a).
However, by Theorem~\ref{T:counterexample},  
in characteristic $p=3$ it is not possible to  avoid taking
some kind of transcendental 
extension. Remark~\ref{R:intrinsic-resolution}(b) gives such an
intrinsic construction using an extension of $\mathbb F_3$
of transcendence degree $2$. At this point 
we do not know if it is possible to have an intrinsic construction
where the transcendence degree of the required field extension
of $\mathbb F_3$ is exactly $1$. 

(b) 
In characteristic zero taking Moore-Penrose inverses and using 
the explicit formula \eqref{E:pseudo-inverse} yields the 
following splittings: 
\[
0\lra \mathbb F 
\xrightarrow{
\left[ 
\begin{smallmatrix} 
1/7 \\
1/7 \\
1/7 \\
1/7 \\
1/7 \\
1/7 \\ 
1/7
\end{smallmatrix}
\right] 
}
\mathbb F^7 
\xrightarrow{
1/12 
\left[
\begin{smallmatrix} 
      0& {-5}&\hm 3& {-1}&\hm 5& {-3}&\hm 1\\
      0& {-5}& {-1}&\hm 3& {-3}&\hm 5&\hm 1\\
      0&\hm 3& {-5}& {-1}&\hm 5&\hm 1& {-3}\\
      0& {-1}& {-5}&\hm 3& {-3}&\hm 1&\hm 5\\
      0&\hm 3& {-1}& {-5}&\hm 1&\hm 5& {-3}\\
      0& {-1}&\hm 3& {-5}&\hm 1& {-3}&\hm 5
\end{smallmatrix}
\right]
}
\mathbb F^6 \lra 0   
\]
for $C_\bullet(\Delta_{\le m}, \Delta_{< m}, \mathbb F)$, see 
Example~\ref{Ex:pseudo-inverses},  and 
\[
0 \lra \mathbb F 
\xrightarrow{
\left[ 
\begin{smallmatrix} 
1/2 \\
1/2 
\end{smallmatrix}
\right] 
}
\mathbb F^2 \lra 0 
\]
for each of the complexes 
$C_\bullet(\Delta_{\le m_{ij}}, \Delta_{< m_{ij}}, \mathbb F)$ with 
$1\le i<j\le 3$.  The lcm-resolution $\cx F$ has the form  
{\small
\begin{align*}
0 \leftarrow 
R^8 
\xleftarrow[\phi_1]{
-\left[
\begin{smallmatrix}
\,\, v_0^2&    \hm 0&   \hm 0&         \hm 0
     &    \hm 0&   \hm 0&         \hm 0 
     &    \hm 0&   \hm 0&         \hm 0
     &    \hm 0&   \hm 0&         \hm 0  \\ 
\hm 0& v_1e_{31}& v_1e_{12}& v_1e_{12}e_{31}
     &    \hm 0&   \hm 0&         \hm 0 
     &    \hm 0&   \hm 0&         \hm 0
     &    \hm 0&   \hm 0&         \hm 0  \\ 
\hm 0&    \hm 0&   \hm 0&         \hm 0
     & v_2e_{23}& v_2e_{12}& v_2e_{12}e_{23}
     &    \hm 0&   \hm 0&         \hm 0
     &    \hm 0&   \hm 0&         \hm 0  \\ 
\hm 0&    \hm 0&   \hm 0&         \hm 0
     &    \hm 0&   \hm 0&         \hm 0 
     & v_3e_{23}& v_3e_{31}& v_3e_{31}e_{23}
     &    \hm 0&   \hm 0&         \hm 0  \\ 
\hm 0&       -1&   \hm 0&         \hm 0
     &       -1&   \hm 0&         \hm 0 
     &    \hm 0&   \hm 0&         \hm 0
     &    e_{12}&   \hm 0&         \hm 0  \\ 
\hm 0&    \hm 0&      -1&         \hm 0
     &    \hm 0&   \hm 0&         \hm 0 
     &       -1&   \hm 0&         \hm 0
     &    \hm 0&   e_{31}&         \hm 0  \\ 
\hm 0&    \hm 0&   \hm 0&         \hm 0
     &    \hm 0&      -1&         \hm 0 
     &    \hm 0&      -1&         \hm 0
     &    \hm 0&   \hm 0&         e_{23}  \\ 
   -1&    \hm 0&   \hm 0&            -1
     &    \hm 0&   \hm 0&            -1  
     &    \hm 0&   \hm 0&            -1
     &       -1&      -1&            -1     
\end{smallmatrix}
\right]
} 
&R^{13} 
\\[+5pt] 
\xleftarrow[\phi_2]{
\left[
\begin{smallmatrix} 
\hm 0        &    \hm 0&    \hm 0& 
\hm 0        &    \hm 0&    \hm 0          \\ 
e_{12}        &    \hm 0&    \hm 0& 
\hm 0        &    \hm 0&    \hm 0          \\ 
\hm 0        &    e_{31}&    \hm 0& 
\hm 0        &    \hm 0&    \hm 0          \\ 
-1           &       -1&    \hm 0& 
\hm 0        &    \hm 0&    \hm 0          \\ 
\hm 0        &    \hm 0&    e_{12}& 
\hm 0        &    \hm 0&    \hm 0          \\ 
\hm 0        &    \hm 0&    \hm 0& 
e_{23}        &    \hm 0&    \hm 0          \\ 
\hm 0        &    \hm 0&       -1& 
-1           &    \hm 0&    \hm 0          \\ 
\hm 0        &    \hm 0&    \hm 0& 
\hm 0        &    e_{31}&    \hm 0          \\ 
\hm 0        &    \hm 0&    \hm 0& 
\hm 0        &    \hm 0&    e_{23}          \\ 
\hm 0        &    \hm 0&    \hm 0& 
\hm 0        &       -1&       -1          \\ 
\hm 1        &    \hm 0&    \hm 1& 
\hm 0        &    \hm 0&    \hm 0          \\ 
\hm 0        &    \hm 1&    \hm 0& 
\hm 0        &    \hm 1&    \hm 0          \\ 
\hm 0        &    \hm 0&    \hm 0& 
\hm 1        &    \hm 0&    \hm 1             
\end{smallmatrix} 
\right]
}
&R^6 \leftarrow 0 
\end{align*}
}
and the  
resulting vector field $W=\phi^+$ on $\cx F$ is given by 
\begin{align*}
0 \lra 
R^8 
&\xrightarrow[\phi_1^+]{
\left[
\begin{smallmatrix} 
0            &    \hm 0&    \hm 0&   \hm 0&
\hm 0        &    \hm 0&    \hm 0&     1/7       \\ 
0            &    \hm 0&    \hm 0&   \hm 0&
1/2          &    \hm 0&    \hm 0&   \hm 0       \\ 
0            &    \hm 0&    \hm 0&   \hm 0&
\hm 0        &      1/2&    \hm 0&   \hm 0       \\ 
0            &    \hm 0&    \hm 0&   \hm 0&
\hm 0        &    \hm 0&    \hm 0&     1/7       \\ 
0            &    \hm 0&    \hm 0&   \hm 0& 
1/2          &    \hm 0&    \hm 0&   \hm 0       \\ 
0            &    \hm 0&    \hm 0&   \hm 0&
\hm 0        &    \hm 0&      1/2&   \hm 0       \\ 
0            &    \hm 0&    \hm 0&   \hm 0& 
\hm 0        &    \hm 0&    \hm 0&     1/7       \\ 
0            &    \hm 0&    \hm 0&   \hm 0& 
\hm 0        &      1/2&    \hm 0&   \hm 0       \\ 
0            &    \hm 0&    \hm 0&   \hm 0&  
\hm 0        &    \hm 0&      1/2&   \hm 0       \\ 
0            &    \hm 0&    \hm 0&   \hm 0&  
\hm 0        &    \hm 0&    \hm 0&     1/7       \\ 
0            &    \hm 0&    \hm 0&   \hm 0&  
\hm 0        &    \hm 0&    \hm 0&     1/7       \\ 
0            &    \hm 0&    \hm 0&   \hm 0&  
\hm 0        &    \hm 0&    \hm 0&     1/7       \\ 
0            &    \hm 0&    \hm 0&   \hm 0&  
\hm 0        &    \hm 0&    \hm 0&     1/7          
\end{smallmatrix} 
\right]
}
\\[+5pt]
R^{13} &\xrightarrow[\phi_2^+]{
1/12 
\left[
\begin{smallmatrix} 
      0&\hm 0&\hm 0& {-5}&\hm 0&\hm 0&\hm 3
       &\hm 0&\hm 0& {-1}&\hm 5& {-3}&\hm 1\\
      0&\hm 0&\hm 0& {-5}&\hm 0&\hm 0& {-1}
       &\hm 0&\hm 0&\hm 3& {-3}&\hm 5&\hm 1\\
      0&\hm 0&\hm 0&\hm 3&\hm 0&\hm 0& {-5}
       &\hm 0&\hm 0& {-1}&\hm 5&\hm 1& {-3}\\
      0&\hm 0&\hm 0& {-1}&\hm 0&\hm 0& {-5}
       &\hm 0&\hm 0&\hm 3& {-3}&\hm 1&\hm 5\\
      0&\hm 0&\hm 0&\hm 3&\hm 0&\hm 0& {-1}
       &\hm 0&\hm 0& {-5}&\hm 1&\hm 5& {-3}\\
      0&\hm 0&\hm 0& {-1}&\hm 0&\hm 0&\hm 3
       &\hm 0&\hm 0& {-5}&\hm 1& {-3}&\hm 5
\end{smallmatrix}
\right] 
}
R^6 \longrightarrow 0. 
\end{align*}
Therefore the flow 
$\Phi=\Phi_W=\Phi_{\phi^{+}}=\id - \phi\phi^+ - \phi^+\phi$ 
is given by
\[
\begin{CD}
0 @<<< R^8 @< \phi_1 <<  R^{13} @< \phi_2 <<  R^6  @<<< 0 \\
@.    @V \Phi_0 VV    @V \Phi_1 VV       @V \Phi_2 VV @. \\ 
0 @<<< R^8 @< \phi_1 <<  R^{13} @< \phi_2 <<   R^6 @<<<  0,  
\end{CD}
\]
where 
\[
\Phi_0=
\left[
\begin{smallmatrix}
\hm 1\hm&\hm 0\hm&\hm 0\hm&\hm 0\hm&\hm 0\hm&\hm 0\hm&\hm 0\hm&\frac{v_0^2}{7}  
\\[+5pt]
\hm 0\hm&\hm 1\hm&\hm 0\hm&\hm 0\hm&\frac{v_1e_{13}}{2}&
\frac{v_1e_{12}}{2}&\hm 0\hm&\frac{v_1e_{12}e_{31}}{7}
\\[+5pt]
\hm 0\hm&\hm 0\hm&\hm 1\hm&\hm 0\hm&\frac{v_2e_{23}}{2}&\hm 0\hm&
\frac{v_2e_{12}}{2}&\frac{v_2e_{12}e_{23}}{7}
\\[+5pt]
\hm 0\hm&\hm 0\hm&\hm 0\hm&\hm 1\hm&\hm 0\hm&\frac{v_3e_{13}}{2}&
\frac{v_3e_{13}}{2}&\frac{v_3e_{13}e_{23}}{7}
\\[+5pt]
\hm 0\hm&\hm 0\hm&\hm 0\hm&\hm 0\hm&\hm 0\hm&\hm 0\hm&\hm 0\hm&\frac{e_{12}}{7}
\\[+5pt]
\hm 0\hm&\hm 0\hm&\hm 0\hm&\hm 0\hm&\hm 0\hm&\hm 0\hm&\hm 0\hm&\frac{e_{13}}{7}
\\[+5pt]
\hm 0\hm&\hm 0\hm&\hm 0\hm&\hm 0\hm&\hm 0\hm&\hm 0\hm&\hm 0\hm&\frac{e_{23}}{7}
\\[+5pt]
\hm 0\hm&\hm 0\hm&\hm 0\hm&\hm 0\hm&\hm 0\hm&\hm 0\hm&\hm 0\hm&\hm 0\hm 
\end{smallmatrix}
\right], 
\]

\[
\Phi_1 = 
{
\left[
\begin{smallmatrix}
\frac{6}{7}&0&0&{\frac{-1}{7}}&0&0&{\frac{-1}{7}}&0&0&
{\frac{-1}{7}}&{\frac{-1}{7}}&{\frac{-1}{7}}&{\frac{-1}{7}}
\\[+5pt]
0&\frac{1}{2}&0&\frac{5e_{12}}{12}&{\frac{-1}{2}}&0&
{\frac{-e_{12}}{4}}&0&0&\frac{e_{12}}{12}&\frac{e_{12}}{12}&
\frac{e_{12}}{4}&{\frac{-e_{12}}{12}}
\\[+5pt]
0&0&\frac{1}{2}&\frac{5e_{31}}{12}&0&0&\frac{e_{31}}{12}&
{\frac{-1}{2}}&0&{\frac{-e_{31}}{4}}&\frac{e_{31}}{4}&
\frac{e_{31}}{12}&{\frac{-e_{31}}{12}}
\\[+5pt]
{\frac{-1}{7}}&0&0&\frac{1}{42}&0&0&\frac{1}{42}&0&0&\frac{1}{42}&
\frac{1}{42}&\frac{1}{42}&\frac{1}{42}
\\[+5pt]
0&{\frac{-1}{2}}&0&{\frac{-e_{12}}{4}}&\frac{1}{2}&0&
\frac{5e_{12}}{12}&0&0&\frac{e_{12}}{12}&\frac{e_{12}}{12}&
{\frac{-e_{12}}{12}}&\frac{e_{12}}{4} 
\\[+5pt]
0&0&0&\frac{e_{23}}{12}&0&\frac{1}{2}&\frac{5e_{23}}{12}&0&
{\frac{-1}{2}}&{\frac{-e_{23}}{4}}&\frac{e_{23}}{4}&
{\frac{-e_{23}}{12}}&\frac{e_{23}}{12}
\\[+5pt]
{\frac{-1}{7}}&0&0&\frac{1}{42}&0&0&\frac{1}{42}&0&0&\frac{1}{42}&
\frac{1}{42}&\frac{1}{42}&\frac{1}{42}
\\[+5pt]
0&0&{\frac{-1}{2}}&{\frac{-e_{31}}{4}}&0&0&\frac{e_{31}}{12}&
\frac{1}{2}&0&\frac{5e_{31}}{12}&{\frac{-e_{31}}{12}}&
\frac{e_{31}}{12}&\frac{e_{31}}{4}
\\[+5pt]
0&0&0&\frac{e_{23}}{12}&0&{\frac{-1}{2}}&{\frac{-e_{23}}{4}}&0&
\frac{1}{2}&\frac{5}{12}\,{x}_{1}&{-\frac{1}{12}\,{x}_{1}}&
\frac{e_{23}}{4}&\frac{e_{23}}{12}
\\[+5pt]
{\frac{-1}{7}}&0&0&\frac{1}{42}&0&0&\frac{1}{42}&0&0&\frac{1}{42}&
\frac{1}{42}&\frac{1}{42}&\frac{1}{42}
\\[+5pt]
{\frac{-1}{7}}&0&0&\frac{1}{42}&0&0&\frac{1}{42}&0&0&\frac{1}{42}&
\frac{1}{42}&\frac{1}{42}&\frac{1}{42}
\\[+5pt]
{\frac{-1}{7}}&0&0&\frac{1}{42}&0&0&\frac{1}{42}&0&0&\frac{1}{42}&
\frac{1}{42}&\frac{1}{42}&\frac{1}{42}
\\[+5pt]
{\frac{-1}{7}}&0&0&\frac{1}{42}&0&0&\frac{1}{42}&0&0&\frac{1}{42}&
\frac{1}{42}&\frac{1}{42}&\frac{1}{42}
\end{smallmatrix}
\right], 
}
\]
and 
\[
\Phi_2 = (1/6)  
\left[
\begin{smallmatrix}
      \hm 1& {-1}& {-1}&\hm 1&\hm 1& {-1}\\
       {-1}&\hm 1&\hm 1& {-1}& {-1}&\hm 1\\
       {-1}&\hm 1&\hm 1& {-1}& {-1}&\hm 1\\
      \hm 1& {-1}& {-1}&\hm 1&\hm 1& {-1}\\
      \hm 1& {-1}& {-1}&\hm 1&\hm 1& {-1}\\
       {-1}&\hm 1&\hm 1& {-1}& {-1}&\hm 1
\end{smallmatrix}
\right]. 
\] 
The posets $L'_n$ indexing the nonzero components
of the $L'$-stratification of $\cx F$ in homological degree $n$ are 
\begin{align*}
L'_0 &=L', \\
L'_1 &=\{m_{12}, m_{31}, m_{23}, m\}, \text{ and }\\ 
L'_2 &=\{m\}.
\end{align*}  
Thus, according to Theorem~\ref{T:asymptotic} we must have that 
$\Phi_0^3$, $\Phi_1^2$, and $\Phi_2$ are projections. In fact
even $\Phi_0^2$ is already a projection. Computing these 
matrices gives   
\[
\Phi_0^2 = 
\left[
\begin{smallmatrix}
\hm 1\hm&\hm 0\hm&\hm 0\hm&\hm 0\hm&\hm 0\hm&\hm 0\hm&
\hm 0\hm&\frac{v_0^2}{7}
\\[+5pt]
\hm 0\hm&\hm 1\hm&\hm 0\hm&\hm 0\hm&\frac{v_1e_{31}}{2}&
\frac{v_1e_{12}}{2}&\hm 0\hm&\frac{2v_1e_{12}e_{31}}{7}
\\[+5pt]      
\hm 0\hm&\hm 0\hm&\hm 1\hm&\hm 0\hm&\frac{v_2e_{23}}{2}&
\hm 0\hm&\frac{v_2e_{12}}{2}&\frac{2v_2e_{12}e_{23}}{7}
\\[+5pt]
\hm 0\hm&\hm 0\hm&\hm 0\hm&\hm 1\hm&\hm 0\hm&\frac{v_3e_{23}}{2}&
\frac{v_3e_{31}}{2}&\frac{2v_3e_{23}e_{31}}{7}
\\[+8pt]
\hm 0\hm&\hm 0\hm&\hm 0\hm&\hm 0\hm&\hm 0\hm&\hm 0\hm&\hm 0\hm&\hm 0\hm 
\\[+10pt]
\hm 0\hm&\hm 0\hm&\hm 0\hm&\hm 0\hm&\hm 0\hm&\hm 0\hm&\hm 0\hm&\hm 0\hm 
\\[+10pt]
\hm 0\hm&\hm 0\hm&\hm 0\hm&\hm 0\hm&\hm 0\hm&\hm 0\hm&\hm 0\hm&\hm 0\hm 
\\[+10pt]
\hm 0\hm&\hm 0\hm&\hm 0\hm&\hm 0\hm&\hm 0\hm&\hm 0\hm&\hm 0\hm&\hm 0\hm 
\end{smallmatrix}
\right], 
\]
and 
{\small 
\[
\Phi_1^2 = 
\left[
{
\begin{smallmatrix}
\frac{6}{7}&0&0&{\frac{-1}{7}}&0&0&{\frac{-1}{7}}
&0&0&{\frac{-1}{7}}&{\frac{-1}{7}}&{\frac{-1}{7}}&{\frac{-1}{7}}   
\\[+5pt]      
{\frac{-e_{12}}{14}}&\frac{1}{2}&0&\frac{29e_{12}}{84}&
{\frac{-1}{2}}&0&{\frac{-9e_{12}}{28}}&0&0&\frac{e_{12}}{84}&
\frac{e_{12}}{84}&\frac{5e_{12}}{28}&{\frac{-13e_{12}}{84}}
\\[+5pt]      
{\frac{-e_{13}}{14}}&0&\frac{1}{2}&\frac{29e_{13}}{84}&0&0&
\frac{e_{13}}{84}&{\frac{-1}{2}}&0&{\frac{-9e_{13}}{28}}&
\frac{5e_{13}}{28}&\frac{e_{13}}{84}&{\frac{-13e_{13}}{84}}
\\[+5pt]      
{\frac{-1}{7}}&0&0&\frac{1}{42}&0&0&\frac{1}{42}&0&0&
\frac{1}{42}&\frac{1}{42}&\frac{1}{42}&\frac{1}{42}
\\[+5pt]      
{\frac{-e_{12}}{14}}&{\frac{-1}{2}}&0&{\frac{-9e_{12}}{28}}&
\frac{1}{2}&0&\frac{29e_{12}}{84}&0&0&\frac{e_{12}}{84}&
\frac{e_{12}}{84}&{\frac{-13e_{12}}{84}}&\frac{5e_{12}}{28}
\\[+5pt]      
{\frac{-e_{23}}{14}}&0&0&\frac{e_{23}}{84}&0&\frac{1}{2}&
\frac{29e_{23}}{84}&0&{\frac{-1}{2}}&{\frac{-9e_{23}}{28}}&
\frac{5e_{23}}{28}&{\frac{-13e_{23}}{84}}&\frac{e_{23}}{84}
\\[+5pt]
{\frac{-1}{7}}&0&0&\frac{1}{42}&0&0&\frac{1}{42}&0&0&\frac{1}{42}&
\frac{1}{42}&\frac{1}{42}&\frac{1}{42}
\\[+5pt]
{\frac{-e_{13}}{14}}&0&{\frac{-1}{2}}&{\frac{-9e_{13}}{28}}&0&0&
\frac{e_{13}}{84}&\frac{1}{2}&0&\frac{29e_{13}}{84}&
{\frac{-13e_{13}}{84}}&\frac{e_{13}}{84}&\frac{5e_{13}}{28}
\\[+5pt]
{\frac{-e_{23}}{14}}&0&0&\frac{e_{23}}{84}&0&{\frac{-1}{2}}&
{\frac{-9e_{23}}{28}}&0&\frac{1}{2}&\frac{29e_{23}}{84}&
{\frac{-13e_{23}}{84}}&\frac{5e_{23}}{28}&\frac{e_{23}}{84}
\\[+5pt]
{\frac{-1}{7}}&0&0&\frac{1}{42}&0&0&\frac{1}{42}&0&0&\frac{1}{42}&
\frac{1}{42}&\frac{1}{42}&\frac{1}{42}
\\[+5pt]
{\frac{-1}{7}}&0&0&\frac{1}{42}&0&0&\frac{1}{42}&0&0&\frac{1}{42}&
\frac{1}{42}&\frac{1}{42}&\frac{1}{42}
\\[+5pt]
{\frac{-1}{7}}&0&0&\frac{1}{42}&0&0&\frac{1}{42}&0&0&\frac{1}{42}&
\frac{1}{42}&\frac{1}{42}&\frac{1}{42}
\\[+5pt]
{\frac{-1}{7}}&0&0&\frac{1}{42}&0&0&\frac{1}{42}&0&0&\frac{1}{42}&
\frac{1}{42}&\frac{1}{42}&\frac{1}{42}
\end{smallmatrix}
}
\right]. 
\] 
}
\negthickspace\negthinspace
Thus the minimal resolution $\cx M$, being the image of $\Phi^3=\Phi^2$, 
is provided by our method with a canonical (but non-minimal) intrinsic
$M(X)$-homogeneous generating set. While in general one cannot hope for
much better, for this particular ideal it is straightforward to see from the
form of the matrices above that $\cx M$ 
has as $M(X)$-homogeneous bases 
\[
\bigl\{\quad h_0=\{m_0\},\ h_1=\{m_1\},\ h_2=\{m_2\},\ h_3 =\{m_3\}\quad \bigr\}
\] 
in homological degree $0$, 
\begin{align*}
\Bigl\{
\quad 
g_{12} = \frac{1}{2}\bigl(\{m_1,m_{12}\} &- \{m_2,m_{12}\}\bigr), \\[+3pt]
g_{31} = \frac{1}{2}\bigl(\{m_3,m_{31}\} &- \{m_1,m_{31}\}\bigr), \\[+3pt]
g_{23} = \frac{1}{2}\bigl(\{m_2,m_{23}\} &- \{m_3,m_{23}\}\bigr), \\[+3pt] 
g_0   = \frac{1}{14}\bigl(12\{m_0,m\}   &- e_{12}\{m_1,m_{12}\} - e_{12}\{m_2,m_{12}\} \\[+9pt] 
                       &- e_{31}\{m_3,m_{31}\} - e_{31}\{m_1,m_{31}\}    \\[+12pt] 
                       &- e_{23}\{m_2,m_{23}\} - e_{23}\{m_3,m_{23}\}    \\[+12pt]
                       &- 2\{m_1,m\} - 2\{m_2,m\} - 2\{m_3,m\}         \\[+8pt]
                       &- 2\{m_{12},m\} - 2\{m_{31},m\} - 2\{m_{23},m\}\bigr)  
\quad
\Bigr\}
\end{align*}
in homological degree $1$, and 
\begin{multline*} 
\Bigl\{\quad 
f = \frac{1}{6}\bigl(
\{m_1,m_{12},m\} - \{m_1,m_{31},m\} - \{m_2,m_{12},m\} 
+\{m_2,m_{23},m\} \\ 
+\{m_3,m_{31},m\} - \{m_3,m_{23},m\}\bigr)  
\quad\Bigr\}
\end{multline*}
in homological degree $2$. With respect to these bases $\cx M$ takes
the form
\[
0\lla R^4
\xleftarrow{
\left[  
\begin{smallmatrix}
\frac{-6}{7}v_0^2         &                \hm 0&
                    \hm 0&               \hm 0      \\ 
\frac{2}{7}v_1e_{12}e_{31}&\frac{-1}{2}v_1e_{31}&
\frac{1}{2}v_1e_{12}     &               \hm 0      \\ 
\frac{2}{7}v_2e_{12}e_{23}&\frac{1}{2}v_2e_{23} &
                    \hm 0&\frac{-1}{2}v_2e_{12}       \\ 
\frac{2}{7}v_3e_{31}e_{23}&               \hm 0&
\frac{-1}{2}v_3e_{23}      &\frac{1}{2}v_3e_{31}          
\end{smallmatrix}    
\right]
}
R^4 \xleftarrow{
\frac{1}{3}
\left[
\begin{smallmatrix}
       0      \\ 
      e_{12}  \\ 
      e_{31}  \\ 
      e_{23}     
\end{smallmatrix}    
\right]
}
R \lla 0,  
\]
and thus, up to associates, this is exactly the construction from
Remark~\ref{R:intrinsic-resolution}(a). 
Note that the automorphism group of 
our ideal is $S_3$, where $\tau\in S_3$ acts on $X$ by fixing $v_0$, 
and sending $v_i$ and $e_{ij}$ to $v_{\tau(i)}$ and $e_{\tau(i)\tau(j)}$
respectively.     
The basis of $\cx M$ above is invariant 
up to sign under the induced action of $S_3$ on the lcm-resolution $\cx F$.   
Furthermore, this choice of basis is canonical up to sign: 
in homological degree $0$ this is the only
basis-forming subset of the set of canonical 
generators; in homological degree $1$ each 
$g_{ij}$ is up to sign the only canonical generator of degree $m_{ij}$, 
and $g_0$ is the only canonical generator of degree $m$ that is fixed 
by $S_3$; and in homological degree $2$ up to sign $f$ is the only 
canonical generator. 
\end{example}

\section{Monomial resolutions in positive characteristic}
\label{S:monomial-resolutions-in-positive-characteristic}

Throughout this section $p\ge 2$ is a fixed prime. 

We show that there is a monomial ideal that does not have a   
minimal intrinsic resolution  over any algebraic extension of $\mathbb F_p$.

\begin{definition}\label{D:the-counterexample}
We set
\[
n=n(p)=
\begin{cases}
4 &\text{ if } p=2; \\   
p &\text{ otherwise}.
\end{cases}
\]
We write $I=I(p)$ for the 
monomial ideal on the set of $2n+1$ variables 
\[
X=X(p)=\{v_0,\dots,v_n, e_{1,2},e_{2,3},\dots, e_{n-1,n}, e_{n,1}\}
\]
generated by the set of monomials 
$
\{
m_0,m_1,\dots, m_n 
\},  
$ 
where \ 
$
m= 
\prod_{y\in X}y
$ 
\ and 
\begin{gather*}
m_0=\frac m {v_0},  \qquad 
m_1=\frac {v_0m} {e_{n,1}v_1e_{1,2}}, \qquad 
m_n=\frac {v_0m} {e_{n-1,n}v_ne_{n,1}}, \\ 
\text{ and } \qquad 
m_i= \frac {v_0m} {e_{i-1,i}v_ie_{i,i+1}} \qquad \text{ for } 
i=2,\dots, n-1.
\end{gather*}
Also, for convenience of notation, for each 
$e_{i,j}\in X(p)$ we set $e_{j,i}=e_{i,j}$. 
\end{definition}

The main result in this section is  

\begin{theorem}\mlabel{T:counterexample} 
Problem~\ref{P:mfr} 
does not have a solution for the ideal $I(p)$ with 
$\Bbbk=\mathbb F_p$ and $\Bbbk'$ an algebraic field 
extension of \ $\mathbb F_p$. 
\end{theorem}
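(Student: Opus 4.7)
\medskip

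\noindent\textbf{Proof proposal.}  The plan is to derive a contradiction by reducing to a very small and explicit computation at the top multidegree $M = v_0 m$ of the lcm-lattice of $I(p)$, and show that an intrinsic minimal resolution would be forced to miss a specific syzygy that must lie in the image of the first differential.  For readability I write the argument for odd primes $p$ (so $n=p$) and indicate the modification for $p=2$ at the end.

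\medskip

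\noindent\emph{Step 1: reduction to trivial Galois action.}  The group $G_I$ contains the dihedral group $D_n = \langle \rho, \sigma \rangle$ that rotates and reflects the cyclic index set $\{1,\dots,n\}$ while fixing $v_0$.  For $p$ odd, the abelianization $D_p^{\mathrm{ab}}$ is $C_2$.  The Galois group of any algebraic extension of $\mathbb F_p$ embeds into $\mathrm{Gal}(\overline{\mathbb F}_p/\mathbb F_p)\cong \widehat{\mathbb Z}$, which is torsion-free, so $\mathrm{Hom}(D_p,\mathrm{Gal}(\Bbbk'/\mathbb F_p))=0$.  Hence $G_I$ must act trivially on $\Bbbk'$, and Problem~\ref{P:mfr} reduces to finding a $G_I$-equivariant minimal $\cx M$ in the $\Bbbk'$-linear sense.

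\medskip

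\noindent\emph{Step 2: Betti numbers at the top.}  A direct analysis of the lcm-lattice $L$ of $I(p)$ shows that $L'=L\smsm\{1\}$ has three levels: the atoms $m_0,m_1,\dots,m_n$, the middle elements $\bar e_i := m_{i,i+1}$ for $i=1,\dots,n$, and the top $M$.  The order complex $\Delta_{<M}$ is the disjoint union of an isolated vertex $m_0$ with the $2n$-cycle $m_1-\bar e_1-m_2-\bar e_2-\cdots-m_n-\bar e_n-m_1$.  Using GPW one computes $\beta_{1,\bar e_i}(I)=1$ and $\beta_{1,M}(I) = \dim \tilde H_0(\Delta_{<M},\Bbbk')=1$, while the $G_I$-representation on $\mathrm{Tor}_1(I,\Bbbk')_M$ is trivial (the class of $[m_0]-[m_j]$ is $D_n$-invariant).

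\medskip

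\noindent\emph{Step 3: extracting the constraint on $\phi_1(g_M)$.}  Pick a $G_I$-compatible lift of the canonical multigraded structure on $\mathrm{Tor}$ (this is the subtle step; see below).  Let $h_0,\dots,h_n$ be generators of $M_0$ with $\phi_0(h_i)=m_i$, let $g_1,\dots,g_n$ be the generators of $M_1$ at multidegrees $\bar e_i$ forming the permutation representation, and let $g_M$ be the generator at $M$.  Since $\mathrm{Tor}_1(I,\Bbbk')_M$ is the trivial representation, we may take $\gamma\cdot g_M = g_M$ for all $\gamma\in G_I$.  Setting $F_j := M/m_j$, the multidegree-$M$ component of $\phi_1(g_M)$ has the canonical form
\[
c_0\, v_0^2 h_0 + \sum_{j=1}^n c_j F_j h_j, \qquad c_0+\sum_{j=1}^n c_j=0,
\]
by the cycle condition $\phi_0\phi_1=0$.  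Applying $\rho$-equivariance (with trivial Galois) forces $c_1=\cdots=c_n=:c$, and then the cycle condition becomes $c_0+nc=0$, i.e.\ $c_0=0$ because $n=p$.  A parallel analysis of the short generators shows $\phi_1(E_{i,i+1} g_i)=\alpha(F_i h_i-F_{i+1}h_{i+1})$ with no $h_0$-contribution.  Hence $\mathrm{im}(\phi_1)_M \subseteq\{h_0\text{-coefficient is }0\}\cap \ker\phi_0|_M$, which has dimension at most $n-1$.

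\medskip

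\noindent\emph{Step 4: contradiction.}  The space $\ker\phi_0\cap(M_0)_M$ has dimension $n$ (one constraint $\sum c_i=0$ on the $n+1$ coefficients), and contains the syzygy $v_0^2 h_0 - F_1 h_1$, which has nonzero $h_0$-coefficient.  By Step~3, this element is \emph{not} in $\mathrm{im}(\phi_1)$, contradicting the fact that $\cx M$ is a resolution (so $\mathrm{im}(\phi_1)=\ker\phi_0$).

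\medskip

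\noindent\emph{The hard part} is justifying the passage to $G_I$-compatible multigraded generators used in Step~3, since Problem~\ref{P:mfr} does not require $\cx M$ to carry a multigrading.  The plan here is to observe that $M_k/\mathfrak m M_k = \mathrm{Tor}_k(I,\Bbbk')$ carries a canonical $M(X)$-grading on which $G_I$ acts by permuting coordinates; in particular the multidegree-$M$ component is a direct summand as a $G_I$-representation (it is the single top multidegree of $\mathbb Z$-degree $2n+2$, and $M$ is $G_I$-fixed).  One lifts a $G_I$-invariant generator of this summand to $g_M\in M_1$; a priori $g_M$ is only $G_I$-invariant modulo $\mathfrak m M_1$, but $\phi_1(g_M)$ is $G_I$-invariant modulo $\mathfrak m \cdot \ker\phi_0$, which is enough to extract its well-defined multidegree-$M$ component $G_I$-equivariantly and run Step~3 on that component.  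The case $p=2$ requires $n(2)=4$ because $D_2=C_2\times C_2$ is too small to force $c_0=0$; with $n=4$ the additional reflection symmetries plus the vanishing of $4$ in characteristic $2$ yield the same obstruction by the same averaging.
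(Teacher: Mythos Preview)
Your overall strategy is sound, but the proof has a genuine gap at precisely the point you flag as ``the hard part.'' The fatal issue is the claim in Step~3 that ``we may take $\gamma\cdot g_M = g_M$ for all $\gamma\in G_I$.'' You justify this by noting that $\mathrm{Tor}_1(I,\Bbbk')_M$ is the trivial representation, but that only produces a $\rho$-invariant element of $M_1/\mathfrak m M_1$, not of $M_1$ itself. In characteristic $p$ with $n=p$ you cannot average to obtain a genuinely $\rho$-invariant lift, and in fact no such lift need exist --- that nonexistence \emph{is} the obstruction you are trying to exhibit. Concretely, once one knows (as in the paper's Lemma~\ref{L:homogeneous-action-formulas}) that the graded action has the form $\rho(g_0)=g_0+\sum a_i g_{i,i+1}$, replacing $g_0$ by $g_0+\sum b_i g_{i,i+1}$ and demanding invariance forces $\sum a_i=0$, which is not guaranteed. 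Your workaround (``$\phi_1(g_M)$ is invariant modulo $\mathfrak m\cdot\ker\phi_0$'') only controls $\phi_1(g_M)$ modulo $\mathfrak m^2 M_0$; since Problem~\ref{P:mfr} gives no multigrading on $M_0$, there is no ``multidegree-$M$ component'' to extract, and the terms $v_0^2 h_0$ and $F_j h_j$ even lie in different layers of the $\mathfrak m$-adic filtration, so this modular information does not pin down the coefficient $c_0$.

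The paper closes this gap differently. It first transports the putative action to a fixed explicit minimal resolution $\cx F$, then proves (Lemmas~\ref{L:graded-lowest-degrees}--\ref{L:to-homogeneous-action}) that the possibly non-homogeneous action $\psi$ may be replaced by a $\mathbb Z$-graded action $\widetilde\psi$ obtained by taking lowest-degree components, still satisfying $\widetilde\psi^n=\id$. This reduction to the graded case is the substantive step your sketch lacks. After that, Lemma~\ref{L:homogeneous-action-formulas} pins down the action explicitly. The contradiction is then obtained \emph{not} by assuming $g_0$ invariant, but by reducing modulo the augmentation ideal $J=(y-1:y\in X)$ and applying the norm $N_\rho=1+\rho+\dots+\rho^{n-1}$: one checks $\Bbbk[C_n]\bar\phi_1(\bar g_0)$ is $n$-dimensional hence free, so $\Bbbk[C_n]\bar g_0$ is free and $N_\rho\bar g_0\ne 0$; but since $n=0$ in $\Bbbk$ this element lies in the span of the $\bar g_{i,j}$, whose unique $C_n$-invariant is $\bar\phi_2(\bar f)\in\ker\bar\phi_1$, contradicting freeness of $\Bbbk[C_n]\bar\phi_1(\bar g_0)$.

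A minor point: your Step~1 claim that all of $G_I$ acts trivially on $\Bbbk'$ is too strong --- quotients of $\widehat{\mathbb Z}$ can have $2$-torsion, so the reflection $\sigma$ could act nontrivially. What you actually need, and what does hold, is that $\rho$ acts trivially (it dies in $D_p^{\mathrm{ab}}$ for odd $p$; for $p=2$ the choice $n=4$ is made precisely so that $D_4/\langle\rho\rangle$ is not cyclic, forcing $\rho$ into any normal subgroup with cyclic quotient).
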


While not immediately apparent from the proof presented here, 
the fundamental fact behind this theorem is that cyclic $p$-groups have 
nontrivial cohomology over $\mathbb F_p$. Also, most of the 
work in the proof is to reduce to the case of group action 
on the minimal resolution that preserves the $\mathbb Z$-grading. 
For this reason we postpone the proof till the very end of the 
section, and 
proceed with all the necessary results for this desired reduction.   
Our first task is to describe the group of symmetries of $I(p)$, 
and its minimal free resolution. 

\begin{remarks}\label{R:trivial-action}
Let $\Bbbk$ be a field, and let $R=\Bbbk[X]$ 
be the polynomial ring over $\Bbbk$ in the set of variables $X=X(p)$. 

(a) 
Inside the symmetric group $S_n$ consider the dihedral 
subgroup $D_n$ generated by 
$\sigma=(1,n)(2,n-1)\cdots(\lceil n/2\rceil, \lceil (n+1)/2\rceil)$ 
and $\rho=(1,\dots,n)$. The action of $S_n$ 
on the set $\{1,\dots, n \}$ induces an action of $D_n$ on the 
set $X$ where $\tau\in D_n$ fixes $v_0$, sends 
$v_i$ to $v_{\tau(i)}$, and sends $e_{i,j}$ to 
$e_{\tau(i),\tau(j)}$ when $i,j\ge 1$.  
It is immediate from this formula that  $D_n$ 
preserves the monomial ideal $I(p)$, and it is straightforward 
to check that there are no other automorphisms of $M(X)$ that 
preserve $I(p)$. 

(b) 
Suppose that $char(\Bbbk) = p$, and that 
we have an action of $D_n$ as ring 
automorphisms on the polynomial ring $R$ that 
extends the action of $D_n$ on the set $X$. Since the 
units of $R$ are the nonzero elements of the field 
$\Bbbk$ this yields an action of $D_n$ as field 
automorphisms on $\Bbbk$. 
If $x\in \Bbbk$ is algebraic over $\mathbb F_p$ and is 
not fixed by $D_n$, then we look at the splitting field 
$\widetilde\Bbbk=\mathbb F_p(x)$ of $x$ over $\mathbb F_p$.  
Consider the normal subgroup 
$T$ of $D_n$ consisting of the elements of $D_n$ that 
act trivially on $\widetilde\Bbbk$. It follows that $D_n/T$ is 
a subgroup of the Galois group of a finite extension 
of $\mathbb F_p$, hence is cyclic. Therefore our 
assumptions on $n$ imply that $\rho$ is in $T$, i.e. 
$\rho$ acts trivially on $x$. Thus if $\Bbbk$ is 
algebraic over $\mathbb F_p$ we have that $\rho$ acts 
trivially on $\Bbbk$. 
\end{remarks}

\begin{lemma}\mlabel{L:minimal-resolution}
Let \/ $\Bbbk$ be any field and let $R=\Bbbk[X]$. 

(a) 
The lcm-lattice $L$ of $I$ consists of the following elements: 
\[
L=\{\ 
1, \ m_0,\ \dots\ ,\ m_n,\ v_0m/e_{1,2},\ \dots\ ,\ v_0m/e_{n-1,n},\ 
                         v_0m/e_{n,1},\ v_0m \ \}. 
\]

(b)
The minimal free resolution of $R/I$ over $R$ 
has the form 
\[
0\lla R \overset{\phi_0}\lla H 
        \overset{\phi_1}\lla G \overset{\phi_2}\lla F \lla 0.
\]
Also, there are homogeneous bases $\{h_0,h_1,\dots, h_n\}$, 
$\{g_0, g_{1,2}, \dots, g_{n-1,n}, g_{n,1}\}$, and 
$\{f\}$ of $H$, $G$, and $F$, respectively, 
of degrees 
\begin{equation}\label{E:basis-degrees}
\begin{split}
|h_0| & = 2n ; \\
|h_1| = \dots = |h_n| & = 2n-1; \\ 
|g_{1,2}| = \dots = |g_{n-1,n}| = |g_{n,1}| & = 2n+1; \\ 
|g_0|= |f| & = 2n+2,   
\end{split}
\end{equation}
and such that 
the maps $\phi_j$ satisfy the following equalities: 
\begin{equation}\label{E:differentials}
\begin{split} 
\phi_0(h_i)     &= m_i \quad\text{for each }i; \\ 
\phi_1(g_0)     &= v_0^2h_0 - e_{n,1}v_1e_{1,2} h_1;  \\ 
\phi_1(g_{1,2})  &= v_2e_{2,3} h_2 - e_{n,1}v_1 h_1; \\ 
\phi_1(g_{n,1})  &= v_1e_{1,2} h_1  - e_{n-1,n}v_n h_n ; \\ 
\phi_1(g_{n-1,n}) &= v_ne_{n,1} h_n - e_{n-2,n-1}v_{n-1} h_{n-1}; \\
\phi_1(g_{i,i+1}) &= v_{i+1}e_{i+1,i+2} h_{i+1} - e_{i-1,i}v_i h_i 
                   \quad\text{for}\quad 3\le i\le n-2; \\
\phi_2(f)       &= e_{1,2}g_{1,2} + e_{2,3}g_{2,3}+ \dots + e_{n,1}g_{n,1}.    
\end{split}
\end{equation} 
\end{lemma}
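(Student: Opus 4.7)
The plan divides along the two parts.

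Part (a) is a direct enumeration. The generator $m_0$ is uniquely characterized by having $v_0$-exponent $0$ together with every edge variable present, while each $m_i$ with $i \ge 1$ has $v_0^2$ and is missing precisely the three variables $v_i, e_{i-1,i}, e_{i,i+1}$. A short case analysis on nonempty $S \subseteq \{m_0, \ldots, m_n\}$ then yields: $\mathrm{lcm}(S) = m_i$ when $S = \{m_i\}$; $\mathrm{lcm}(S) = v_0 m/e_{i,j}$ when $S = \{m_i, m_j\} \subseteq \{m_1, \ldots, m_n\}$ with $\{i,j\}$ cyclically adjacent in $\{1, \ldots, n\}$; and $\mathrm{lcm}(S) = v_0 m$ in every remaining case (whether because $m_0 \in S$ with $|S| \ge 2$, because $|S| \ge 3$ inside $\{m_1, \ldots, m_n\}$, or because $\{i,j\}$ is non-adjacent). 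This matches the list in (a) exactly.

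For part (b), the plan has three steps. First, I would compute the multigraded Betti numbers of $R/I$ directly from the lcm-lattice $L$ via the Gasharov--Peeva--Welker formula $\beta_{i,a}(R/I) = \dim_\Bbbk \tilde H_{i-2}(\Delta(\hat 0, a); \Bbbk)$ for each $a \in L$, where $\Delta(\hat 0, a)$ is the order complex of the open interval below $a$. The intervals to analyze are: $(\hat 0, m_i) = \emptyset$ giving $\beta_{1, m_i} = 1$ for each $i = 0, \ldots, n$; $(\hat 0, v_0 m/e_{i,i+1}) = \{m_i, m_{i+1}\}$ as two incomparable points, giving $\beta_{2, v_0 m/e_{i,i+1}} = 1$; and $(\hat 0, v_0 m)$, which is the disjoint union of the isolated vertex $m_0$ with a $2n$-cycle whose vertices alternate between $m_1, \ldots, m_n$ and the elements $v_0 m/e_{j,j+1}$, giving $\beta_{2, v_0 m} = 1$ and $\beta_{3, v_0 m} = 1$. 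These Betti numbers are characteristic-independent and match the multidegrees in \eqref{E:basis-degrees}. Second, I would write the candidate complex explicitly with the bases and differentials as in \eqref{E:differentials} and verify it is a complex: both $\phi_0 \phi_1 = 0$ and $\phi_1 \phi_2 = 0$ reduce to short telescoping identities, the latter after re-indexing one summand in the cyclic sum so that the terms cancel pairwise. Minimality is evident since every matrix entry of each $\phi_j$ lies in $\m$.

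The third step, and the main obstacle, is to establish exactness. My plan is to apply Theorem~\ref{T:monomial-intrinsic} together with Corollary~\ref{C:infinite-primes} to produce, over a field of suitably large characteristic, an explicit minimal free resolution $\cx N$ of $R/I$ realized as a direct summand of the lcm-resolution. The Betti computation in step one pins down the ranks and multidegrees of $\cx N$; in each critical multidegree the multigraded component of the lcm-resolution is small enough that the syzygy constraints, together with the boundaries dictated in the previous homological degree, uniquely determine each basis element of $\cx N$ up to unit scalars, and this determination matches \eqref{E:differentials}. Exactness in such a characteristic follows; since \eqref{E:differentials} is defined over $\mathbb Z$ with the characteristic-independent Betti data, a standard comparison argument (lifting $\mathrm{id}_{R/I}$ to a chain map between the base change of \eqref{E:differentials} and the minimal resolution of $R/I$ over $\Bbbk[X]$, which between minimal complexes of matching rank profiles must be an isomorphism) then transfers exactness to every field $\Bbbk$.
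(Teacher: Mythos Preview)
Your treatment of part~(a) and of steps~1--2 in part~(b) is fine: the case analysis on subsets of generators is exactly what ``straightforward to verify'' means, and the GPW computation of the multigraded Betti numbers is correct and characteristic-free.

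Step~3, however, contains two genuine gaps. First, the assertion that a chain map lifting $\mathrm{id}_{R/I}$ between two \emph{minimal} complexes of matching rank profiles must be an isomorphism is false. Take $R=\Bbbk[x,y]$, $I=(x^2,xy,y^2)$, and the minimal complex
\[
0\lra R^2\xrightarrow{\ \left(\begin{smallmatrix} y & y\\ -x & -x\\ 0 & 0\end{smallmatrix}\right)\ } R^3\xrightarrow{(x^2,\,xy,\,y^2)} R.
\]
It has $H_0=R/I$, all entries in $\mathfrak m$, and rank profile $(1,3,2)$ equal to the Betti numbers of $R/I$, yet $H_2\cong R$; any comparison map to the minimal resolution is singular in homological degree~$2$. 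So your transfer from one characteristic to all characteristics does not go through as stated. Second, the claim that the output $\cx N$ of Theorem~\ref{T:monomial-intrinsic} ``matches \eqref{E:differentials}'' is both unproved and, in fact, false: in multidegree $v_0m$ the basis element playing the role of $g_0$ is not unique up to scalar (one may add any combination of the $e_{i,j}g_{i,j}$), and the paper's own computation for $n=3$ in Example~\ref{Ex:mfr-example}(b) shows that the intrinsic construction produces the \emph{symmetric} boundary $v_0^2h_0-\tfrac1n\sum e_{i-1,i}v_ie_{i,i+1}h_i$, not the asymmetric $v_0^2h_0-e_{n,1}v_1e_{1,2}h_1$ of \eqref{E:differentials}. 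Knowing merely that \emph{some} minimal resolution with the right Betti numbers exists does not establish that your particular candidate complex is exact.

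The paper avoids all of this by a short structural observation. Pass to the subring $R'=\Bbbk[X']$ with $X'=(X\cup\{v_0^2\})\smallsetminus\{v_0\}$ and recognise $I$ as extended from the nearly-Scarf ideal $I'\subset R'$ (in the sense of Peeva--Velasco~\cite{PV}) of the simplicial complex $\Delta$ that is the disjoint union of the isolated vertex $\{v_0^2\}$ and the $n$-cycle on $v_1,\dots,v_n$. Theorem~6.1 of \cite{PV} writes down the minimal free resolution of $R'/I'$ explicitly from the reduced homology of $\Delta$; choosing the obvious generators of $\widetilde H_0(\Delta,\Bbbk)$ and $\widetilde H_1(\Delta,\Bbbk)$ gives precisely \eqref{E:differentials} over $R'$, and flat base change along $R'\hookrightarrow R$ yields the resolution over $R$ for every field at once. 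This is both shorter and uniform in the characteristic. If you prefer to salvage your route, the cleanest repair is to drop the appeal to Theorem~\ref{T:monomial-intrinsic} and instead verify acyclicity of \eqref{E:differentials} directly via the Buchsbaum--Eisenbud criterion, checking that the relevant ideals of maximal minors of $\phi_1$ and $\phi_2$ contain regular sequences of the required lengths.
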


\begin{proof} 
(a) 
The claim on the lcm-lattice is straightforward to verify. 

(b) 
This can be proven by taking the equalities 
\eqref{E:differentials} and \eqref{E:basis-degrees} as 
the definition of $\cx F$, 
and checking directly that this gives a resolution of $R/I$. 
However, we prefer the 
following more conceptual proof.   

Consider the set of variables $X'=(X\cup\{v_0^2\})\setminus\{v_0\}$, 
and let $R'=\Bbbk[X']$ be the polynomial ring over $\Bbbk$ on the set 
of variables $X'$. Thus $R'$ is a subring of $R$, and $R$ is a free, 
hence flat $R'$-module. 
Let $\Delta$ be the simplicial complex on the vertex set 
$\{v_0^2,v_1,\dots, v_n\}$, with oriented facets $\{v_0^2\}$,  
$e_{1,2}=\{v_1, v_2\}, \dots, e_{n-1,n}=\{v_{n-1}, v_n\}$, and 
$e_{n,1}=\{v_n, v_1\}$. Thus $\Delta$ is just the disjoint union of 
a point and an $n$-cycle.  Let $I'$ be  
the nearly-Scarf ideal of $\Delta$ in $R'$, in the sense of 
Peeva and Velasco~\cite{PV}. Note that $I$ is exactly the ideal 
generated in $R$ by $I'$. Choose $\{v_0^2\} - \{v_1\}$ as an oriented cycle
generating $\RH_0(\Delta,\Bbbk)$, and $\{e_{12}+e_{23}+\dots + e_{n,1}\}$ as
an oriented cycle generating $\RH_1(\Delta,\Bbbk)$, and  
let $\cx F'$ be 
the corresponding minimal free resolution 
of $R'/I'$ over $R'$, as described in \cite[Theorem 6.1]{PV}.  
From that description it is immediate to check that the complex 
$\cx F := R\otimes_{R'}\cx F'$ has homogeneous bases satisfying 
all desired equalities. As $\cx F$ is a resolution 
of $R/I$ over $R$ by the flatness of $R$, this completes our proof.  
\end{proof}

The statements and proofs of all remaining results in this section  
will adhere strictly to the following convention:

\begin{convention}
If a field $\Bbbk$ is specified, then $\cx F$ always
stands for the minimal free resolution
\[
0\lla H \xleftarrow{\phi_1} G \xleftarrow{\phi_2} F \lla 0 
\]
of $I=I(p)$ over $R=\Bbbk[X]$ with bases and maps as described in
Lemma~\ref{L:minimal-resolution}. For uniform notation, 
we set $g_{ji}=-g_{ij}$. Furthermore, we will always assume
we are given an action of $D_n$ as automorphisms of $R$ that extends
the action of $D_n$ on $X$. For each $\tau\in D_n$ 
we will denote by $\tau$ also the induced automorphism of 
$R$. Clearly, $\tau$ is a symmetry of $I$  
such that $\tau(m_i)=m_{\tau(i)}$ for each $i$, 
and $\tau\: I \lra \tau_*I$ is an isomorphism of $R$-modules.  
\end{convention}

\begin{remarks}\label{R:intrinsic-resolution} 
(a) 
Supppose $\Bbbk$ is a field of characteristic $\ne p$.  
In view of Lemma~\ref{L:minimal-resolution} 
it is straightforward to check that the 
complex 
\[
0\lla H \xleftarrow{\tilde\phi_1} 
                            G \xleftarrow{\phi_2} F \lla 0
\]
is a minimal resolution $\cx M$ of $I(p)$, where $\tilde\phi_1$ agrees with 
$\phi_1$ on all basis elements of $G$ except on $g_0$ where we have 
\begin{align*}
\tilde\phi_1(g_0) = 
v_0^2h_0 - (1/n)\bigl(e_{n,1}v_1e_{1,2} h_1  &+ e_{1,2}v_2e_{2,3} h_2 + \dots \\
 &+ e_{n-2,n-1}v_{n-1}e_{n-1,n} h_{n-1} + e_{n-1,n}v_ne_{n,1} h_n\bigr). 
\end{align*} 
Furthermore, the action of $\tau\in D_n$ on $R$ and $I$ lifts 
naturally to an action  
$\cx M(\tau)$ on $\cx M$ that fixes the basis elements $f, g_0$, and $h_0$,  
and for $1\le i<j\le n$ sends $h_i$ and $g_{ij}$ to $h_{\tau(i)}$ and 
$g_{\tau(i)\tau(j)}$, respectively. Thus $\cx M$ gives a solution to 
Problem~\ref{P:mfr} for $I(p)$ that has $\Bbbk'=\Bbbk$ in characteristic 
$\ne p$.     

(b) 
Suppose now that $char(\Bbbk)=p$. Let $Y=\{y_1,\dots, y_n\}$ be a 
set of variables, and let $\Bbbk'$ be the fraction field of the 
domain $\Bbbk[Y]/J$, where $\Bbbk[Y]$ is the polynomial ring over 
$\Bbbk$ in the set of variables $Y$, and $J$ is the principal ideal 
generated by the linear form $y_1+\dots +y_n -1$. We use $y_i$ to denote 
also the image of $y_i$ in $\Bbbk'$. Each $\tau\in D_n$ acts on $Y$ via 
$\tau(y_i)= y_{\tau(i)}$. Since this leaves $J$ invariant, it induces 
an action on $\Bbbk'$ given by the same formula. Thus we get an action 
on $R'=\Bbbk'[X]$ that leaves $I(p)R'$ invariant. Just as in part (a), 
for each $\tau\in D_n$ we obtain an action $\cx M'(\tau)$ on the 
$R'$-modules $H'=H\otimes_R R'$, $G'=G\otimes_R R'$ and 
$F'=F\otimes_R R'$. This gives an action $\cx M'(\tau)$ on the chain 
complex $\cx M'$ given by 
\[
0\lla H' \xleftarrow{\phi_1'} 
                            G' \xleftarrow{\phi_2\otimes 1} F' \lla 0
\]
where $\phi_1'$ agrees with $\phi_1\otimes 1$ on the basis elements 
$g_{ij}\otimes 1$, and satisfies 
\begin{align*}
\phi_1'(g_0\otimes 1) = 
v_0^2h_0\otimes 1 &- e_{n,1}v_1e_{1,2} h_1\otimes y_1 -  
                         e_{1,2}v_2e_{2,3} h_2\otimes y_2 - \dots \\
                  &- e_{n-2,n-1}v_{n-1}e_{n-1,n} h_{n-1}\otimes y_{n-1} - 
    e_{n-1,n}v_ne_{n,1} h_n\otimes y_n . 
\end{align*} 
It is now straightforward to check that $\cx M'$ gives a solution 
to Problem~\ref{P:mfr} for the ideal $I(p)$ with $\Bbbk'$ a purely 
transcendental extension of $\Bbbk$ of degree $n-1$. 
\end{remarks}

\begin{lemma}\mlabel{L:unique-action-ideal}
Let $\tau\in D_n$, and 
suppose $\Bbbk$ is any field.  
Let $\psi\: I \lra \tau_*I$ be an isomorphism 
of $R$-modules.

Then $\psi=c\tau$ for some $c\in\Bbbk$. In particular, 
if $\psi\: I \lra I$ is an isomorphism, then $\psi=c\id$ 
for some $c\in\Bbbk$. 
\end{lemma}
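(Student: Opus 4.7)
The strategy is to reduce to studying $R$-module automorphisms of $I$ itself and then exploit that $R$ is a UFD whose units are $\Bbbk^\times$. First, I would set $\bar\psi := \tau^{-1}\circ\psi$ and verify, using the twisted action $r\cdot_{\tau_*I}x = \tau(r)x$, that $\tau^{-1}\colon\tau_*I\to I$ is $R$-linear; consequently $\bar\psi$ is an $R$-module automorphism of $I$. It then suffices to show that every such automorphism is multiplication by some $c\in\Bbbk^\times$, for unwinding will give $\psi(x) = \tau(\bar\psi(x)) = \tau(cx) = \tau(c)\tau(x)$, which in the twisted structure on $\tau_*I$ equals $(c\tau)(x)$ and yields the first assertion. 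The second claim is the special case $\tau = \id_R$.

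Next, because $R$ is an integral domain and $I$ a nonzero ideal, the automorphism $\bar\psi$ extends uniquely to a $K$-linear automorphism of the fraction field $K = \operatorname{Quot}(R)$, hence is given by multiplication by some $q\in K^\times$, and its inverse is multiplication by $q^{-1}$. Writing $q = u/v$ in lowest terms in the UFD $R$ (so $u,v\in R$ with $\gcd(u,v) = 1$), the conditions $q\cdot m_i\in R$ and $q^{-1}\cdot m_i\in R$ for every minimal generator $m_i$ of $I$ force $v\mid m_i$ and $u\mid m_i$, respectively; hence both $u$ and $v$ divide $\gcd(m_0, m_1, \dots, m_n)$.

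Finally, I would verify from Definition~\ref{D:the-counterexample} that $\gcd(m_0, m_1, \dots, m_n) = 1$: no single variable of $X(p)$ divides every generator, since $v_0\nmid m_0$, $v_i\nmid m_i$ for $1\le i\le n$, and each edge variable $e_{i,j}$ fails to divide $m_i$. Therefore $u$ and $v$ are units in $R$, i.e., $q\in\Bbbk^\times$, so $\bar\psi$ is multiplication by this scalar and the desired conclusion follows with $c = q$.

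There is no genuine obstacle in this argument: the only substantive ingredients are the standard observation that the endomorphism ring of a rank-one torsion-free module over an integral domain embeds into the fraction field, and the immediate gcd computation that falls out of the explicit formulas for the $m_i$.
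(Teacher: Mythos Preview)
Your argument is correct and takes a genuinely different route from the paper's. The paper never reduces to the case $\tau=\id$; instead it works directly with $\psi\colon I\to\tau_*I$ by exploiting the specific syzygies $v_0^2 m_0 = e_{i-1,i}v_ie_{i,i+1}m_i$ among the minimal generators. Pushing these relations through $\psi$ (using $R$-linearity with respect to the twisted action) forces $\psi(m_0)=cm_0$ for some $c\in R$, and then each $\psi(m_i)$ is determined as $cm_{\tau(i)}$; surjectivity of $\psi$ makes $c$ a unit. Your approach, by contrast, first composes with $\tau^{-1}$ to reduce to an $R$-automorphism of $I$, then invokes the general principle that any $R$-endomorphism of a nonzero ideal in a domain is multiplication by an element of the fraction field, and finishes with the gcd computation $\gcd(m_0,\dots,m_n)=1$ in the UFD $R$. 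Your method is more conceptual and would apply verbatim to any monomial ideal whose minimal generators have trivial gcd, whereas the paper's computation is tailored to the particular relations of $I(p)$; on the other hand, the paper's argument stays entirely inside $I$ and avoids passing to the fraction field. One small remark: your unwinding produces $\psi(x)=\tau(c)\tau(x)$, so strictly speaking the constant in the conclusion is $\tau(c)\in\Bbbk$ rather than $c$ itself (your interpretation via the twisted action recovers $c$), but either way the lemma's assertion holds.
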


\begin{proof}
Note that for $i=2,\dots,n-1$ one has the relation 
$v_0^2m_0= e_{i-1,i}v_ie_{i,i+1}m_i$ in $I$. Therefore 
we get $v_0^2\psi(m_0)=e_{\tau(i-1)\tau(i)}v_{\tau(i)}e_{\tau(i)\tau(i+1)}\psi(m_i)$.
Smilarly, the relations $v_0^2m_0=e_{n,1}v_1e_{1,2}m_1$ and 
$v_0^2m_0=e_{n-1,n}v_ne_{n,1}m_n$ yield the relations 
\begin{align*}
v_0^2\psi(m_0) &=e_{\tau(n)\tau(1)}v_{\tau(1)}e_{\tau(1)\tau(2)}\psi(m_1) \\
v_0^2\psi(m_0) &=e_{\tau(n-1)\tau(n)}v_{\tau(n)}e_{\tau(n)\tau(1)}\psi(m_n).
\end{align*} 
It follows that 
$\psi(m_0)=cm_0$ for some $c\in R$. 
Therefore 
\begin{align*}
\psi(m_1)&= cv_0^2m_0/(e_{\tau(n)\tau(1)}v_{\tau(1)}e_{\tau(1)\tau(2)})
         =cm_{\tau(1)} \\ 
\psi(m_n)&= cv_0^2m_0/(e_{\tau(n-1)\tau(n)}v_{\tau(n)}e_{\tau(n)\tau(1)})
         =cm_{\tau(n)}.
\end{align*} 
Finally, for 
$2\le i\le n-1$ we have 
\[
\psi(m_i)=cv_0^2m_0/(e_{\tau(i-1)\tau(i)}v_{\tau(i)}e_{\tau(i)\tau(i+1)})
         =cm_{\tau(i)}, 
\] 
therefore $\psi=c\tau$. Since $\psi$ is an epimorphism, $c$ is a unit 
in $R$, hence $c\in\Bbbk$. 
\end{proof}

\begin{lemma}\label{L:graded-lowest-degrees} 
Let $\Bbbk$ be any field, 
let $\tau$ be any degree preserving ring automorphism of $R$,
let $M$ be any finitely generated graded $R$-module, and let 
$\gamma\: M \lra \tau_* M$ be an isomorphism of $R$-modules 
such that $\gamma^m =\id$ for some $m\ge 1$. 
For any $0\ne u\in M$ write $u^*$ for 
the nonzero homogeneous component of $u$ of lowest degree. 
Let $\{w_1,\dots, w_k\}$ be a homogeneous generating set of $M$ 
such that $|\gamma(w_i)^*| = |w_i|$ for each $i$. 

Then for each nonzero $u\in M$ and each $s\ge 1$ we have 
$|\gamma^s(u)^*| = |u^*|$.   
\end{lemma}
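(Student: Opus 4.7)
The plan is to exploit the hypothesis on the generators in order to show that the ``valuation'' $\nu(u):=|u^*|$ is non-decreasing along $\gamma$, and then to upgrade this inequality to equality using the periodicity $\gamma^m=\id$.

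The first step will be to establish the following key intermediate claim: if $v\in M$ is homogeneous of degree $d$, then every nonzero homogeneous component of $\gamma(v)$ sits in degree at least $d$. To see this I would invoke the standard fact that any homogeneous element of a graded module can be written as $v=\sum_i r_iw_i$ with each $r_i\in R$ homogeneous of degree $d-|w_i|$; applying $\gamma$ then gives $\gamma(v)=\sum_i\tau(r_i)\gamma(w_i)$, and the two observations that $\tau$ is degree-preserving (so $\tau(r_i)$ is homogeneous of degree $d-|w_i|$) and that by hypothesis all nonzero homogeneous components of $\gamma(w_i)$ live in degrees $\ge|w_i|$ together force each summand to contribute only in degrees $\ge d$.

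Next I would upgrade to arbitrary $u\in M$. Decomposing $u=\sum_d u_d$ into its homogeneous pieces and applying $\gamma$ componentwise, the intermediate claim shows that $\gamma(u_d)=0$ for $d<|u^*|$ while for $d\ge|u^*|$ the summand $\gamma(u_d)$ contributes only in degrees $\ge d\ge|u^*|$. This yields $|\gamma(u)^*|\ge|u^*|$ for every nonzero $u\in M$.

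Finally, iterating this inequality produces the chain $|u^*|\le|\gamma(u)^*|\le\cdots\le|\gamma^m(u)^*|=|u^*|$, which forces all intermediate inequalities to be equalities; this gives $|\gamma^s(u)^*|=|u^*|$ for $1\le s\le m$, and the periodicity of $\gamma$ extends the conclusion to all $s\ge 1$. The only step requiring a bit of care is the representation of a homogeneous element in terms of the generating set $\{w_1,\dots,w_k\}$ with matching-degree homogeneous coefficients, but this is a routine consequence of splitting each coefficient into its homogeneous pieces and collecting the unique piece per generator that can contribute to a given total degree.
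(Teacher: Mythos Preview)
Your proof is correct and follows essentially the same approach as the paper: establish the inequality $|\gamma(u)^*|\ge|u^*|$ by writing homogeneous elements in terms of the generators and using that $\tau$ preserves degrees, then use the periodicity $\gamma^m=\id$ to force equality. Your organization is in fact a bit more streamlined than the paper's, which first proves the equality for homogeneous $u$ (via a ``strict inequality persists'' argument) and only afterward extends to arbitrary $u$, whereas you pass directly to the chain $|u^*|\le|\gamma(u)^*|\le\cdots\le|\gamma^m(u)^*|=|u^*|$ for all $u$ at once.
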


\begin{proof} 
Note first that for every nonzero homogeneous $u$ and every 
$s\ge 1$ we have 
$|\gamma^s(u)^*|\ge |u|$. Indeed, we can write 
$u=r_1w_1 + \dots + r_kw_k$ for some homogeneous $r_i\in R$ with 
$|r_i|+|w_i|=|u|$ for each $i$. Thus 
$\gamma(u)= \sum \tau(r_i)\gamma(w_i)$ and hence 
$|\gamma(u)^*| \ge \min\{ |\tau(r_i)| + |\gamma(w_i)^*| \} = |u|$. 
Now if $|\gamma^s(u)^*|\ge |u|$ for some $s\ge 1$, then writing 
$\gamma^s(u) = \sum_{i\ge |\gamma^s(u)^*|}u_i$ for homongeneous $u_i$ with 
$|u_i|=i$, we obtain 
$|\gamma^{s+1}(u)^*|\ge \min\{|\gamma(u_i)^*|\} \ge \min\{|u_i|\} \ge |u|$, 
hence the desired inequality follows by induction.   

Next, suppose again $u$ is homogeneous.  
If $|\gamma^s(u)^*| > |u|$ for some $s\ge 1$ then, writing 
$\gamma^s(u) = \sum_{i\ge |\gamma^s(u)^*|} u_i$ for some homogeneous 
$u_i$ of $|u_i|=i$, we obtain 
$\gamma^{s+1}(u) = \sum_{i\ge |\gamma(u)^*|} \gamma(u_i)$ hence 
$
|\gamma^{s+1}(u)^*| \ge 
\min\{|\gamma(u_i)^*|\} \ge 
\min\{|u_i|\} = |\gamma^s(u)^*| > |u|.
$ 
Thus induction gives us that if $|\gamma^t(u)^*|>|u|$ for some $t\ge 1$ 
then $|\gamma^s(u)^*|> |u|$ for every $s\ge t$. This  
contradicts the fact that $\gamma^m(u)=u$. It follows that  
$|\gamma^s(u)^*| = |u|$ for every $s\ge 1$. 

Finally, suppose $u\ne 0$ is arbitrary, and write it as a sum 
of homogeneous components $u=\sum_i u_i$ with $|u_i|=i$. We get 
$\gamma^s(u) = \sum_{i\ge |u^*|} \gamma^s(u_i)$, and since 
$|\gamma^s(u_i)^*| = |u_i| = i$ we must have $|\gamma^s(u)^*| = |u^*|$ 
as desired.    
\end{proof}

\begin{lemma}\mlabel{L:lowest-degrees} 
Let $\Bbbk$ be any field, let $\tau\in D_n$, and 
let $\psi\: \cx F \lra \tau_*\cx F$ be a morphism of chain complexes 
of $R$-modules such that $\psi^m=\id$ for some $m\ge 1$. 
Then, for each nonzero element $x$ 
of $H$, $G$, and $F$, we have that $|\psi(x)^*|=|x^*|$. 
\end{lemma}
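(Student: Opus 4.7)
The plan is to apply Lemma~\ref{L:graded-lowest-degrees} separately to $M=H$, $G$, and $F$, using in each case the explicit homogeneous basis from Lemma~\ref{L:minimal-resolution} as the generating set. For each basis element $w$ it suffices to verify $|\psi(w)^*|=|w|$; the lemma then extends this equality to all nonzero elements.

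The case $M=F$ is immediate. Since $F=Rf$ is free of rank one, $\psi(f)=rf$ for some $r\in R$; iterating using $\psi(sf)=\tau(s)\psi(f)$ yields
\[
\psi^m(f)=\bigl(r\cdot\tau(r)\cdots\tau^{m-1}(r)\bigr)f=f,
\]
so this product equals $1$, forcing $r$ to be a unit in $R$ and hence a nonzero scalar. Thus $|\psi(f)^*|=|f|$.

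For $M=H$, the chain map $\psi$ descends to $\bar\psi\:I\lra\tau_*I$ on $\HH_0(\cx F)$ with $\bar\psi^m=\id_I\ne 0$, so $\bar\psi\ne 0$; Lemma~\ref{L:unique-action-ideal} then forces $\bar\psi=c\tau$ for a nonzero $c\in\Bbbk$. The chain-map condition $\phi_0\psi=\bar\psi\phi_0$ therefore yields $\phi_0(\psi(h_i))=cm_{\tau(i)}$, a nonzero homogeneous element of degree $|h_i|$. Write $\psi(h_i)=u+u'$ where $u=\psi(h_i)^*$ is the lowest-degree part and $u'$ consists of strictly higher-degree terms. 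Either $\phi_0(u)\ne 0$, in which case $\phi_0(u)$ is homogeneous of degree $|u|$ and must coincide with the lowest-degree part of $cm_{\tau(i)}$, forcing $|u|=|h_i|$; or $\phi_0(u)=0$, so $u\in\Ker\phi_0=\im\phi_1$, which by the explicit formulas in Lemma~\ref{L:minimal-resolution} is concentrated in degree $\ge 2n+1$. In the latter scenario every homogeneous component of $\psi(h_i)$ would sit in degree $\ge 2n+1$, so $\phi_0(\psi(h_i))$ would too, contradicting $cm_{\tau(i)}\ne 0$ of degree $|h_i|\le 2n$. This establishes $|\psi(h_i)^*|=|h_i|$ for every $i$, and Lemma~\ref{L:graded-lowest-degrees} then handles all of $H$.

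For $M=G$, I would first sharpen the previous step: $\Ker\phi_0$ vanishes in degrees $\le 2n$, so $\phi_0$ is injective on the degree-$|h_i|$ piece of $H$, and the identity $\phi_0(\psi(h_i)^*)=cm_{\tau(i)}$ pins down the explicit formula $\psi(h_i)^*=ch_{\tau(i)}$. Plugging these into $\phi_1\psi=\psi\phi_1$ for each basis element $g$ of $G$ and extracting the lowest-degree part produces $c$ times the $\tau$-translate of $\phi_1(g)$; this is manifestly nonzero and homogeneous of degree $|g|$, since its two summands involve distinct $h$'s (at indices permuted by $\tau\in D_n$) and cannot cancel. The same case split as for $H$ now goes through, using that $\Ker\phi_1=\im\phi_2$ is concentrated in degree $\ge 2n+2$, yielding $|\psi(g)^*|=|g|$ on every basis element of $G$, after which Lemma~\ref{L:graded-lowest-degrees} finishes $G$. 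The main delicate point throughout is tracking the precise lowest degrees in $\Ker\phi_0$ and $\Ker\phi_1$; these are where the minimality of $\cx F$ enters and are precisely what allow us to rule out the degenerate possibility $\phi_i(u)=0$ at each stage.
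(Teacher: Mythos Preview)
Your proof is correct and follows essentially the same strategy as the paper: verify $|\psi(w)^*|=|w|$ on basis elements via the chain-map identities and the degree bounds on $\ker\phi_0=\im\phi_1$ and $\ker\phi_1=\im\phi_2$, then invoke Lemma~\ref{L:graded-lowest-degrees}. The differences are minor. You treat $F$ first by a direct unit argument, whereas the paper handles $F$ last via the injectivity of $\phi_2$ after $G$ is done; and for $G$ you take the extra step of pinning down $\psi(h_i)^*=ch_{\tau(i)}$ explicitly, while the paper simply quotes the already-established equality $|\psi\phi_1(g)^*|=|\phi_1(g)|=|g|$ from the $H$ case, which is a bit slicker. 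One small wording point: for $g_0$ the bound $\ker\phi_1\subseteq$ (degrees $\ge 2n+2$) is not strictly greater than $|g_0|=2n+2$, so your ``same case split'' needs the additional remark that in the degenerate branch $\phi_1(u)=0$ kills the degree-$(2n+2)$ contribution to $\phi_1\psi(g_0)$, contradicting $|\psi\phi_1(g_0)^*|=2n+2$; this is implicit in your setup and causes no real trouble.
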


\begin{proof} 
Let $\pi\: H \lra \coker\phi_1 = \HH_0\cx F$ be the canonical 
projection, and write $\varphi\: \HH_0\cx F \lra I$ for the 
isomorphism of $R$-modules induced by $\phi_0$ so that 
$\phi_0=\varphi\pi$.  In particular, we have that  
$\varphi\: \tau_*\HH_0\cx F \lra \tau_* I$ is also an isomorphism 
of $R$-modules. 
By Lemma~\ref{L:unique-action-ideal} 
we have $\varphi\ol\psi\varphi^{-1}=c\tau$ for some $c\in\Bbbk$, 
where $\ol{\psi}$ is the isomorphism 
on homology induced by $\psi$.  
The result for $x=h_i$ with $0\le i\le n$ is now immediate from the 
fact that $\ker(\phi_0)=\im(\phi_1)$ is generated in degrees $\ge 2n+1$. 
Since the $h_i$s generate $H$, the result for the other nonzero 
elements $x$ of $H$ follows from Lemma~\ref{L:graded-lowest-degrees}. 

Next let $x=g_0$ or $x=g_{i,i+1}$ or $x=g_{n,1}$. Consider the decomposition 
$\psi(x) = \sum_{i\ge |\psi(x)^*|} u_i$ into homogeneous components with $|u_i|=i$. 
Then, since $\phi_1$ preserves degrees, 
$\phi_1\psi(x)=\sum_{i\ge |\psi(x)^*|}\phi_1(u_i)$ is a decomposition of 
$\psi\phi_1(x)$ into homogeneous components. We already have shown that 
$|\psi\phi_1(x)^*|=|\phi_1(x)| = |x|\le 2n+2$, and since 
$\ker(\phi_1)=\im(\phi_2)$ is generated in degree $2n+2$, we must have 
$u_i=0$ for $i< |x|$ and $u_{|x|}\ne 0$. Thus $|\psi(x)^*|=|x|$. The 
result for the other nonzero elements of $G$ is now immediate from 
Lemma~\ref{L:graded-lowest-degrees}.

Finally, the result for the nonzero elements of $F$ is immediate from  
the above by the injectivity of $\phi_2$.  
\end{proof}

\begin{lemma}\label{L:to-homogeneous-action} 
Let \/ $\Bbbk$ be any field, let $\tau\in D_n$,  and 
let $\psi\: \cx F \lra \tau_*\cx F$ be a morphism of chain complexes 
of $R$-modules such that $\psi^m=\id$ for some $m\ge 1$. 
Let $\widetilde\psi\: \cx F \lra \tau_*\cx F$ be the map defined on each  
basis element $x$ of $H$, $G$, and $F$ by 
$\widetilde\psi(x) = \psi(x)^*$. 

Then $\widetilde\psi$ is an isomorphism of complexes of graded $R$-modules, 
and $\widetilde\psi^m=\id$.  
\end{lemma}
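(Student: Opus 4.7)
The plan is to exploit the key fact from Lemma~\ref{L:lowest-degrees}---that $|\psi(x)^*|=|x^*|$ on all nonzero elements---together with two ``free'' pieces of homogeneity: $\tau$ is homogeneous of degree $0$ because it permutes the variables $X$, and the differentials of $\cx F$ are homogeneous of degree $0$. These combine to make the operation ``take the lowest-degree component'' behave compatibly with both $\psi$ and $\phi$.

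First I will check that $\widetilde\psi$ is a morphism of $\mathbb Z$-graded $R$-modules. By Lemma~\ref{L:lowest-degrees} the value $\widetilde\psi(x)=\psi(x)^*$ on a basis element $x$ is homogeneous of degree $|x|$, and then the $R$-linear extension (via $\widetilde\psi(rx)=\tau(r)\widetilde\psi(x)$) preserves $\mathbb Z$-degree. To show $\widetilde\psi$ commutes with the differential, I will take a basis element $x$, decompose $\psi(x)=\sum_{j\ge |x|}u_j$ into its homogeneous components, and write $\phi(x)=\sum r_iy_i$ with $y_i$ basis elements and $|r_iy_i|=|x|$. Extracting degree-$|x|$ components from both sides of $\phi\psi(x)=\psi\phi(x)$ gives
\[
\phi\widetilde\psi(x)=\phi(u_{|x|})=\sum\tau(r_i)\psi(y_i)^*=\sum\tau(r_i)\widetilde\psi(y_i)=\widetilde\psi\phi(x),
\]
where the middle equality uses Lemma~\ref{L:lowest-degrees} applied to each $\psi(y_i)$.

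The main step is to establish $\widetilde\psi^m=\id$. I will prove by induction on $s\ge 1$ that $\widetilde\psi^s(x)=\psi^s(x)^*$ for every basis element $x$; taking $s=m$ and using $\psi^m=\id$ together with $x^*=x$ for homogeneous $x$ will then finish, as $\widetilde\psi$ is automatically an isomorphism with inverse $\widetilde\psi^{m-1}$ once $\widetilde\psi^m=\id$ is known. The base case is the definition. For the inductive step, I will write $\psi^s(x)^*=\sum r_iy_i$ as a homogeneous expansion in basis elements (with $|r_iy_i|=|x|$) and compute $\widetilde\psi^{s+1}(x)=\widetilde\psi(\psi^s(x)^*)=\sum\tau(r_i)\psi(y_i)^*$ directly from the definition. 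Comparing with $\psi^{s+1}(x)=\psi(\psi^s(x))$ expanded via the homogeneous decomposition $\psi^s(x)=\sum_{j\ge |x|}v_j$, Lemma~\ref{L:lowest-degrees} forces every $\psi(v_j)$ with $j>|x|$ to contribute only in degrees strictly above $|x|$, so the degree-$|x|$ component of $\psi^{s+1}(x)$ coincides with the degree-$|x|$ component of $\psi(v_{|x|})=\psi(\psi^s(x)^*)$, which is exactly $\sum\tau(r_i)\psi(y_i)^*$. Another application of Lemma~\ref{L:lowest-degrees}, now to $\psi^{s+1}$ (which satisfies $(\psi^{s+1})^m=\id$ because $\psi^m=\id$), identifies this degree-$|x|$ component with the lowest-degree component $\psi^{s+1}(x)^*$, closing the induction.

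The principal obstacle is purely the careful bookkeeping of homogeneous components in the inductive step---tracking which terms of $\psi^s(x)$ survive to the lowest degree after one more application of $\psi$. All the substantive input is already packaged into Lemma~\ref{L:lowest-degrees} and the degree-$0$ homogeneity of $\tau$ and $\phi$; the rest is a direct degree-counting argument.
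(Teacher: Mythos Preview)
Your proof is correct and follows essentially the same approach as the paper's: both arguments verify the chain-map property by extracting the degree-$|x|$ component from $\phi\psi(x)=\psi\phi(x)$, and both establish $\widetilde\psi^m=\id$ by showing (in effect) that $\widetilde\psi^s(x)=\psi^s(x)^*$ for basis elements $x$ via the same degree-counting. The only organizational difference is that you run an explicit induction on $s$ and re-apply Lemma~\ref{L:lowest-degrees} to $\psi^{s+1}$ (noting $(\psi^{s+1})^m=\id$ and $\tau^{s+1}\in D_n$), whereas the paper first records the one-step identity $\psi^{s+1}(y)^*=\psi^s[\psi(y)^*]^*$ for all nonzero $y$ (using Lemma~\ref{L:graded-lowest-degrees} together with Lemma~\ref{L:lowest-degrees}) and then iterates it; the content is identical.
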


\begin{proof} 
Let $x$ be a basis element of $H$, $G$, or $F$. Then $\phi(x)$ is 
a nonzero homogeneous element of $\cx F$ of same degree as $x$. We 
know $\psi[\phi(x)]=\phi[\psi(x)]$, and by Lemma~\ref{L:lowest-degrees}
we also have the decomposition 
$\psi(x)=\sum_{i\ge |x|}u_i$ 
into homogenous components. This shows that $\sum_{i\ge |x|}\phi(u_i)$ is 
a decomposition of $\psi[\phi(x)]$ into homogeneous components. Since we 
know from Lemma~\ref{L:lowest-degrees} that 
$|\psi[\phi(x)]^*|=|\phi(x)|=|x|$, 
we get 
$
\widetilde\psi[\phi(x)]=\psi[\phi(x)]^*=\phi(u_{|x|})=
\phi[\psi(x)^*]=\phi[\widetilde\psi(x)]$. 

Next, note that by Lemma~\ref{L:graded-lowest-degrees} and 
Lemma~\ref{L:lowest-degrees} we have $|\psi^s(x)^*|=|x^*|$ for every 
nonzero element $x$ of $\cx F$ and every $s\ge 1$. Therefore 
$\psi^{s+1}(x)^*=\psi^s[\psi(x)^*]^*$ for every $s\ge 1$ and every nonzero 
$x$. In particular, for every homogeneous basis 
element $x$ we get $\psi^s(x)^*=\psi^{s-1}[\widetilde\psi(x)]^*$. 
Iterating this we obtain  
$
x=\psi^n(x)=\psi^n(x)^*=\psi^{n-1}[\widetilde\psi(x)]^* =    
  \psi^{n-2}[(\widetilde\psi)^2(x)]^* 
= \dots = \psi[\widetilde\psi^{n-1}(x)]^* = \widetilde\psi^n(x)
$.  
\end{proof}

\begin{lemma}\label{L:homogeneous-action-formulas}
Let $\Bbbk$ be an algebraic extension of\/ $\mathbb F_p$, and 
let $\psi\: \cx F \lra \rho_*\cx F$ be a morphism of chain complexes 
of graded $R$-modules such that $\psi^n=\id$.  
Then the diagram 
\begin{equation}\label{E:unique-action-complex} 
\begin{CD}
H       @> \psi_0 >> \rho_* H \\  
@V \phi_0 VV              @VV \phi_0 V    \\ 
I       @>> \rho > \rho_* I 
\end{CD}
\end{equation}
is commutative, 
and  
we have 
\begin{equation}\label{E:action-formulas}
\begin{split} 
\psi(h_i) &= h_{\rho(i)} \quad\text{for } 0\le i\le n; \\ 
\psi(g_{i,j})&=g_{\rho(i),\rho(j)}  \\ 
\psi(g_0) &= g_0 + a_1g_{1,2} + \dots + a_ng_{n,1}; \\  
\psi(f)&=f
\end{split}
\end{equation}
where the $a_i$ are homogeneous elements of $R$ of degree $1$. 
\end{lemma}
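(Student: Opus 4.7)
The plan is to first reduce the whole statement to two facts: (1) that the diagram \eqref{E:unique-action-complex} commutes, and (2) that each basis element of $\cx F$ is uniquely determined by degree considerations combined with the chain-map relation $\phi\psi=\psi\phi$ and the fact that $\ker\phi_{j}=\im\phi_{j+1}$ is generated in strictly higher degree than any basis element of the previous free module of low degree. The main obstacle is the first fact, which is where the characteristic-$p$ hypothesis and the choice of $n(p)$ as a power of $p$ are essential.

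For commutativity of \eqref{E:unique-action-complex}, let $\bar\psi\: \HH_0\cx F \lra \rho_*\HH_0\cx F$ be the isomorphism induced by $\psi$ on homology, and let $\varphi\: \HH_0\cx F \lra I$ be the isomorphism induced by $\phi_0$. By Lemma~\ref{L:unique-action-ideal} there exists $c\in\Bbbk$ with $\varphi\bar\psi\varphi^{-1}=c\rho$. Viewing both sides as abelian-group endomorphisms of $I$ and iterating $n$ times (using $\rho^n=\id$) gives $(c\rho)^n=c^n\cdot\id$; but the hypothesis $\psi^n=\id$ forces $\bar\psi^n=\id$, so $c^n=1$. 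Since by construction $n=n(p)$ is a power of $p$, the polynomial $x^n-1=(x-1)^n$ has $1$ as its only root in $\Bbbk$, whence $c=1$, and commutativity follows.

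For the formulas in \eqref{E:action-formulas}, I proceed module-by-module. Each $h_i$ with $i\ge 1$ is homogeneous of degree $2n-1$, and the degree $2n-1$ part of $H$ is $\bigoplus_{j\ge 1}\Bbbk h_j$, so $\psi(h_i)=\sum_j c_j h_j$ with $c_j\in\Bbbk$; applying $\phi_0$ and using commutativity gives $\sum_j c_j m_j=\rho(m_i)=m_{\rho(i)}$, and since distinct monomials in $R$ are $\Bbbk$-linearly independent we get $\psi(h_i)=h_{\rho(i)}$. The case of $h_0$ is analogous since the degree $2n$ part of $H$ is just $\Bbbk h_0$. For $g_{i,j}\in\{g_{1,2},\dots,g_{n,1}\}$, a short calculation from \eqref{E:differentials} and the formulas already established for $\psi(h_k)$ shows $\psi\phi_1(g_{i,j})=\phi_1(g_{\rho(i),\rho(j)})$, so $\psi(g_{i,j})-g_{\rho(i),\rho(j)}\in\ker\phi_1=\im\phi_2$; but this difference is homogeneous of degree $2n+1$ while $\im\phi_2$ is generated in degree $2n+2$, so it must vanish.

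For $g_0$ of degree $2n+2$, the degree $2n+2$ part of $G$ is $\Bbbk g_0\oplus R_1\cdot\langle g_{1,2},\dots,g_{n,1}\rangle_\Bbbk$, where $R_1$ denotes the degree $1$ component of $R$, so we may write $\psi(g_0)=c g_0+\sum b_k g_{k,k+1}+b_n g_{n,1}$ with $c\in\Bbbk$ and $b_k\in R_1$. Among the generators of $G$, only $g_0$ has an image under $\phi_1$ containing $h_0$, and that term is $v_0^2 h_0$; comparing the coefficient of $h_0$ in $\phi_1\psi(g_0)=\psi\phi_1(g_0)=v_0^2 h_0-\rho(e_{n,1}v_1e_{1,2})h_{\rho(1)}$ yields $cv_0^2=v_0^2$, hence $c=1$, which gives the desired form with each $a_k\in R_1$. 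Finally, the degree $2n+2$ component of $F$ is $\Bbbk f$, so $\psi(f)=cf$, and comparing $\phi_2\psi(f)=c\phi_2(f)$ with $\psi\phi_2(f)=\sum_k\rho(e_{k,k+1})g_{\rho(k),\rho(k+1)}=\phi_2(f)$ (after reindexing over the cyclic action of $\rho$) gives $c=1$, completing the proof.
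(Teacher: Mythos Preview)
Your approach is essentially the paper's, and the argument is mostly sound, but there are two slips worth flagging.

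First, in deriving $c=1$ for the commutativity of \eqref{E:unique-action-complex}, you write $(c\rho)^n=c^n\cdot\id$. As abelian-group endomorphisms one actually gets $(c\rho)^n = c\,\rho(c)\cdots\rho^{n-1}(c)\cdot\rho^n$, so you need to know that $\rho$ fixes $c\in\Bbbk$. This is exactly what Remark~\ref{R:trivial-action}(b) supplies (using that $\Bbbk$ is algebraic over $\mathbb F_p$ and the dihedral structure of $D_n$), and the paper invokes it explicitly at this point; you should too.

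Second, your claim that ``the degree $2n$ part of $H$ is just $\Bbbk h_0$'' is false: that graded piece also contains $R_1\cdot h_j$ for $j\ge 1$. The conclusion $\psi(h_0)=h_0$ is still correct, but the clean justification is that $\ker\phi_0=\im\phi_1$ is generated in degrees $\ge 2n+1$, so $\phi_0$ is injective in degree $2n$; combined with $\phi_0\psi(h_0)=\rho(m_0)=m_0=\phi_0(h_0)$ this gives the result. This is the route the paper takes.

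One place where you diverge, to your advantage: for the coefficient $c$ in $\psi(g_0)=cg_0+\cdots$, the paper re-uses the trick $\psi^n=\id$ together with triviality of the $\rho$-action on $\Bbbk$ to force $c^n=1$ and hence $c=1$. Your alternative, reading off the $h_0$-coefficient in $\phi_1\psi(g_0)=\psi\phi_1(g_0)$ to get $cv_0^2=v_0^2$, is more direct and avoids a second appeal to Remark~\ref{R:trivial-action}(b).
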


\begin{proof} 
Let $\pi$ and $\varphi$ be as in the proof of 
Lemma~\ref{L:lowest-degrees}. 
We obtain therefore an isomorphism of $R$-modules 
$\widehat\psi=\varphi\ol\psi\varphi^{-1}\: I \lra \rho_* I$,    
where $\ol{\psi}$ is the isomorphism 
on homology induced by $\psi$. 
Thus $\widehat\psi = c\rho$ for some $c\in\Bbbk$ by \
Lemma~\ref{L:unique-action-ideal}. 
Since by Remark~\ref{R:trivial-action}(b)
$\rho$ acts trivially on $\Bbbk$,
this yields
$
\id = \widehat{\psi}^n = c^n\rho^n = c^n\id, 
$
hence $c^n=1$ and, as $n$ is a power of $p$, we get $c=1$.  
It follows that \eqref{E:unique-action-complex} commutes 
as desired.  

Next, since $\psi$ induces an isomorphism on the homogeneous components 
of $H$ of degrees $2n-1$ and $2n$, and since $\phi_0$ is injective on these 
components, the desired equalities $\psi(h_i)=h_{\rho(i)}$ for $i=0,\dots,n$ 
are immediate from the commutativity of \eqref{E:unique-action-complex}. 
Since $\phi_1\psi(g_{i,j}) = \psi\phi_1(g_{i,j})=\phi_1(g_{\rho(i),\rho(j)})$, 
the injectivity of $\phi_1$ in degree $2n+1$ yields that 
$\psi(g_{i,j})=g_{\rho(i),\rho(j)}$ as desired. Now we have 
$\phi_2\psi(f)=\psi\phi_2(f)=\phi_2(f)$, therefore  
$\psi(f)=f$ by injectivity of $\phi_2$.  

Finally, since $\psi$ preseves degrees, we must have 
$\psi(g_0)=cg_0 + h'$ where $h'=a_1g_{1,2}+\dots +a_ng_{n,1}$ 
for some homogeneous 
$a_i\in R$ of degree $1$ and some constant $c\in \Bbbk$. 
Since by Remark~\ref{R:trivial-action}(b)
$\rho$ acts trivially on $\Bbbk$, it follows that 
$g_0 = \psi^n(g_0) = c^ng_0 + \sum_{k=1}^nc^{n-k}\psi^{k-1}(h')$. By what we 
already proved we know that each $\psi^{k-1}(h')$ 
is again a homogeneous linear combination of the $g_{i,j}$s,  
and therefore $c^n=1$ hence $c=1$ as $n$ is a power of $p$. 
\end{proof}

\begin{proof}[Proof of Theorem~\ref{T:counterexample}]
It suffices to show that if $\Bbbk$ is an algebraic field extension 
of $\mathbb F_p$ then 
the action on $I$ of the cyclic group $C_n$ generated by 
$\rho$ in $D_n$ cannot be lifted to an action on $\cx F$. Indeed, 
suppose we have an isomorphism $\psi\: \cx F\lra \rho_*\cx F$ 
such that $\psi^n=\id$. Then by 
Lemma~\ref{L:to-homogeneous-action} and 
Lemma~\ref{L:homogeneous-action-formulas}, we may assume that 
$\psi$ acts via the equations \eqref{E:action-formulas}. 
Since by Remark~\ref{R:trivial-action}(b)
$\rho$ acts trivially on $\Bbbk$, going modulo the 
augmentation ideal $J$ in $R$ generated by the set $\{y-1 \mid y\in X \}$, 
we obtain an action of the group ring $\Bbbk[C_n]$ on the chain 
complex of $\Bbbk$-vector spaces $\overline{\cx F}= \cx F/J\cx F$.  
We see that $\Bbbk[C_n]\bar\phi_1(\bar g_0)$ is $n$-dimensional 
over $\Bbbk$, hence is a free $\Bbbk[C_n]$-module, therefore 
$\Bbbk[C_n]\bar g_0$ is also a free $\Bbbk[C_n]$-module, in particular 
the element $w=(1 + \rho + \dots + \rho^{n-1})\bar g_0$ 
must be a non-zero 
$C_n$-invariant element of the $\Bbbk[C_n]$-submodule $G'$ of $\overline G$ 
generated over $\Bbbk$ by the elements $\bar g_{i,j}$. Since the 
only invariants of $G'$ are the $\Bbbk$-multiples of $\bar\phi_2(\bar f)$, 
we see that $w$ is a non-zero element in $\Ker(\phi_1)$, hence 
$\Bbbk[C_n]\phi_1(\bar g_0)$ cannot be a free $\Bbbk[C_n]$-module, 
a contradiction.   
\end{proof}

\end{document}